\DeclareMathOperator{\im}{im}						
\newcommand{\id}{\textrm{id}}						
\newcommand{\cech}{\textrm{\v Cech}}				
\newcommand{\R}{\mathbb{R}}							
\newcommand{\E}{\mathbb{E}}							
\newcommand{\Z}{\mathbb{Z}}							
\newcommand{\N}{\mathbb{N}}							
\renewcommand{\P}{\mathbb{P}}						
\renewcommand{\dim}{\mathbbm{d}}					
\newcommand{\W}{\mathcal{W}_{0:\dim-1}}				
\newcommand{\K}{\mathcal{K}}						
\newcommand{\one}{\mathbbm{1}}						
\newcommand{\goto}{\rightarrow}						
\newcommand{\del}[1]{\partial_{#1}}					
\newcommand{\LP}{\left(} \newcommand{\RP}{\right)}	
\newcommand{\LC}{\left\{} \newcommand{\RC}{\right\}}
\newcommand{\LN}{\left\|} \newcommand{\RN}{\right\|}
\newcommand{\Ln}{\left|} \newcommand{\Rn}{\right|}	
\newcommand{\LB}{\left[} \newcommand{\RB}{\right]}	
\newtheorem{remark}{Remark}[section]
\newtheorem{example}{Example}
\newtheorem{defn}{Definition}[section]
\newtheorem{theorem}{Theorem}
\newtheorem{lemma}{Lemma}[section]
\newtheorem{prop}[lemma]{Proposition}
\newtheorem{cor}[lemma]{Corollary}
\numberwithin{equation}{section}
\begin{document}
\title{Nonparametric Estimation of Probability Density Functions of Random Persistence Diagrams}

\author{
	Mike, Joshua Lee \\
	\and
	Maroulas, Vasileios
		\thanks{Corresponding Author}
		\thanks{Research has been supported by the Army Research Office (ARO) Grant \# W911NF-17-1-0313.}}
\date{University of Tennessee, Knoxville}

\maketitle

\begin{abstract}
We introduce a nonparametric way to estimate the global probability density function for a random persistence diagram.
Precisely, a kernel density function centered at a given persistence diagram and a given bandwidth is constructed.
Our approach encapsulates the number of topological features and considers the appearance or disappearance of features near the diagonal in a stable fashion.
In particular, the structure of our kernel individually tracks long persistence features, while considering features near the diagonal as a collective unit.
The choice to describe short persistence features as a group reduces computation time while simultaneously retaining accuracy.
Indeed, we prove that the associated kernel density estimate converges to the true distribution as the number of persistence diagrams increases and the bandwidth shrinks accordingly.
We also establish the convergence of the mean absolute deviation estimate, defined according to the bottleneck metric. 
Lastly, examples of kernel density estimation are presented for typical underlying datasets. 
\end{abstract}



\section{Introduction}
Topological data analysis (TDA) encapsulates a range of data analysis methods which investigate the topological structure of a dataset \citep{CompyTopo}. 
One such method, persistent homology, describes the geometric structure of a given dataset and summarizes this information as a persistence diagram.
TDA, and in particular persistence diagrams, have been employed in several studies with topics ranging from classification and clustering \citep{tda_action, tda_number, tda_clustering2015,MaMa16-2} to the analysis of dynamical systems \citep{tda_windows,VM_CellMotion,tda_signal,tda_timeseries} and complex systems such as sensor networks \citep{persissensor,fullerene,persis_brain}.
In this work, we establish the probability density function (pdf) for a random persistence diagram.

Persistence diagrams offer a topological summary for a collection of ${\dim}$-dimensional data, say $\LC x_i \RC \subset \R^{\dim}$, which focuses on their global geometric structure of the data.
A persistence diagram is a multiset of homological features $\LC (b_i,d_i,k_i) \RC$, each representing a $k_i$-dimensional hole which appears at scale $b_i \in \R^+$ and is filled at scale $d_i \in (b_i, \infty)$. 
In general, the dataset arises from any metric space, though restricting to $\LC x_i \RC \subset \R^{\dim}$ guarantees $k_i \in  \LC 0,..., \dim-1 \RC$. 
For example, if the data form a time series trajectory $x_i = f(t_i)$, the associated persistence diagram describes multistability through a corresponding number of persistent 0-dimensional features or periodicity through a single persistent 1-dimensional feature.
In a typical persistence diagram, few features exhibit long persistence (range of scales $d_i - b_i$), and such features describe important topological characteristics of the underlying dataset. 
Moreover, persistent features are stable under perturbation of the underlying dataset \citep{tda_stability}. 

Persistence diagrams have recently seen intense active research, including significant successful effort toward facilitating previously challenging computations with them;
these efforts impact evaluation of Wasserstein distance in \citep{geomhelps} and the creation of persistence diagrams with packages such as Dionysus \citep{tdaR} and Ripser \citep{ripser} which take advantage of certain properties of simplicial complexes \citep{persistwist}.
Recently, various approaches have defined specific summary statistics such as center and variance \citep{tda_kernels,Wass_Structure,Frechet_Computation, MaMa16-2}, birth and death estimates \citep{tda_parametric}, and confidence sets \citep{tda_confidence}.
Here we introduce a nonparametric method to construct density functions for a distribution of persistence diagrams. 
The development of these densities offers a consistent framework to understand the above summary statistic results through a single viewpoint.

We naturally think of a (random) persistence diagram as a random element which depends upon a stochastic procedure which is used to generate the underlying dataset that it summarizes.
Given that geometric complexes are the typical paradigms for application of persistent homology to data analysis, see for example the partial list \citep{persissensor, tda_parametric, tda_signal, MaMa16, tda_windows, tda_timeseries, fullerene, tda_action, tda_images, tda_wheeze}), we consider persistence diagrams which arise from a dataset and its associated $\cech$ filtration.
Thus, sample datasets yield sample persistence diagrams without direct access to the distribution of persistence diagrams.
In this sense, a distribution of persistence diagrams is defined by transforming the distribution of underlying data under the process used to create a 
persistence diagram, as discussed in \citep{Wass_Structure}.
The persistence diagrams are created through a complex and nonlinear process which relies on the global arrangement of datapoints (see Section \ref{sect:TDA}); thus, the structure of a persistence diagram distribution remains unclear even for underlying data with a well-understood distribution. 
Indeed, known results for the persistent homology of noise alone, such as \citep{tda_crackle}, primarily concern the asymptotics of feature cardinality at coarse scale.
With little previous knowledge, we study these distributions through nonparametric means. 
Kernel density estimation is a well known nonparametric technique for random vectors in $\R^{\dim}$ \citep{KDE_book};
however, persistence diagrams lack a vector space structure and thus these techniques cannot be applied directly here.

There has been extensive work to devise various maps from persistence diagrams into Hilbert spaces, especially Reproducing Kernel Hilbert Spaces (RKHS). 
For example, \citep{PersistImages} discretizes persistence diagrams via bins, yielding vectors in a high dimensional Euclidean space. 
The works \citep{MSKernel} and \citep{PWGK_TDA} define kernels between persistence diagrams in a RKHS. 
By mapping into a Hilbert space, these studies allow the application of machine learning methods such as principal component analysis, random forest, support vector machine, and more. 
The universality of such a kernel is investigated in \citep{Stat_TDA}; this property induces a metric on distributions of persistence diagrams (by comparing means in the RKHS), as \citep{Stat_TDA} demonstrates with a two-sample hypothesis test.
In a similar vein, \citep{PD_rep} utilizes Gibbs distributions in order to replicate similar persistence diagrams, e.g. for use in MCMC type sampling.

All previous approaches kernelize to map into a Hilbert space for typical statistical learning techniques.
In a similar vein, the studies \citep{tda_kernels} and \citep{tda_confidence} work with kernel density estimation on the underlying data to estimate a target diagram as the number of underlying datapoints goes to infinity.
In both cases, the target diagram is directly associated to the probability density function (pdf) of the underlying data (via the superlevel sets of the pdf). 
The first work constructs an estimator for the target diagram, while the second defines a confidence set. 
In either case, kernel density estimation is used to approximate the pdf of the underlying datapoints, assuming the data are independent and identically distributed (i.i.d.).
In contrast, our work considers a different kind of kernel density which directly estimates probability densities for a random persistence diagram from a sample of persistence diagrams.
This kernel density estimate converges to the true probability density as the number of persistence diagrams goes to infinity.

Instead of a transformed collection or a center diagram, the output of our method is an estimate of a probability density function (pdf) of a random persistence diagram. 
Access to a pdf facilitates definition and application of many statistical techniques, including hypothesis testing, utilization of Bayesian priors, or likelihood methods. 
The proposed kernel density is centered at a persistence diagram and describes each feature as having either short or long persistence;
by treating each long-persistence point individually and short persistence points collectively, the kernel density strikes a careful balance between accuracy and computation time.
Our method also enables expedient sampling of new persistence diagrams from the kernel density estimate. 
In contrast to previous methodologies, our kernel density estimate has the potential to describe high probability features in a random persistence diagram, even if these features have \emph{brief} persistence.
Such features are typically indicative of the geometric structure, e.g., curvature, of the dataset rather than its topology.

The homological features $(b_i,d_i,k_i)$ in a persistence diagram come without an ordering and their cardinality is variable, being bounded but not defined by the cardinality of the underlying dataset. 
Thus, any notion of density must be (i) invariant to the ordering of features and (ii) account for variability in their cardinality. 
Indeed, the approach used to analyze a collection of persistence diagrams in \citep{persis_brain} is a good step toward understanding a random persistence diagram, but requires a choice of order and considers only a fixed number of features and is therefore unsuitable for creating probability densities. 
In this work, we offer a kernel density with the desirable properties (i) and (ii), which also calls attention to the persistence of each feature.
A typical persistence diagram has many features with brief persistence and few with moderate or longer persistence; consequently, our kernel density groups features with short persistence together in order to combat the curse of dimensionality.
Indeed, the kernel density still considers features of short persistence, but simplifies their treatment in order to facilitate computation. 
The kernel density is defined on a pertinent space of finite random sets which is equipped to describe pdfs for random persistence diagrams generated from associated data with bounded cardinality of topological features.
In this sense, our kernel density provides estimation of the distribution of persistence diagrams which in turn describes the geometry of the random underlying dataset. 
The requirement of bounded feature cardinality is trivially satisfied for datasets with bounded cardinality, which is reasonable for application and theory.
Indeed, the creation of a persistence diagram from an infinite collection of data is often nonsensical (e.g., for anything with unbounded noise), and a scaling limit should be considered instead.

We establish the kernel density estimation problem through the lens of finite set statistics and we consequently begin with relevant backgrounds in topological data analysis in Section \ref{sect:TDA} and finite set statistics in Section \ref{sect:RPDs}. 
For further details about these two subjects, the reader may refer respectively to \citep{CompyTopo} and \citep{Matheron}.
Our results are presented in Section \ref{sect:KDE}.
In Subsection \ref{subsect:KDE_construction}, we construct the kernel density associated to a center persistence diagram and kernel bandwidth parameter.
This consists of decomposing the center persistence diagram into lower and upper halves, finding pdfs associated to each half, and lastly determining the pdf for their union.
After the kernel density is defined and an explicit pdf is delivered in Thm. \ref{thm_construction}, its convergence is presented in Theorem \ref{thm_KDE}.
Next, Subsection \ref{subsect:Examples} presents in detail a specific example of the kernel density.
Additionally, an example of persistence diagram kernel density estimation and its convergence are demonstrated for persistence diagrams associated to underlying data with annular distribution.
In Subsection \ref{subsect:KDE_MoD}, we define the mean absolute deviation (MAD) as a measure of dispersion, and present the convergence of its kernel density estimator (Thm. \ref{thm_moment}). 
Finally, we end with conclusions and discussion in Section \ref{sect:discussion}.
Further examples of KDE convergence and the proofs of the main theorems, Thm. \ref{thm_KDE} and Thm. \ref{thm_moment}, are given in the supplementary materials.

\section{Topological Data Analysis Background} \label{sect:TDA}
The topological background discussed here builds toward the definition of persistence diagrams, the pertinent objects in this work.
We begin by briefly discussing simplicial complexes and homology, an algebraic descriptor for coarse shape in topological spaces.
In turn, persistent homology, and its summary, persistence diagrams, are techniques for bringing the power and convenience of homology to describe subspace filtrations of topological spaces.
We first consider topological spaces of discernible dimension, called manifolds.

\begin{defn}
A topological space $X$ is called a $k$-dimensional manifold if every point $x \in X$ has a neighborhood which is homeomorphic to an open neighborhood in $k$-dimensional Euclidean space.
\end{defn}

We generalize the fixed-dimension notion of a manifold in order to define simplicial homology for simplicial complexes.
We then discuss the $\cech$ construction which is used to associate simplicial complexes to datasets. 

\begin{defn} \label{simplex}
A $k$-simplex is a collection of $k+1$ linearly independent vertices along with all convex combinations of these vertices:
\begin{equation} \label{convex_combo}
\LP v_0,...,v_k \RP = \LC \sum_{i=0}^k \alpha_i v_i : \sum_{i=0}^k \alpha_i = 1 \textrm{ and } \alpha_i \geq 0 \, \forall i \RC.
\end{equation}
Topologically, a $k$-simplex is treated as a $k$-dimensional manifold (with boundary).
An oriented simplex is typically described by a list of its vertices, such as $\LP v_0, v_1, v_2 \RP$.
The faces of a simplex consist of all the simplices built from a subset of its vertex set;
for example, the edge $(v_1, v_2)$ and vertex $(v_2)$ are both faces of the triangle $\LP v_0, v_1, v_2 \RP$. 
\end{defn}

\begin{defn} \label{simplicial_complex}
A simplicial complex $\K$ is a collection of simplices wherein \\
(i) if $\sigma \in \K$, then all its faces are also in $\K$, and \\
(ii) the intersection of any pair of simplices in $\K$ is another simplex in $\K$. \\
We denote the collection of $k$-simplices within $\K$ by $\K^{[k]}$. 
\end{defn}
A simplicial complex is realized by the union of all its simplices; an example is shown in Fig. \ref{simp_comp}.
Conditions (i) and (ii) in Defn. \ref{simplicial_complex} establish a unique topology on the realization of a simplicial complex which restricts to the subspace topology on each open simplex.
For finite simplicial complexes realized in $\R^{\dim}$, this topology is also consistent with the Euclidean subspace topology.

\begin{figure}[h]
\centering\includegraphics[scale=0.3]{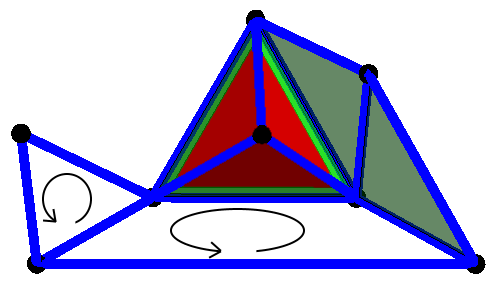}
\caption{An example of a simplicial complex realized in $\R^3$. 
This particular complex has one connected component and two cycles, which generate the 0-homology and 1-homology groups, respectively.
The other homology groups are trivial. }
\label{simp_comp}
\end{figure}

Here we define the homology groups for a simplicial complex through purely combinatorial means, which allows for automated computation. 
\begin{defn} \label{defn_chain_group}
The chain group (over $\Z$) on a simplicial complex $\K$ of dimension $k$ is denoted by $C_k(\K)$ and is defined as formal sums of $k$-simplices in $\K$:
\begin{equation} \label{eqn_chain_group}
C_k(\K) = \LC \sum_{\sigma \in \K^{[k]}} n_\sigma \sigma : n_\sigma \in \Z \RC.
\end{equation}
\end{defn}

\begin{defn} \label{defn_boundary_map}
The $k$-th boundary map is a homomorphism $\del{k} : C_k(\K) \goto C_{k-1}(\K)$ defined on each simplex as an alternating sum over the faces of one dimension less:
\begin{equation} \label{eqn_boundary_map}
\del{k}(v_0,...,v_k) = \sum_{n=0}^k (-1)^n (v_0,...,v_{n-1},v_{n+1},...,v_k).
\end{equation}
\end{defn}

\begin{remark}
Chain groups give an algebraic way to describe subsets of simplices as a formal sum.
Toward this viewpoint, the chain group is often defined over $\Z_2 = \LC 0 , 1 \RC$ instead of $\Z$.
In this case, the boundary maps can be understood classically; e.g., the boundary of a triangle yields (the sum of) its three edges and the boundary of an edge yields (the sum of) its endpoints. 
When viewed over $\Z$, the presence of sign specifies simplex orientation. 
\end{remark}

Putting chain groups of every dimension together along with the boundary maps successively defined between them, we obtain a chain complex:
\begin{equation} \label{eqn_chain_complex}
\LC 0 \RC \xleftarrow{\bm 0} C_0(\K) \xleftarrow{\del{1}} C_1(\K) \xleftarrow{\del{2}} C_2(\K) \xleftarrow{\del{3}} ... 
\end{equation}
The composition of subsequent boundary maps yields the trivial map \citep{CompyTopo};
this property is typically rephrased as $\im(\del{k+1}) \subset \ker(\del{k})$ which enables definition of the following modular groups.
\begin{defn} \label{defn_homology_group}
The homology group of dimension $k$ is given by
\begin{equation} \label{eqn_homology_group}
H_k(\K) = \ker(\del{k}) / \im(\del{k+1}) = \LC [x] = x + \im(\del{k+1}) : x \in \ker(\del{k}) \RC\!,
\end{equation}
where $[x] = \LC x + y : y \in \im(\del{k+1}) \RC$ defines the coset equivalence class of $x$.
\end{defn}
The generators of the homology group correspond to topological features of the complex $\K$;
for example, generators for the $0$-homology group correspond to connected components, generators of $1$-homology group correspond to holes in $\K$, etc. 
The interpretation of these features is exemplified by taking the topological boundary of a $k+1$ ball (that is, a $k$-sphere);
for example, the boundary of an interval is two (disconnected) points while the boundary of a disc is a loop. 

We wish to extend the notion of homology for a discrete set of data $\bm x = \LC x_i \RC_{i=1}^N$ within a metric space $(X,d_X)$.
Treating the set itself as a simplicial complex, its homology yields only the cardinality of the data points.
So, we utilize the metric to obtain more information.
Here we denote by $B(x_0,r_0)$ a metric ball centered at $x_0$ of radius $r_0$. 
Fix a radius $r > 0$ and consider the collection of neighborhoods $U = \LC U_i \RC = \LC B(x_i,r) \RC$ along with its union $\mathcal{U}_r = \cup_i B(x_i,r)$.
The filtration of sets $\LC \mathcal{U}_r \RC_{r \in \R^+}$ naturally yields information about the arrangement within $X$ of the dataset $\bm x$ at various scales. 
To make homology computations more tractable for $\mathcal{U}_r$, we instead consider the associated nerve complexes.
\begin{defn} \label{defn_nerve_and_cech}
The nerve $\mathcal{N}(U)$ of a collection of open sets $U$ is the simplicial complex where a $k$-simplex $\LP v_{i_0},..., v_{i_k} \RP$ is in $\mathcal{N}(U)$ if and only if $\cap_{j=0}^k U_{i_j} \neq \emptyset$. 
The nerve of the neighborhoods $U = \LC B(x_i,r) \RC$ is called the $\cech$ complex on the data $\LC x_i \RC$ at radius $r$ and is denoted by $\textrm{\v Cech}(\bm x, r)$. 
\end{defn}

Examples of the $\cech$ complex for the same data at different radii are depicted in Fig. \ref{grow_cech}, where they are superimposed with the associated neighborhood space.
Any nerve complex trivially satisfies the requirements for a simplicial complex \citep{CompyTopo}.
Moreover, the nerve theorem states that the nerve and union of a collection of convex sets have similar topology (they are homotopy equivalent) \citep{Hatcher};
specifically, the $\cech$ complex and neighborhood space $\mathcal{U}$ have identical homology for any given radius. 

\begin{figure}
\begin{center}
\includegraphics[scale=0.3]{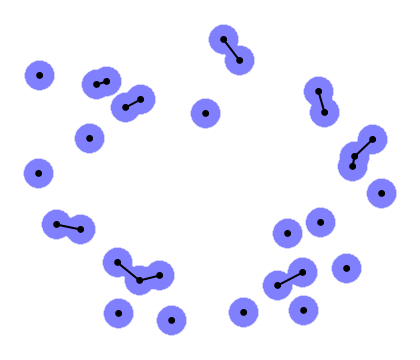} \quad 
\includegraphics[scale=0.3]{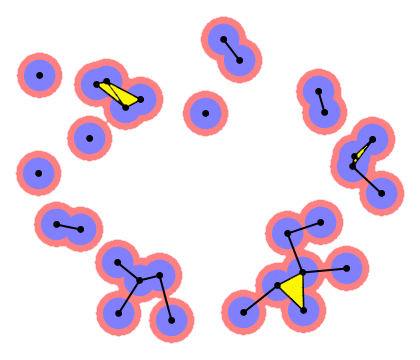} \quad 
\includegraphics[scale=0.3]{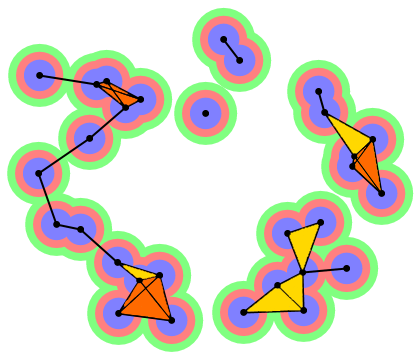}
\end{center}
\caption{The neighborhood space and $\cech$ complex of matching radius plotted at three different radii. Yellow indicates a triangle while orange indicates a tetrahedron. This family of simplicial complexes is the filtration utilized to compute and define persistent homology.} \label{grow_cech}
\end{figure}

A priori, it is unclear which choice of scale (radius), best describes the data; 
and oftentimes different scales reveal different information.
Thus, to investigate the topology of our data, we consider the appearance and disappearance of homological features at growing scale. 
This multiscale viewpoint, called persistent homology, is introduced in \citep{Persistence} and yields a topological summary of the data called a persistence diagram.
This is possible because we have a growing filtration of complexes, so each complex is included in the next (see Fig. \ref{grow_cech}). 
These inclusion maps induce inclusion maps at the chain group level and in turn induce maps (though not typically inclusions) at the level of homology groups.
These induced maps $f_{r_1, r_2}:H_k(\textrm{\v Cech}(\bm{x},r_1)) \goto H_k(\textrm{\v Cech}(\bm{x},r_2))$ are referred to here as the persistence maps, and take features to features (i.e., generators to generators) or to zero \citep{bottleneck}. 
Thus, each feature is tracked by how far the persistence maps preserve it. 
In turn, tracking features is boiled down to a very specific algorithm for obtaining the birth and death radii for each homological feature (e.g., see \citep{CompyTopo}). 
Features which persist over a large range of scale are typically considered more important, and their presence is stable under small perturbations of the underlying data \citep{tda_stability}.

Persistent homology yields a multiset of homological features, each born at a scale $b_i$, lasting until its death scale $d_i$, with degree of homology $k_i$;
in short, it yields a persistence diagram $\mathscr{D} = \LC \xi_i \RC_{i=1}^M = \LC (b_i,d_i,k_i) \RC_{i=1}^M$.
We interpret the birth-death values as coordinate points with degree of homology as labels.
For clarity and simplicity, we ignore any features with death value $d_i = \infty$, since these features are generally a characteristic of the ambient space.
In particular, one homological feature with $(b,d,k) = (0,\infty,0)$ is expected from any $\cech$ filtration.

Specifically, for data in $\R^{\dim}$, we consider each feature as an element of 
\begin{equation} \label{eqn_wedge}
\W = W \times \LC 0,...,\dim-1 \RC, \\
\end{equation}
where $W = \LC (b,d) \in \R^2 : d > b \geq 0 \RC$ is the infinite wedge. 
As a topological space, the ${\dim}$-fold multiwedge $\W$ is treated as ${\dim}$-disconnected copies of $W$, where $W$ has the Euclidean metric and topology.

It is desirable to define a metric between persistence diagrams with which to measure topological similarity. 
In TDA, Hausdorff distance is typically used to compare underlying datasets, while the bottleneck distance (Defn. \ref{defn_bottleneck}) is used to compare their associated persistence diagrams \citep{tda_confidence, tda_guide}.

\begin{defn} \label{defn_bottleneck}
The bottleneck distance between two persistence diagrams $D_1$ and $D_2$ is given by
\begin{equation} \label{eqn_bottleneck}
W_\infty(D_1,D_2) = \min_\gamma \max_{x \in D_1} \LN x - \gamma(x) \RN_\infty.
\end{equation}
where $\gamma$ ranges over all possible bijections between $D_1$ and $D_2$ which match in degree of homology.
The diagonal $\LC b = d \RC$ is included in both persistence diagrams with infinite multiplicity so that any feature may be matched to the diagonal. 
\end{defn}

\begin{remark}
Due to the unstable presence of features near the diagonal, typical metrics on persistence diagrams such as the bottleneck distance treat the diagonal as part of every persistence diagram \citep{Wass_Structure} in order to achieve stability with respect to Hausdorff perturbations of the underlying dataset \citep{bottleneck}.
Morally, one considers the diagonal as representing vacuous features which are born and die simultaneously. 
For convenient computation, the definition of bottleneck distance can be applied to each degree of homology separately.
\end{remark}

\section{Random Persistence Diagrams} \label{sect:RPDs}
In this section we establish background to make the notion of probability density for a random persistence diagram explicit and well-defined.
A persistence diagram changes its feature cardinality under small perturbation of the underlying dataset, and these features have no intrinsic order.
Consequently, we cannot treat persistence diagrams as elements of a vector space.
Instead, we consider a random persistence diagram $D$ as a random multiset of features $D = \LC \xi_i \RC \subset \W$ in the multiwedge defined in Eq. \eqref{eqn_wedge}. 
For underlying datasets sampled from $\R^{\dim}$ with bounded cardinality, the affiliated $\cech$ persistence diagrams also have bounded feature cardinality and degree of homology.
Thus, we assume that the cardinality of a random persistence diagram is bounded above by some value $\Ln D \Rn \leq M \in \N$ , and so consider the space $\mathcal{C}_{\leq M}(\W) = \LC D \textrm{ multiset in } \W : \Ln D \Rn \leq M \RC$.
We view $\mathcal{C}_{\leq M}(\W)$ through a list of functions $h_N$ which each map the appropriate dimension of Euclidean space into its corresponding cardinality component, $\mathcal{C}_{N}(\W)$.
This viewpoint facilitates the definition of probability densities. 

\begin{defn} \label{defn_hit-or-miss}
For each $N \in \LC 0 , ..., M \RC$, consider the space of $N$ topological features, denoted $\mathcal{C}_N(\W) = \LC D \textrm{ multiset in } \W : \Ln D \Rn = N \RC$, and the associated map $h_N: \W^N \goto \mathcal{C}_N(\W)$ defined by
\begin{equation} \label{eqn_E2HoM}
h_N(\xi_1,...,\xi_N) = \LC \xi_1,...,\xi_N \RC.
\end{equation}
The map $h_N$ creates equivalence classes on $\W^N$ according to the action of the permutations $\Pi_N$;
specifically, $\LB Z \RB = \LB \LP \xi_1,...,\xi_N \RP \RB_{h_N} = \LC \LP \xi_{\pi(1)},...,\xi_{\pi(N)} \RP : \pi \in \Pi_N \RC$ for each $Z = \LP \xi_1,...,\xi_N \RP \in \W^N$.
These equivalence classes yield the space
\begin{equation} \label{eqn_mod_space}
\W^N / \Pi_N = \LC \LB \bm{\xi} \RB_{h_N} : \bm{\xi} \in \W^N \RC,
\end{equation}
equipped with the quotient topology.
The topology on $\mathcal{C}_{\leq M}(\W)$ is defined so that each $h_N$ lifts to a homeomorphism between $\W^N / \Pi_N$ and $\mathcal{C}_N(\W)$.
\end{defn}

With a topology in hand, one can define probability measures on the associated Borel $\sigma$-algebra. 
Thus, we define a random persistence diagram $D$ to be a random element distributed according to some probability measure on $\mathcal{C}_{\leq M}(\W)$ for a fixed maximal cardinality $M \in \N$.
We denote associated probabilities by $\P \LB \cdot \RB$ and expected values by $\E \LB \cdot \RB$.
Since $\W^N / \Pi_N \cong \mathcal{C}_N(\W)$, we work toward defining probability densities on the collection of Euclidean spaces $\cup_{N=0}^M \W^N$. 

\begin{defn}[\citep{Matheron}] \label{defn_belief}
For a given random persistence diagram $D$ and any Borel subset $A$ of $\W$, the belief function $\beta_D$ is defined as
\begin{equation} \label{eqn_belief}
\beta_D(A) = \P \LB D \subset A \RB.
\end{equation}
\end{defn}
 
Since $A$ is a Borel subset of $\W$, the collection $O_A = \LC D \in \mathcal{C}_{\leq M}(\W) : D \subset A \RC$ is the quotient of $\cup_{N=0}^M A^N \subset \cup_{N=0}^M \W^N$ under $h_N$; moreover, $A^N$ is clearly Borel in the Euclidean topology of $\cup_{N=0}^M \W^N$. 
Therefore, since $h_N$ induces a homeomorphism (see Defn \ref{defn_hit-or-miss}), $O_A$ is a Borel subset of $\mathcal{C}_{\leq M}(\W)$.
The belief function of a random persistence diagram is similar to the joint cumulative distribution function for a random vector,
in particular by yielding a probability density function through Radon-Nikod\'{y}m type derivatives. 

\begin{defn} \label{defn_set_derivative}
\citep{Matheron} Fix $\phi$ defined on Borel subsets of $\mathcal{C}_{\leq M}(\W)$ into $\R$. 
For an element $\xi \in \W$ or a multiset $Z \subset \W$ with $Z = \LC \xi_1,...,\xi_N \RC$, the set derivative (evaluated at $\emptyset$) is respectively given by
\begin{equation} \label{eqn_set_derivative}
\begin{split}
\frac{\delta \phi}{\delta \xi}(\emptyset) &= \lim_{n \goto \infty} \frac{\phi(B(\xi,1/n))}{\lambda(B(\xi,1/n))}, \\
\frac{\delta \phi}{\delta Z}(\emptyset) &= \frac{\delta^N \phi}{\delta \xi_1 ... \delta \xi_N} = \LB \frac{\delta}{\delta \xi_1} \cdot \cdot \cdot \frac{\delta}{\delta \xi_N}\phi \RB (\emptyset),
\end{split}
\end{equation}
where $B(\xi,1/n)$ are Euclidean balls and $\lambda$ indicates Lebesgue measure on $\W$.
\end{defn}

\begin{remark}
Defn. \ref{defn_set_derivative} for set derivatives at the empty set closely mirrors the Radon-Nikod\'{y}m derivative with respect to Lebesgue measure.
The definition of a set derivative evaluated on a nonempty set is more involved, and is found in \citep{Matheron}. 
Here we are primarily concerned with evaluation at $\emptyset$, since this suffices for the definition of a probability density function.
Also, note that set derivatives satisfy the product rule. 
\end{remark}

\begin{remark}
Restricting to a particular cardinality $N$, consider $\phi_N = \phi \circ h_N$, a function on Euclidean space which is invariant under the action of $\Pi_N$.
The viewpoint of $\phi_N$ elucidates the relationship between set derivatives and Radon-Nikod\'{y}m derivatives with respect to Lebesgue measure. 
This viewpoint also shows that the iterated derivative given in Eq. \eqref{eqn_set_derivative} is independent of order and thus is well-defined for a multiset $Z$. 
\end{remark}

As with typical derivatives, there is a complementary set integration operation for set derivatives.
Set derivatives (at $\emptyset$) are essentially Radon-Nikod\'{y}m derivatives with order tied to cardinality, and so the corresponding set integral acts like Lebesgue integration summed over each cardinality.

\begin{defn} \label{defn_set_integral}
Consider a Borel subset $A$ of $\W$ and a Borel subset $O$ of $\mathcal{C}_{\leq M}(\W)$.
For a set function $f:\mathcal{C}_{\leq M}(\W) \goto \R$, its set integrals over $A$ and $O$ are respectively defined according to the following sums of Lebesgue integrals:
\begin{subequations} 
\begin{align}
\int_A f(Z) \delta Z &= \sum_{N=0}^M \frac{1}{N!} \int_{A^N} f(h_N(\xi_1,...,\xi_N)) d\xi_1 ... d\xi_N, \label{eqn_set_integral_a} \\
\int_O f(Z) \delta Z &= \sum_{N=0}^M \frac{1}{N!} \int_{h_N^{-1}(O)} f(h_N(\xi_1,...,\xi_N)) d\xi_1 ... d\xi_N, \label{eqn_set_integral_b}
\end{align}
\end{subequations}
where $Z = \LC \xi_1,...,\xi_N \RC \subset \W$ is a persistence diagram. 
\end{defn}

\noindent Dividing by $N!$ in Eqs. \eqref{eqn_set_integral_a} and \eqref{eqn_set_integral_b} accounts for integrating over $\W^N$ instead of $\W^N/\Pi_N \cong \mathcal{C}_N(\W)$.
It has been shown that set derivatives and integrals are inverse operations \citep{Matheron};
specifically, the set derivative of a belief function yields a probability density for a random diagram $D$ such that
\begin{equation} \label{eqn_belief_as_pdf}
\beta_D(A) = \int_A \frac{\delta \beta_D}{\delta Z}(\emptyset) \delta Z.
\end{equation}
Indeed, $A^N = h_N^{-1}(\LC D \subset A \RC)$ so that Eq. \eqref{eqn_set_integral_a} also holds as an integral over $O_A = \LC D \in \mathcal{C}_{\leq M} : D \subset A \RC$ in the sense of Eq. \eqref{eqn_set_integral_b}.

\begin{defn} \label{defn_global_pdf}
For a random persistence diagram $D$, a global probability density function (global pdf) $f_D: \cup_{N\in\N} \W^N \goto \R$ must satisfy
\begin{equation} \label{eqn_global_pdf}
\sum_{\pi \in \Pi_N} f_D(\xi_{\pi(1)},...,\xi_{\pi(N)}) = \frac{\delta^N \beta_D}{\delta \xi_1 \cdot ... \cdot \delta\xi_N} (\emptyset).
\end{equation}
and is described by its layered restrictions $f_N = f_D\big{|}_{\W^N}:\W^N \goto \R$ for each $N$.
\end{defn}

\begin{remark} \label{rmk_local_vs_global}
It is necessary to make a distinction between local and global densities because the global pdf is not defined on a single Euclidean space, and is instead expressed as a collection of densities over a range of dimensions. 
Specifically, while each local density $f_N$ (for input cardinality $N$) is defined on $\W^N$, the global pdf $f_D$ is defined on $\cup_{N=1}^M \W^N$ and restricts to a local density on each input dimension.
Each local density $f_N(Z) = f_D\big{|}_{\W^N}(Z)$ decomposes into the product of the conditional density $f_D( Z \big | \Ln Z \Rn = N)$ and the cardinality probability $\P[\Ln Z \Rn = N]$ (this follows from Prop. \ref{prop_belief_layers}).
Thus, each local density does not integrate to one, but instead to the associated probability $\P[\Ln Z \Rn = N]$. 
Also, the global pdf is not a set function and does not require division by $N!$, leading to the following relation:
$\int_{A^N} f_D(\xi_1,...,\xi_N) d\xi_1...d\xi_N = \frac{1}{N!} \int_{A^N} \frac{\delta^N \beta_D}{\delta^N Z} (\emptyset) d\xi_1...d\xi_N.$
\end{remark}
\begin{remark} \label{rmk_unique_symmetric}
While the global pdf and its local constituents need not be symmetric with respect to $\Pi_N$, 
there is a unique choice of global pdf (up to sets of Lebesgue measure 0) which satisfies Eq. \eqref{eqn_global_pdf} and is symmetric under the action of $\Pi_N$.
In this case, we safely abuse notation by denoting $f_D(\LC \xi_1, ..., \xi_N \RC) := N! f_D(\xi_1,...,\xi_N)$ and often write $f_D(Z)$ and allow context to determine whether $Z$ denotes a set or a vector. 
\end{remark}

The following proposition is critical to determine the global pdf for (i) the union of independent singleton diagrams (i.e., $\Ln D^j \Rn \leq 1$), (ii) a randomly chosen cardinality, $N$, followed by $N$ i.i.d. draws from a fixed distribution, and (iii) a random persistence diagram kernel density function. 
The proof of this proposition follows similar arguments to \citep{Nonparametric_Fusion} (Theorem 17, pp. 155--156).

\begin{prop} \label{prop_belief_layers}
Let $D$ be a random persistence diagram with cardinality bounded by $M$ and let $\beta_D(S) = \P(D \subset S)$ be the belief function for $D$. 
Then $\beta_D$ expands as
$$\beta_D(S) = a_0 + \sum_{m=1}^M a_mq_m(S),$$
where $a_m = \P(\Ln D \Rn = m)$ and $q_m(S)=\P[D \subset S \big{|} \Ln D \Rn = m]$.
\end{prop}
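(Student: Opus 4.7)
The plan is to derive the formula directly from the law of total probability, partitioning the sample space by the value of the cardinality $\Ln D \Rn$. Since $D \in \mathcal{C}_{\leq M}(\W)$, the cardinality takes only the finitely many values $0,1,\ldots,M$, so the events $\LC \Ln D \Rn = m \RC$ form a finite measurable partition of the underlying probability space. No limit argument is required; the result is essentially a finite bookkeeping identity.

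First, I would verify measurability of the relevant events. The topological decomposition $\mathcal{C}_{\leq M}(\W) = \bigsqcup_{N=0}^M \mathcal{C}_N(\W)$, together with the homeomorphisms $h_N : \W^N/\Pi_N \goto \mathcal{C}_N(\W)$ from Defn. \ref{defn_hit-or-miss}, makes each $\LC \Ln D \Rn = m \RC$ a Borel set in $\mathcal{C}_{\leq M}(\W)$. The event $\LC D \subset S \RC$ is Borel by the argument immediately following Defn. \ref{defn_belief}. Consequently, the joint and conditional probabilities appearing below are all well-defined, and $q_m(S) = \P\LB D \subset S \MB \Ln D \Rn = m\RB$ is meaningful whenever $a_m > 0$ (for $a_m = 0$ the corresponding term simply vanishes regardless of how $q_m(S)$ is chosen).

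Next, the law of total probability gives
\[
\beta_D(S) \;=\; \P\LB D \subset S \RB \;=\; \sum_{m=0}^M \P\LB D \subset S,\, \Ln D \Rn = m \RB \;=\; \sum_{m=0}^M a_m \,\P\LB D \subset S \MB \Ln D \Rn = m \RB.
\]
For $m \geq 1$, the conditional probability is exactly $q_m(S)$ by definition, contributing $a_m q_m(S)$. The $m=0$ term is the only one needing separate treatment: when $\Ln D \Rn = 0$, $D = \emptyset$ is trivially a subset of every $S$, so $\P\LB D \subset S \MB \Ln D \Rn = 0 \RB = 1$, which contributes $a_0$ to the sum. Assembling these pieces yields $\beta_D(S) = a_0 + \sum_{m=1}^M a_m q_m(S)$, as claimed.

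I do not foresee a substantive obstacle in the argument. The only subtlety worth flagging is the singular role of the empty-cardinality term, which is pulled out as the constant $a_0$ precisely because $q_0(S) \equiv 1$; this is why the statement does not include a $q_0$ factor. The bounded cardinality hypothesis $M < \infty$ keeps the sum finite and eliminates any need to address interchange of limits or countable additivity beyond what $\sigma$-additivity of $\P$ already supplies.
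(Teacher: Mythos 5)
Your proof is correct and is the standard law-of-total-probability decomposition over the finitely many cardinality values, with the $m=0$ term pulled out because $\emptyset \subset S$ always; this is precisely the argument the paper defers to its citation of \citep{Nonparametric_Fusion} rather than writing out. Your attention to measurability of the cardinality events and to the degenerate case $a_m = 0$ is appropriate and complete.
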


\begin{remark} \label{rmk_belief_layers}
The decomposition in Prop. \ref{prop_belief_layers} is often applied as a first step toward finding the local density constituents of the global pdf.
In particular, $f_N = f_D\big{|}_{\W^N} = 0$ for $N > M$. 
\end{remark}

Lastly, we encounter a computationally convenient summary for a random persistence diagram called the probability hypothesis density (PHD).
The integral of the PHD over a subset $U$ in $\W$ gives the expected number of points in the region $U$;
moreover, any other function on $\W$ with this property is a.e. equal to the PHD \citep{Mahler}.

\begin{defn} \label{defn_PHD}
\citep{Matheron} The probability hypothesis density (PHD) for a random persistence diagram $D$ is defined as the set function $F_D(a) = \frac{\delta \beta_D}{\delta Z} \LP\LC a \RC\RP$ and is expressed as a set integral as
\begin{equation} \label{eqn_PHD}
F_D(a) = \int_{\LC Z : \LC a \RC \subset Z \RC} \frac{\delta \beta}{\delta Z}(\emptyset) \delta Z.
\end{equation}
In particular, $\E(\Ln D \cap U \Rn) = \int_U F_D(u) \, du$ for any region $U$.
\end{defn}

\section{Kernel Density Estimation} \label{sect:KDE}

\subsection{Construction} \label{subsect:KDE_construction}

To estimate distributions of persistence diagrams, our goal is the creation of a kernel density function about a center persistence diagram $\mathscr{D}$ with a kernel bandwidth parameter $\sigma>0$, used for defining constituent Gaussians according to Definitions \ref{defn_upper_singletons} and \ref{defn_lower_cluster}.
Prop. \ref{prop_belief_layers} leads to the following lemma which is crucial for determining the kernel density. 
We refer to a random persistence diagram $D$ with $\Ln D \Rn \leq 1$ as a singleton diagram, and such
singletons are indexed by superscripts.

\begin{lemma} \label{lemma_combination}
Consider a multiset of independent singleton random persistence diagrams $\LC D^j \RC_{j=1}^M$. 
If each singleton $D^j$ is described by the value $q^{(j)} = \P[D^j \neq \emptyset]$ and the subsequent conditional pdf, $p^{(j)}(\xi)$, given $\Ln D^j \Rn = 1$, then the global pdf for $D = \cup_{j=1}^M D^j$ is given by
\begin{equation} \label{eqn_combination}
f_D(\xi_1,...,\xi_N) = \sum_{\gamma \in I(N, M)} \mathcal{Q}(\gamma) \prod_{k=1}^N p^{(\gamma(k))}(\xi_{k}),
\end{equation}
for each $N \in \LC 0,..., M \RC$ where
\begin{equation} \label{eqn_QQ}
\mathcal{Q}(\gamma) = \mathcal{Q}^*(\gamma) \prod_{k=1}^N q^{(\gamma(k))},
\end{equation}
$I(N,M)$ consists of all increasing injections $\gamma:\LC 1,...,N \RC \goto \LC 1,...,M \RC$, and
\begin{equation} \label{eqn_Qstar}
\mathcal{Q}^*(\gamma) = \frac{\prod_{j=1}^M (1-q^{(j)})}{\prod_{k=1}^N (1-q^{(\gamma(k))})}.
\end{equation}
\end{lemma}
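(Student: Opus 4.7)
The plan is to compute the belief function $\beta_D$ of $D = \cup_{j=1}^M D^j$ directly from independence, expand the resulting product as a sum indexed by increasing injections $\gamma \in I(N,M)$, and then recover the global pdf $f_D$ by comparing with the integral representation $\beta_D(A) = \sum_N \int_{A^N} f_D(\xi_1,\ldots,\xi_N)\, d\xi_1 \cdots d\xi_N$ implied by Defn. \ref{defn_global_pdf} together with Remark \ref{rmk_local_vs_global}.

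First I would write the singleton belief, obtained by conditioning on $\Ln D^j \Rn \in \LC 0, 1 \RC$, as
\begin{equation*}
\beta_{D^j}(A) = \LP 1 - q^{(j)} \RP + q^{(j)} \int_A p^{(j)}(\xi)\, d\xi.
\end{equation*}
Since $\LC D \subset A \RC = \cap_{j=1}^M \LC D^j \subset A \RC$ and the $D^j$ are independent, this factors as $\beta_D(A) = \prod_{j=1}^M \beta_{D^j}(A)$. Expanding this $M$-fold product, each factor contributes either $(1-q^{(j)})$ or $q^{(j)} \int_A p^{(j)}$; the choices are parametrized by subsets $S \subset \LC 1,\ldots,M \RC$, which I would re-index by their unique increasing enumerations $\gamma \in I(\Ln S \Rn, M)$. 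Collecting the unused $(1-q^{(j)})$ factors for $j \notin S$ as $\mathcal{Q}^*(\gamma) = \prod_{j=1}^M (1-q^{(j)}) / \prod_{k=1}^{\Ln S \Rn} (1-q^{(\gamma(k))})$ and applying Fubini to the product of one-dimensional integrals would produce
\begin{equation*}
\beta_D(A) = \sum_{N=0}^M \sum_{\gamma \in I(N,M)} \mathcal{Q}(\gamma) \int_{A^N} \prod_{k=1}^N p^{(\gamma(k))}(\xi_k)\, d\xi_1 \cdots d\xi_N,
\end{equation*}
with $\mathcal{Q}(\gamma)$ exactly as in Eqs. \eqref{eqn_QQ}--\eqref{eqn_Qstar}.

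Finally, matching this layer by layer against $\beta_D(A) = \sum_N \int_{A^N} f_D\, d\xi$ forces equality of integrands for every product set $A^N$; a monotone class argument over the $\pi$-system of such products extends this to all Borel subsets of $\W^N$, yielding Eq. \eqref{eqn_combination} Lebesgue-a.e. The main obstacle I expect is the combinatorial bookkeeping: carefully translating subsets $S$ into increasing injections $\gamma$ and verifying that the complementary factors $\prod_{j \notin S}(1-q^{(j)})$ reassemble into $\mathcal{Q}^*(\gamma)$ without double-counting. A secondary subtlety is that the non-symmetric form delivered by the expansion (with $\xi_k$ paired to $p^{(\gamma(k))}$ for each $k$) is a legitimate choice of global pdf in the sense of Defn. \ref{defn_global_pdf}; passing to the symmetric representative of Remark \ref{rmk_unique_symmetric} would only require a routine averaging over $\Pi_N$.
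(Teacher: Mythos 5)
Your proof is correct, but it takes a genuinely different route from the paper's. The paper stays at the level of set derivatives: it factors $\beta_D = \prod_j \beta_{D^j}$ by independence, applies the product rule for set derivatives at $\emptyset$, uses that higher derivatives of singleton belief functions vanish to reduce to first derivatives $\frac{\delta\beta_{D^j}}{\delta\xi}(\emptyset) = q^{(j)}p^{(j)}(\xi)$, and then reorganizes the resulting sum over ordered index tuples into increasing injections composed with permutations before invoking Eq. \eqref{eqn_global_pdf}. You instead work with the ``integrated'' object: you expand the product of belief functions algebraically over subsets $S \subset \LC 1,\ldots,M\RC$ and recover the density by a uniqueness argument. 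Your version is more elementary in that it needs neither the product rule for set derivatives nor the observation about vanishing higher derivatives (the subset expansion does that bookkeeping automatically), and the translation from subsets to increasing injections is cleaner than the paper's passage through ordered tuples modulo $\Pi_N$. What it costs you is the final identification step, and there is one point to state more carefully: the product sets $A^N$ generate only the $\Pi_N$-symmetric sub-$\sigma$-algebra of $\W^N$, so matching layers over all Borel $A$ determines the density only up to symmetrization --- not ``for all Borel subsets of $\W^N$'' as you wrote. This is harmless here, since Defn. \ref{defn_global_pdf} only constrains $\sum_{\pi \in \Pi_N} f_D(\xi_{\pi(1)},\ldots,\xi_{\pi(N)})$ and your candidate is a legitimate asymmetric representative exactly as you note, but the extension claim should be weakened accordingly. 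The layer-by-layer matching itself is justified because the terms with $\Ln S \Rn = N$ in your expansion are precisely $\P\LB D \subset A,\ \Ln D \Rn = N\RB$, so both sides decompose canonically by cardinality.
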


\begin{proof}
Since the singleton events $D^j$ are independent, the belief function for $D = \cup_j D^j$ decomposes into $\beta_D(S) = \prod_{j=1}^M \beta_{D^j}(S)$.
Next, we employ the product rule for the set derivative (see Defn. \ref{defn_set_derivative}) to obtain the global pdf for $D$ in terms of the singleton belief functions and their first derivatives. 
Higher derivatives of $\beta_{D^j}$ are zero since $D^j$ are singletons (see Remark \ref{rmk_belief_layers}). 
Thus, the product rule yields first derivatives on all (ordered) subsets of the singleton belief functions:
\begin{align*}
\frac{\delta^N \beta_D}{\delta \xi_1 ... \delta \xi_N}(\emptyset) = \sum_{1 \leq j_1 \neq,...,\neq j_N \leq M} \frac{\beta_{D^1}(\emptyset) \cdot \cdot \cdot \beta_{D^M}(\emptyset)}{\beta_{D^{j_1}}(\emptyset) \cdot \cdot \cdot \beta_{D^{j_N}}(\emptyset)}
\LB \frac{\delta \beta_{D^{j_1}}}{\delta \xi_{1}}(\emptyset) \cdot \cdot \cdot \frac{\delta \beta_{D^{j_N}}}{\delta \xi_{N}}(\emptyset)\RB.
\end{align*}
By Prop. \ref{prop_belief_layers}, we have that $\beta_{D^j}(\emptyset) = (1-q^{(j)})$ and $\dfrac{\delta \beta_{D^{j_i}}}{\delta\xi_{i}}(\emptyset) = q_{j_i}p^{(j_i)}(\xi_{i})$ and so
\begin{align*}
\frac{\delta^N \beta_D}{\delta \xi_1 ... \delta \xi_N}(\emptyset) = \sum_{1 \leq j_1 \neq,...,\neq j_N \leq M} 
\LB \frac{\prod_{j=1}^M (1-q^{(j)})}{\prod_{j=1}^N (1-q^{(j_k)})} \prod_{k=1}^N q^{(j_k)} \RB \prod_{k=1}^N p^{(j_k)}(\xi_{k}),
\end{align*}
which nearly resembles Eq. \eqref{eqn_combination}.
To bridge the gap, we describe the choice of indices $j_i$ by an injective function from $\LC 1,...,N \RC$ into $\LC 1,...,M \RC$.
In turn, each such injective function is uniquely determined by the composition of an increasing injection $\gamma \in I(N,M)$ which decides the range of the function and permutations on the domain, $\Pi_N$.
These permutations take into account the order of the range. 
The value of $\mathcal{Q}$ is independent of order, and thus is determined by $\gamma$ as in Eq. \eqref{eqn_QQ}. 
We reorder the product in order to shift these permutations onto the input variables, obtaining
\begin{equation} \label{eqn_precomb}
\frac{\delta^N \beta_D}{\delta \xi_1 ... \delta \xi_N}(\emptyset) = \sum_{\pi \in \Pi_N} \sum_{\gamma \in I(N, M)} \mathcal{Q}(\gamma) \prod_{k=1}^N p^{(\gamma(k))}(\xi_{\pi(k)}).
\end{equation}
Finally, the global pdf in Eq. \eqref{eqn_combination} follows directly from applying Eq. \eqref{eqn_global_pdf} to Eq.\eqref{eqn_precomb}. 
\end{proof}

\begin{remark}
The global pdf in Eq. \eqref{eqn_combination}, and in particular the sum over $\gamma \in I(N,M)$, accounts for each possible combination of singleton presence. 
Moreover, summing over permutations as in Eq. \eqref{eqn_precomb} and dividing by $N!$ yields a symmetric pdf with terms for every possible assignment between singletons and inputs. 
The weights $\mathcal{Q}(\gamma)$ indicate the probability of each assignment occurring, and is the product of the appropriate probability for each singleton to be either present, $q^{(j)}$, or absent, $1-q^{(j)}$, for each $j$. 
\end{remark}

\begin{example} \label{ex_double_single} \rm
Consider two 1-dimensional singleton diagrams, $D^1$ and $D^2$, with probabilities of being nonempty $q^{(1)} = 0.6$ and $q^{(2)} = 0.8$, respectively. The corresponding local densities when nonempty are given by $p^{(1)}(x) = \frac{1}{\sqrt{2\pi}} e^{-(x+1)^2/2}$ and $p^{(2)}(x) = \frac{1}{\sqrt{2\pi}} e^{-(x-1)^2/2}$.
Lemma \ref{lemma_combination} yields the global pdf for $D = D^1 \cup D^2$ through a set of local densities $\LC f_0, f_1(x),f_2(x,y)\RC$ such that $f_0 = \P[\Ln D \Rn = 0] = (1-q^{(1)})(1-q^{(2)}) = 0.08$, $f_1 = f_D\big{|}_{\R}$, and $f_2 = f_D\big{|}_{\R^2}$. 
We sum over permutations and divide by $N!$ ($N=1,2$ is the input cardinality) to obtain a symmetric global pdf. 
\begin{subequations}
\begin{align}
\begin{split}
f_1(x) &= (1-q^{(2)})q^{(1)}p^{(1)}(x) + (1-q^{(1)})q^{(2)}p^{(2)}(x) \\
&= \frac{0.12}{\sqrt{2\pi}}e^{-(x+1)^2/2} +  \frac{0.32}{\sqrt{2\pi}} e^{-(x-1)^2/2} ,\label{eqn_012_A}
\end{split} \\[3mm]
\begin{split}
f_2(x,y) &= \frac{q^{(1)}q^{(2)}}{2} \LB p^{(1)}(x)p^{(2)}(y) + p^{(1)}(y)p^{(2)}(x) \RB \\
&= \frac{0.24}{2\pi} \LP e^{-((x-1)^2 + (y+1)^2)/2} + e^{-((x+1)^2 + (y-1)^2)/2} \RP. \label{eqn_012_B}
\end{split}
\end{align}
\end{subequations}
Accounting for each cardinality and following Eq. \eqref{eqn_012_A} and Eq. \eqref{eqn_012_B}, the total probability adds up to 
\begin{align*}
\P[\Ln D \Rn = 0] + \P[\Ln D \Rn = 1] + \P[\Ln D \Rn = 2] &= f_0 + \int_\R f_1(x) dx + \int_{\R^2} f_2(x,y) dx dy \\
&= (0.08) + (0.12 + 0.32) + (0.24 + 0.24) = 1,
\end{align*}
as desired.
The local densities in Eq. \eqref{eqn_012_A} and Eq. \eqref{eqn_012_B} are plotted in Fig. \ref{fig_012}.
Though $f_1(x)$ is the sum of two Gaussians, in Fig. \ref{fig_012} (Left) we see that the Gaussian centered at $x=1$ dominates, while the Gaussian centered at $x=-1$ is only indicated by a heavy left tail.
This behavior occurs because $q^{(2)} = 0.8$ is very close to $1$.

\begin{figure}
\begin{center}
\includegraphics[scale = 0.4]{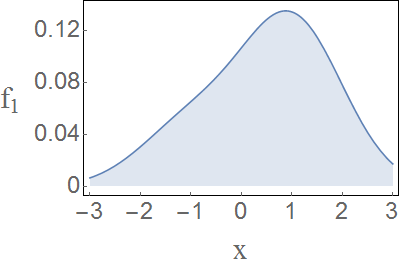} \quad \quad
\includegraphics[scale = 0.4]{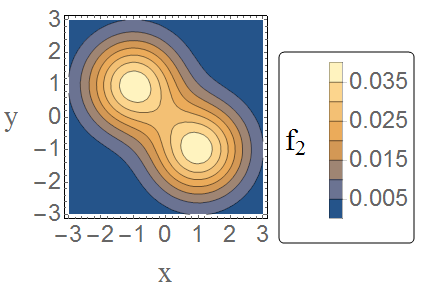}
\end{center}
\caption{\emph{Left:} Plot of the local density $f_1(x)$ in Eq. \eqref{eqn_012_A}. \emph{Right:} Contour plot of the local density $f_2(x,y)$ in Eq. \eqref{eqn_012_B}.
These pdfs cover the different possible input dimensions and are symmetric under permutations of the input.} \label{fig_012}
\end{figure}
\end{example}

Now we turn toward defining the kernel density.
We first define a random persistence diagram as a union of simpler constituents, and then determine its global pdf by combination in a fashion similar to Lemma \ref{lemma_combination}.
Indeed, we define the desired kernel density as the global pdf for this composite random diagram.
To start, we fix a degree of homology $k$ and consider a center diagram $\mathscr{D} \subset \mathcal{W}_k = W \times \LC k \RC$ (see Eq. \eqref{eqn_wedge}). 
Since $k$ is fixed, we treat $\mathscr{D} = \LC \xi_i \RC_{i=1}^M = \LC (b_i,d_i) \RC_{i=1}^M$ within $W = \LC (b,d) \in \R^2 : d > b \geq 0 \RC$.

Long persistence points in a persistence diagram represent prominent topological features which are stable under perturbation of underlying data, and so it is important to track each independently.
In contrast, we leverage the point of view that the small persistence features near the diagonal are considered together as a single geometric signature as opposed to individually important topological signatures.
Toward this end, features with short persistence are grouped together and interpreted through i.i.d. draws near the diagonal. 
Since features cluster near the diagonal in a typical persistence diagram (see, e.g., Fig. \ref{fig_PD_UD_wob} (Right) in the supplementary materials), treating short persistence features collectively simplifies our kernel density and thus speeds up its evaluation. 
It is imperative that these short persistence features are not ignored, because they still capture crucial geometric information for applications such as classification \citep{MaMa16,MaMa16-2, persissensor,fullerene,tda_phase,tda_geo_noise}.
Thus, we split $\mathscr{D}$ into upper and lower portions according to a bandwidth $\sigma$ as
\begin{equation} \label{eqn_split}
\mathscr{D}^u = \LC (b_i,d_i,k) \in \mathscr{D} : d_i-b_i \geq \sigma \RC \textrm{ and }
\mathscr{D}^\ell = \LC (b_i,d_i,k) \in \mathscr{D} : d_i-b_i < \sigma \RC.
\end{equation}

Now define random diagrams $D^u$ centered at $\mathscr{D}^u$ and $D^\ell$ centered at $\mathscr{D}^\ell$ such that $D = D^u \cup D^\ell$.
Ultimately, the global pdf of $D$ centered at $\mathscr{D}$ is our kernel density.

\begin{defn} \label{defn_upper_singletons}
Each feature $\xi_j = (b_j,d_j) \in \mathscr{D}^u$ yields an independent random singleton diagram $D^j$ defined by its chance to be nonempty $q^{(j)}$ (via Eq. \eqref{eqn_nonempty}) along with its potential position $(b,d)$ sampled according to a modified Gaussian distribution, denoted by $N^*((b_j,d_j), \sigma I)$.
The global pdf for $D^u$ is then determined by Lemma \ref{lemma_combination}, where each $p^{(j)}$ is given by the pdf associated with $N^*((b_j,d_j),\sigma I)$, which is given by
\begin{equation} \label{eqn_mod_normal}
p^{(j)}(b,d) = \dfrac{\varphi_j(b,d)}{\int_{W} \varphi_j(u,v) du \, dv} \one_{W}(b,d),
\end{equation}
where $\varphi_j$ is the pdf of the (unmodified) normal $N((b_j,d_j),\sigma I)$, and $\one_W(\cdot)$ is the indicator function for the wedge. 
\end{defn}

The global pdf for each $D^j$ is readily obtained by a pair of restrictions. 
First, we restrict the usual Gaussian distribution to the halfspace \hbox{$T = \LC (b,d) \in \R^2 : b < d \RC$}. 
Features sampled below the diagonal are considered to disappear from the diagram and thus we define the chance to be nonempty by 
\begin{equation} \label{eqn_nonempty}
q^{(j)} = \P(D^j \neq \emptyset) = \int_{\LC v > u \RC} \varphi_j(u,v) \, du \, dv.
\end{equation}
Afterward, the Gaussian restricted to $T$ is further restricted to $W$ and renormalized to obtain a probability measure as in Eq. \eqref{eqn_mod_normal}.
This double restriction to both $T$ and $W$ is necessary for proper restriction of the Gaussian pdf and definition of $q^{(j)}= \P(D^j \neq \emptyset)$. 
Indeed, restriction to $W$ alone causes points with small birth time to have an artificially high chance to disappear; while restriction to $T$ alone yields nonsensical features with negative radius (with $b < 0$). 
In kernel density estimation, the effects of this distinction become negligible as the bandwidth goes to zero. 
In practice, this distinction is important for features with small birth time relative to the bandwidth. 

\begin{remark}
In the $\cech$ construction of a persistence diagram, a feature lies on the line $b = 0$ if and only if it has degree of homology $k=0$. 
Consequently, for a feature $(0,d_j)$ with $k=0$, we instead take
$$ p^{(j)}(d) = \frac{\phi_j(d)}{\int_{\R^+} \phi_j(u) \, du} \one_{\R^+}(d) \textrm{ and } q^{(j)} = \int_{\R^+} \phi_j(u) \, du $$
where $\phi_j$ is the 1-dimensional Gaussian centered at $d_j$ with standard deviation $\sigma$. 
\end{remark}

Whereas the large persistence features in $D^u$ have small chance to fall below the diagonal and disappear, the existence of the small persistence features in $D^\ell$ is volatile: these features disappear and appear fluidly under small changes in the underlying data. 
The distribution of $D^\ell$ is described by a probability mass function (pmf) $\nu$ and lower density $p^\ell$. 

\begin{defn} \label{defn_lower_cluster}
The lower random diagram $D^\ell$ is defined by choosing a cardinality $N$ according to a pmf $\nu$ followed by $N$ i.i.d. draws according to a fixed density $p^\ell$.
First, take $N_\ell = \Ln \mathscr{D}^\ell \Rn$ and define $\nu(\cdot)$ with mean $N_\ell$ and so that $\nu(n) = 0$ for $n > mN_\ell$ for some $m >0$ independent of $N_\ell$. 
The subsequent density $p^\ell(b,d)$ is given by projecting the lower features $\mathscr{D}^\ell$ of the center diagram $\mathscr{D}$ onto the diagonal $b = d$, then creating a restricted Gaussian kernel density estimation for these features; specifically,
\begin{equation} \label{eqn_lower_density}
p^\ell(b,d) = \frac{1}{N_\ell} \sum_{(b_i,d_i) \in \mathscr{D}^\ell} \frac{1}{\pi\sigma^2} e^{-\LP\LP b - \frac{b_i+d_i}{2} \RP^2 + \LP d - \frac{b_i+d_i}{2} \RP^2\RP/2\sigma^2}.
\end{equation}
\end{defn}

Projecting the lower features $\mathscr{D}^\ell$ of the center diagram $\mathscr{D}$ onto the diagonal simplifies later analysis and evaluation of $p^\ell$;
without projecting, a unique normalization factor, similar to $q^{(j)}$ in Defn. \ref{defn_upper_singletons}, would be required for each Gaussian summand in Eq. \eqref{eqn_lower_density}.
By Prop. \ref{prop_belief_layers} and Eq. (\ref{eqn_global_pdf}), global pdfs of random persistence diagrams are described by a random vector pdf for each cardinality layer, resulting in the following global pdf for $D^\ell$:
\begin{equation} \label{eqn_lower_dist}
f_{D^\ell}(\xi_1,...,\xi_N) = \nu(N) \prod_{j=1}^N p^\ell(\xi_j).
\end{equation}
Combining the expressions for $D^\ell$ and $D^u$, we arrive at the following proposition.

\begin{theorem} \label{thm_construction}
Fix a center persistence diagram $\mathscr{D}$ and bandwidth $\sigma>0$.
Split $\mathscr{D}$ into $\mathscr{D}^\ell$ and $\mathscr{D}^u$ according to Eq. \eqref{eqn_split}.
Define $D^\ell$ with global pdf from Eq. \eqref{eqn_lower_dist}, and $D^u$ with global pdf from Eq. \eqref{eqn_combination}.
Treating the random persistence diagrams $D^u$ and $D^\ell$ as independent, the kernel density centered at $\mathscr{D}$ with bandwidth $\sigma$ is given by
\begin{equation} \label{eqn_construction}
K_\sigma(Z, \mathscr{D}) = \sum_{j=0}^{N_u} \nu(N-j) \sum_{\gamma \in I(j,N_u)} \mathcal{Q}(\gamma) \prod_{k=1}^j  p^{(\gamma(k))}(\xi_{k}) \prod_{k=j+1}^N  p^\ell(\xi_{k}),
\end{equation}
where $Z= (\xi_1,...,\xi_N)$ is the input, $\xi_i = (b_i,d_i)$ for $i = 1, ..., N$ are the features, and $N_u = \Ln \mathscr{D}^u \Rn$ depends on both $\mathscr{D}$ and $\sigma$. 
Here $\mathcal{Q}(\gamma)$ is given by Eq. \eqref{eqn_QQ}, each $p^{(j)}$ refers to the modified Gaussian pdf as shown in Eq. \eqref{eqn_mod_normal} for its matching feature $\xi_j$ in $D^u$, and $p^\ell$ is given by Eq. \eqref{eqn_lower_density}.
\end{theorem}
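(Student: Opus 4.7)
My strategy is to exploit independence of $D^u$ and $D^\ell$ to factor the belief function of $D = D^u\cup D^\ell$, apply the product rule for the set derivative at $\emptyset$, and then substitute the explicit global pdfs of the two constituents supplied by Lemma \ref{lemma_combination} and Eq. \eqref{eqn_lower_dist}.

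First, independence gives $\beta_D(S) = \beta_{D^u}(S)\beta_{D^\ell}(S)$ for every Borel $S \subset \mathcal{W}_k$. Iterating the product rule of Defn. \ref{defn_set_derivative} then yields
\[
\frac{\delta^N \beta_D}{\delta\xi_1\cdots\delta\xi_N}(\emptyset)
= \sum_{S\subset\{1,\ldots,N\}}
\frac{\delta^{|S|}\beta_{D^u}}{\prod_{i\in S}\delta\xi_i}(\emptyset)\;
\frac{\delta^{N-|S|}\beta_{D^\ell}}{\prod_{i\in S^c}\delta\xi_i}(\emptyset),
\]
with $j := |S|$ automatically capped at $N_u$ since higher set derivatives of the $N_u$-singleton belief $\beta_{D^u}$ vanish (Remark \ref{rmk_belief_layers}).

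Second, I substitute the two pieces. For $D^u$, Lemma \ref{lemma_combination} combined with Eq. \eqref{eqn_global_pdf} expresses the $j$-th set derivative as a sum over arbitrary injections $\{1,\ldots,j\}\to\{1,\ldots,N_u\}$ with weights $\mathcal{Q}$; since $\mathcal{Q}(\gamma)$ depends only on the image of $\gamma$, each such injection repackages as an increasing injection $\gamma \in I(j,N_u)$ together with a permutation of the $\xi_i$ indexed by $S$. For $D^\ell$, Eq. \eqref{eqn_lower_dist} is already symmetric, so the $(N-j)$-th set derivative equals $(N-j)!\,\nu(N-j)\prod_{i\in S^c} p^\ell(\xi_i)$. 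Because Defn. \ref{defn_global_pdf} only requires $K_\sigma$ to satisfy Eq. \eqref{eqn_global_pdf} rather than be symmetric (as noted in Remark \ref{rmk_unique_symmetric}), I choose the asymmetric representative in which positions $1,\ldots,j$ carry the upper-feature densities and positions $j+1,\ldots,N$ carry the lower-feature densities. The combinatorial factors $\binom{N}{j}\cdot j!\cdot (N-j)! = N!$, arising from the $\binom{N}{j}$ choices of $S$, the $j!$ upper-orderings, and the $(N-j)!$ lower-orderings, then cancel the $1/N!$ normalization and leave precisely Eq. \eqref{eqn_construction}.

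The principal obstacle is combinatorial bookkeeping: verifying that the weights $\mathcal{Q}(\gamma)$ over $\gamma\in I(j,N_u)$ from Lemma \ref{lemma_combination}, the two factorials from the iterated set derivatives, the $\binom{N}{j}$ choices of subset $S$, and the $1/N!$ symmetrization combine into exactly the coefficients of Eq. \eqref{eqn_construction} with no residual factor and no overcounting. As a sanity check I would compute the marginal $\P[|D|=n] = \tfrac{1}{n!}\int K_\sigma(\xi_1,\ldots,\xi_n,\mathscr{D})\,d\xi_1\cdots d\xi_n$ and confirm it equals the convolution of the cardinality laws of $D^u$ and $D^\ell$, or equivalently use Defn. \ref{defn_set_integral} to verify $\int K_\sigma(\cdot,\mathscr{D})\,\delta Z = 1$.
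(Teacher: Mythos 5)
Your proposal is correct and follows essentially the same route as the paper's proof: factor $\beta_D=\beta_{D^u}\beta_{D^\ell}$ by independence, apply the set-derivative product rule with the order-$N_u$ truncation from Remark \ref{rmk_belief_layers}, substitute the constituent global pdfs via Eq. \eqref{eqn_global_pdf}, and reorganize the injections into increasing injections composed with permutations so that the combinatorial factors cancel the $1/N!$ symmetrization. The only cosmetic difference is that you index the Leibniz expansion by subsets $S$ while the paper indexes by ordered tuples and then passes to permutations with explicit $j!\,(N-j)!$ redundancy factors; the bookkeeping is equivalent.
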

\begin{proof}
Since $D^u$ and $D^\ell$ are independent random persistence diagrams, the belief function decomposes into $\beta_D(S) = \beta_{D^u}(S) \beta_{D^\ell}(S)$. 
Moreover, since derivatives above order $N_u$ vanish for $\beta_{D^u}$ (see Remark \ref{rmk_belief_layers}), the product rule and binomial-type counting yield
\begin{equation} \label{eqn_RND_KDE}
\begin{split}
\frac{\delta^N\beta_D}{\delta \xi_1 ... \delta \xi_N}(\emptyset) &= \sum_{j=0}^{N_u} \sum_{1\leq i_1 \neq ... \neq i_j \leq N} \frac{\delta^j \beta_{D^u}}{\delta \xi_{i_1} ... \delta \xi_{i_j}}(\emptyset) \frac{\delta^{N-j} \beta_{D^\ell}}{\delta \xi_1 ... \hat{\delta \xi_{i_1}} ... \hat{\delta \xi_{i_j}} ... \delta \xi_N}(\emptyset) \\
 &= \sum_{\pi \in \Pi_N} \sum_{j=0}^{N_u} \frac{1}{j!(N-j)!} \frac{\delta^j \beta_{D^u}}{\delta \xi_{\pi(1)} ... \delta \xi_{\pi(j)}}(\emptyset) \frac{\delta^{N-j} \beta_{D^\ell}}{\delta \xi_{\pi(j+1)} ... \delta \xi_{\pi(N)}}(\emptyset)
\end{split}
\end{equation}
where $\delta\hat{\xi_i}$ indicates that the given index is skipped in the set derivative (having been allocated to the other factor). 
Similar to the proof of Lemma \ref{lemma_combination}, the choice of indices $i_j$ is replaced with a permutation $\pi \in \Pi_N$;
however, the ordering within each derivative is unrelated the choice of $i_j$, leading to $j!$-fold and $(N-j)!$-fold redundancy within each term. 

Taking Eq. \eqref{eqn_lower_dist} together with Eq. \eqref{eqn_global_pdf} yields
$$\frac{\delta \beta_{D^\ell}}{\delta \xi_{\pi(j+1)} ... \delta \xi_{\pi(N)}}(\emptyset) = (N-j)! \nu(N-j) \prod_{j=1}^{N-j} p^\ell(\xi_j). $$
Also, Eq. \eqref{eqn_combination} and Eq. $\eqref{eqn_global_pdf}$ yield
$$ \frac{\delta \beta_{D^u}}{\delta \xi_{\pi(1)} ... \delta \xi_{\pi(j)}}(\emptyset) = \sum_{\pi^* \in \Pi_j} \sum_{\gamma \in I(j,N_u)} \mathcal{Q}(\gamma) \prod_{k=1}^j  p^{(\gamma(k))}(\xi_{\pi^*(k)}). $$
We substitute these relations into the final expression of Eq. \eqref{eqn_RND_KDE}.
The first of these substitutions is straightforward, while the second has $j!$-fold redundant permutations overtop the existing permutations in $\Pi_N$.
These substitutions yield that $\frac{\delta^N\beta_D}{\delta \xi_1 ... \delta \xi_N}(\emptyset) = \sum_{\pi\in\Pi_N} K_\sigma(Z,\mathscr{D})$ as described in Eq. \eqref{eqn_construction} and shows that the kernel $K_\sigma(Z,\mathscr{D})$ satisfies the definition of a global pdf for $D$ (Defn. \ref{defn_global_pdf}). 
Finally, the sum over permutations is removed according to Eq. \eqref{eqn_global_pdf} to obtain the expression for $f_D(Z) = K_\sigma(Z,\mathscr{D})$.
\end{proof}

\begin{remark} \label{rmk_KDE_addition}
	A specific example of the component distributions provided for the kernel in Thm. \ref{thm_construction} is presented in Fig. \ref{heat}.
	Since the kernel density $K_\sigma$ of Eq. \eqref{eqn_construction} is a probability density according to Defn. \ref{defn_global_pdf}, it is a function on $\cup_{N=0}^M \W^N$, and so the sum of several such kernels is defined by adding each local pdf layer separately.
\end{remark}
\vspace{-5mm}
\begin{remark} \label{rmk_split_reason}
	Each feature in the upper random persistence diagram is described independently, while all the features in the lower random persistence diagram are described by a single density $p^\ell$.
	Evaluation of the kernel density in Eq. \eqref{eqn_construction} is made rapid by factoring the repeated (product) evaluation of $p^\ell$, a typical 2D Gaussian KDE, despite the kernel's definition in a very high-dimensional space.
	Indeed, while evaluation computation increases exponentially when upper features are added (of which there should be few), it only increases linearly for additional lower features.
	Furthermore, in datasets with too many points, one typically subsamples, e.g. by min-max sampling, to reduce the computational burden of calculating the persistence diagram itself (e.g., see \citep{tda_subsample}), yielding a persistence diagram with fewer features.
\end{remark}
\vspace{-5mm}
\begin{remark} \label{rmk_split}
	In the definition of our kernel, a single parameter $\sigma$ has been chosen for both the split of center diagrams, as well as the standard deviation used in the Gaussians which build our kernel.
	Without loss of generality, this choice simplifies the presentation of the kernel density and the proof of kernel density estimate (KDE) convergence (Theorem \ref{thm_KDE}).
	In general, the bandwidth parameter $\sigma_2$ which refers to the standard deviation used to define the Gaussians (as $\sigma$ appears in Defs. \ref{defn_upper_singletons} and \ref{defn_lower_cluster}) need not be equal to the splitting parameter $\sigma_1$ which determines which points are in $\mathscr{D}^u$ or $\mathscr{D}^\ell$ (as $\sigma$ appears in Eq. \eqref{eqn_split}). 
	Still, it is certainly desirable that $\sigma_1 = C \sigma_2$ when taking a limit of KDEs as the number of persistence diagrams grows to infinity (Theorem \ref{thm_KDE}).
	For a fixed kernel bandwidth $\sigma_2$, increasing $C$ (and thus $\sigma_1$) moves more features into the lower portion of the diagram.
	This choice may be useful in practice when underlying data are known to be noisy and more noise-related features are expected near the diagonal. 
	By the same token, for $\sigma_1 >> \sigma_2$, projecting the lower features onto the diagonal may lead to significant error in the approximation.
	On the other hand, taking $\sigma_1 << \sigma_2$ eliminates the computational benefit of splitting the diagram and is probably not useful in practice.
	For most cases, taking $\sigma_1 = \sigma_2$, is a reasonable balance between KDE accuracy and evaluation computation.
\end{remark}
\vspace{-5mm}
\begin{remark} \label{rmk_computation}
	Since the associations dictated by $\gamma \in I(j,N_i)$ in Thm. \ref{thm_construction} are known a priori, the calculation is embarrassingly parallelizable, and computation can be made rapid even for the evaluation of the global density function associated with a diagram $D$ with many features (see, e.g., Fig. \ref{fig_PD_UD_wob} (Right) in the supplementary materials).
	Nevertheless, the density described in Eq. \eqref{eqn_construction} is well organized for approximate evaluation.
	While Eq. \eqref{eqn_construction} is sufficient for set integration, it is not symmetric under permutations of the inputs $\xi_i$, and consequently does not represent the density at a set $\LC \xi_1,..., \xi_N \RC$.
	A symmetric version is desirable for methods such as maximum likelihood or mode estimation \citep{bayesbook_2014}. 
	Indeed, a symmetric pdf is available by summing over $\Pi_N$ as per Eq. \eqref{eqn_global_pdf} to obtain the set derivative of the belief function.
	At no loss of accuracy, the large sum over $\Pi_N$ need not range over all permutations and one may instead sort over compositions $\beta \circ \pi$ for $\beta \in I(j,N)$ and $\pi \in \Pi_j$ for each $j \in \LC 0,...,\Ln D^u \Rn \RC$.  
	Since one expects that $\Ln D^u \Rn = N_u << N$, this reorganization significantly diminishes the number of computations. 
\end{remark}

\begin{figure}[h]
	\centering{\includegraphics[width=0.3\textwidth]{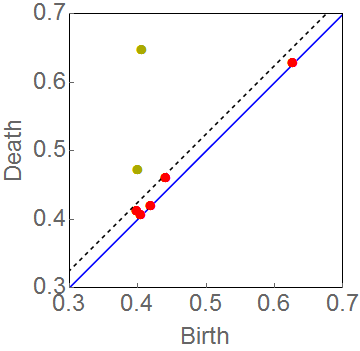} \hspace{3mm}
	\includegraphics[width=0.3\textwidth]{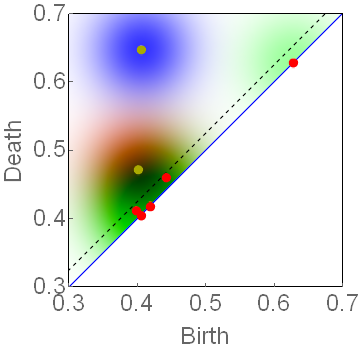}}
	\caption{\emph{Left:} A persistence diagram split according to Eq. \eqref{eqn_split}.
	The dashed black line, $d=b+\sigma$, separates the diagram into the red upper points of $\mathscr{D}^u$ and the yellow lower points of $\mathscr{D}^\ell$.
	\emph{Right:} The red and blue gradients represent the upper singleton densities $p^{(1)}$ and $p^{(2)}$ given by Eq. \eqref{eqn_mod_normal}. 
	The green gradient represents the lower density $p^\ell$ defined in Eq \ref{eqn_lower_density}. 
	While each of these densities is defined on the wedge $W \subset \R^2$, the global kernel in Eq. \eqref{eqn_construction} is defined on $\bigcup_N W^N$ for each input-cardinality $N$.}
	\label{heat}
\end{figure}

Since the kernel density is a probability density function for a random persistence diagram, it has an associated probability hypothesis density (See Defn. \ref{defn_PHD}).

\begin{cor} \label{cor_kernel_PHD}
	Fix a center persistence diagram $\mathscr{D}$ and bandwidth $\sigma>0$.
	Split $\mathscr{D}$ into $\mathscr{D}^\ell$ and $\mathscr{D}^u$ according to Eq. \eqref{eqn_split}.
	Define $D^\ell$ with global pdf from Eq. \eqref{eqn_lower_dist}, and $D^u$ with global pdf from Eq. \eqref{eqn_combination}.
	Treating the random persistence diagrams $D^u$ and $D^\ell$ as independent, the probability hypothesis density (PHD) associated with the kernel density centered at $\mathscr{D}$ with bandwidth $\sigma$ of Thm. \ref{thm_construction} is given by
	\begin{equation} \label{eqn_kernel_PHD}
	K_{\sigma,PHD}(\xi, \mathscr{D}) = N_\ell \, p^\ell(\xi) 
	+ \sum_{j=1}^{N_u} q^{(j)} p^{(j)}(\xi),
	\end{equation}
	where the feature $\xi$ is the input and $N_u = \Ln \mathscr{D}^u \Rn$ and $N_\ell = \Ln \mathscr{D}^\ell\Rn$ depend on both $\mathscr{D}$ and $\sigma$. 
	Here each $p^{(j)}$ refers to the modified Gaussian pdf as shown in Eq. \eqref{eqn_mod_normal} for its matching singleton feature $\xi_j$ in $D^u$, $q^{(j)}$ given by \eqref{eqn_nonempty} is the probability each singleton is present, and the lower density $p^\ell$ is given by Eq. \eqref{eqn_lower_density}.
\end{cor}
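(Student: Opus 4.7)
The plan is to compute the PHD via its intensity characterization rather than by directly grinding the set-derivative product rule into the explicit global pdf from Eq. \eqref{eqn_construction}. By Defn. \ref{defn_PHD} (and the uniqueness statement attributed there to \citep{Mahler}), the PHD $F_D$ of a random persistence diagram $D$ is the unique function, up to Lebesgue-null sets, satisfying $\E|D \cap U| = \int_U F_D(\xi)\,d\xi$ for every Borel $U \subset \W$. Since $D = D^u \cup D^\ell$ with $D^u$ and $D^\ell$ independent, the count decomposes as $|D \cap U| = |D^u \cap U| + |D^\ell \cap U|$, and linearity of expectation gives $F_D = F_{D^u} + F_{D^\ell}$ a.e.\ It therefore suffices to compute each PHD separately, sum them, and appeal to uniqueness to identify the result with $K_{\sigma,PHD}$.

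For the upper part, Defn. \ref{defn_upper_singletons} realizes $D^u = \bigcup_{j=1}^{N_u} D^j$ as a union of independent singletons, where $D^j$ is nonempty with probability $q^{(j)}$ and, conditional on being nonempty, is distributed on $W$ with density $p^{(j)}$. Hence $\E|D^j \cap U| = q^{(j)} \int_U p^{(j)}(\xi)\,d\xi$, and summing in $j$ (again by linearity) identifies $F_{D^u}(\xi) = \sum_{j=1}^{N_u} q^{(j)} p^{(j)}(\xi)$. For the lower part, Defn. \ref{defn_lower_cluster} produces $D^\ell$ by first drawing a cardinality $N \sim \nu$ with mean $\E N = N_\ell$ and then taking $N$ i.i.d.\ samples from $p^\ell$. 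Conditioning on $N$ gives $\E(|D^\ell \cap U| \mid N) = N \int_U p^\ell(\xi)\,d\xi$, and the tower property then yields $F_{D^\ell}(\xi) = N_\ell\, p^\ell(\xi)$. Adding the two contributions reproduces Eq. \eqref{eqn_kernel_PHD}.

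The argument is essentially mechanical; the main subtlety is justifying that PHDs add under independent unions of random persistence diagrams, which is most cleanly handled via the expectation characterization rather than by invoking the set-derivative product rule on $\beta_D = \beta_{D^u}\beta_{D^\ell}$ evaluated at $\{\xi\}$. An alternative route would substitute the explicit kernel from Eq. \eqref{eqn_construction} into the set integral of Eq. \eqref{eqn_PHD} and collapse the combinatorial sum over $\gamma \in I(j,N_u)$ together with the Poisson-type sum over the cardinality of $D^\ell$; however, because $\E|D^\ell| = N_\ell$ and each singleton contributes a Bernoulli$(q^{(j)})$-weighted density, the intensity viewpoint delivers Eq. \eqref{eqn_kernel_PHD} with negligible combinatorial overhead.
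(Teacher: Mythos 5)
Your proposal is correct and follows essentially the same route as the paper: the authors likewise invoke the uniqueness of the PHD via its expected-count characterization, use additivity over the independent union $D = D^u \cup D^\ell$, and weight each singleton density by $q^{(j)}$ and the lower density by the mean cardinality $N_\ell$. Your write-up simply fills in the conditioning and tower-property details that the paper leaves implicit.
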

\begin{proof}
	The PHD is uniquely defined by its integral over a region $U$, which yields the expected number of points in the region.
	Consequently, the independent upper and lower random draws which build the kernel contribute additively to the PHD.
	Within the sum, each singleton density $p^{(j)}$ is weighted by the chance for $D^j$ to be present, $q^{(j)}$ and the lower density $p^\ell$ is weighted according to the mean draw cardinality, which was chosen to be $\Ln \mathscr{D}^\ell \Rn$. 
\end{proof}

Notice that in Cor. \ref{cor_kernel_PHD}, the input for the PHD is a single feature $\xi$ as opposed to a list of features $Z = \LC \xi_1, ..., \xi_N \RC$ for the global kernel in Thm. \ref{thm_construction}.
Furthermore, Thm. \ref{thm_construction} extends to the analogue result for a center persistence diagram with features of varied degree of homology.

\begin{cor} \label{cor_general_kernel}
Consider a persistence diagram $\mathscr{D} = \bigcup_{k=0}^{\dim-1} \mathscr{D}_k \times \LC k \RC$ split according to the degrees of homology with associated random persistence diagrams $D_k$ defined according to Eq. \eqref{eqn_construction} for each center diagram $\mathscr{D}_k$.
Treating each $D_k$ as independent, the full global pdf for $D = \bigcup D_k$ centered at $\mathscr{D}$ with bandwidth $\sigma$ is given by
\begin{equation} \label{eqn_general_kernel}
K_\sigma(Z,\mathscr{D}) = \Lambda(N) \prod_{k=0}^{\dim-1} K_\sigma(Z_k,\mathscr{D}_k),
\end{equation}
where $Z = \bigcup_{k=0}^{\dim-1} Z_k \times \LC k \RC \subset \W$ with each $Z_k \subset W$ of cardinality $\Ln Z_k \Rn = N_k$ within the multi-index $N = (N_0,...,N_{\dim-1})$ and 
$$ \Lambda(N) = \frac{N!}{\Ln N \Rn !} := \frac{\prod N_k!}{\LP \sum N_k\RP !}.$$
\end{cor}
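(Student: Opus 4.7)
The plan is to mirror the proof of Thm. \ref{thm_construction}: factorize the belief function using independence of the $D_k$, apply the product rule for set derivatives, and then rearrange via Eq. \eqref{eqn_global_pdf}. The key structural observation is that the multiwedge $\W = \bigcup_{k=0}^{\dim-1} W \times \LC k \RC$ is a topologically disjoint union across homology degrees, so that $\beta_D(S) = \prod_k \beta_{D_k}(S \cap (W \times \LC k \RC))$ for each Borel $S \subseteq \W$.

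Next I would compute the iterated set derivative of $\beta_D$ at $\emptyset$ with respect to all $\Ln N \Rn$ input features $\xi_1, \dots, \xi_{\Ln N \Rn}$, each carrying an intrinsic homology label $k_i$. A single set derivative $\delta \beta_{D_k} / \delta \xi_i (\emptyset)$ vanishes whenever $k_i \neq k$, because for sufficiently large $n$ the ball $B(\xi_i, 1/n)$ is disjoint from $W \times \LC k \RC$, so $\beta_{D_k}$ is constant on the shrinking neighborhoods under consideration. Hence the product rule for set derivatives collapses to the unique surviving assignment,
$$ \frac{\delta^{\Ln N \Rn} \beta_D}{\delta \xi_1 \cdots \delta \xi_{\Ln N \Rn}}(\emptyset) = \prod_{k=0}^{\dim-1} \frac{\delta^{N_k} \beta_{D_k}}{\prod_{i : k_i = k} \delta \xi_i}(\emptyset) = \prod_{k=0}^{\dim-1} \sum_{\pi_k \in \Pi_{N_k}} K_\sigma(\pi_k \cdot Z_k, \mathscr{D}_k), $$
where the second equality applies Eq. \eqref{eqn_global_pdf} together with Thm. \ref{thm_construction} to each $D_k$ individually.

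Finally, I would convert the product of sums above into a single sum indexed by $\Pi_{\Ln N \Rn}$ matching the shape of Eq. \eqref{eqn_global_pdf} for $D$. Each $\pi \in \Pi_{\Ln N \Rn}$ induces internal permutations $\pi|_k \in \Pi_{N_k}$ by restricting to the output positions that land in each homology class, and for any target $(\pi_k)_k \in \prod_k \Pi_{N_k}$ the number of preimages equals the multinomial coefficient $\Ln N \Rn ! / \prod_k N_k!$ (choosing which output slots carry each homology label). Therefore
$$ \prod_{k} \sum_{\pi_k \in \Pi_{N_k}} K_\sigma(\pi_k \cdot Z_k, \mathscr{D}_k) = \frac{\prod_k N_k!}{\Ln N \Rn !} \sum_{\pi \in \Pi_{\Ln N \Rn}} \prod_{k} K_\sigma((\pi \cdot Z)_k, \mathscr{D}_k), $$
and reading the right-hand side as $\sum_\pi K_\sigma(\pi \cdot Z, \mathscr{D})$ via Eq. \eqref{eqn_global_pdf} identifies $K_\sigma(Z, \mathscr{D}) = \Lambda(N) \prod_k K_\sigma(Z_k, \mathscr{D}_k)$ as claimed.

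The principal non-routine step is this last multinomial fiber count, which produces exactly the prefactor $\Lambda(N)$; the vanishing of cross-homology set derivatives is immediate from the disjoint-union structure of $\W$, and everything else is algebraic bookkeeping paralleling the derivation of Eq. \eqref{eqn_construction}.
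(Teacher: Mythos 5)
Your proposal is correct and follows essentially the same route as the paper: factor the belief function by independence of the $D_k$, observe that cross-homology set derivatives vanish so the product rule collapses to the single term $\prod_k \frac{\delta \beta_{D_k}}{\delta Z_k}(\emptyset)$, and then count the $\Ln N \Rn!/\prod_k N_k!$-fold redundancy of permutations to extract the prefactor $\Lambda(N)$. The only difference is that you spell out the multinomial fiber count explicitly, which the paper compresses into one sentence.
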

\begin{proof}
The result follows immediately from taking set derivatives of the full belief function $\beta_D(S) = \prod_k \beta_{D_k}(S)$.
In particular, the set derivatives $\frac{\delta\beta_{D_k}}{\delta Z}(\emptyset)$ are zero unless $Z \subset \mathcal{W}_k$. 
Thus, the product rule leaves only the single term
$\frac{\delta\beta_{D}}{\delta Z}(\emptyset) = \prod_{k=0}^{\dim-1} \frac{\delta\beta_{D_k}}{\delta Z_k}(\emptyset)$.
In turn, each kernel global pdf $K_\sigma(Z_k,\mathscr{D}_k)$ is related to the associated belief function derivative by a sum over permutations $\Pi_{N_k}$ (see Eq. \eqref{eqn_global_pdf}). 
Compositions of these permutations are $N_k!$-fold redundant against the $\Ln N \Rn !$ permutations in $\Pi_{\Ln N \Rn}$, yielding the coefficient $\Lambda(N)$.
\end{proof}

Next, to prove the convergence (to the target distribution) of the kernel density estimate defined via the kernel established in Thm. \ref{thm_construction}, we consider persistence diagrams $\LC \mathscr{D}_i \RC_{i=1}^n$ which are i.i.d. sampled from a target distribution with global pdf $f$.
Toward this end, we require the following assumptions on $f$:
\begin{align*}
(A&1) \,\, f(Z) = 0 \textrm{ for } \Ln Z \Rn > M \in \N \textrm{ (bounded cardinality).}\\
(A&2) \,\, \textrm{The local density } f_N:\mathcal{W}_k^N \goto \R \textrm{ is bounded for each } N \in \LC 1,...,M \RC\!. \\
(A&3) \,\, \textrm{There exists } C_N >0 \textrm{ so that } f(\xi_1,...,\xi_N) \leq C_N \LN (\xi_1,...,\xi_N) \RN^{-2N} \textrm{ for each } N \in \LC 1,...,M \RC.
\end{align*}
	
The assumptions (A1), (A2), and (A3) describe conditions on the target random persistence diagram pdf.
It is important that these assumptions also hold for a random persistence diagram associated with typical (random) underlying datasets. 
For example, $(A1)$ trivially holds for underlying data in $\R^{\dim}$ of bounded cardinality. 
The conditions $(A2)$ and $(A3)$ hold for underlying data sampled from a compact set $E \subset \R^{\dim}$ perturbed by Gaussian noise.
The work \citep{tda_crackle}(see Cor. 2.3 and Thm 2.6 therein) describes the persistent homology of noise, and describes a `core' neighborhood.
Specifically for Gaussian noise, features are retained in the `core', but then extreme decay occurs for features of arbitrary degree outside the `core'.
Intuitively, by bounding death values by the diameter of the underlying dataset, one expects that the decay will be at worst a polynomial times Gaussian decay, which is sufficient for (A3).

The following theorem shows that the kernel density estimate converges to the true global pdf of a random persistence diagram as the number of persistence diagrams increases.
The pdf tracks not only the birth and death of features, but also their prevalence.
In particular, the persistence diagram pdf tied to a random dataset can determine which geometric features are stable regardless of their persistence.
The proof of this theorem is delegated to the supplementary materials.

\begin{theorem} \label{thm_KDE}
	Consider a random persistence diagram global pdf $f$ satisfying assumptions $(A1)$-$(A3)$. 
	Define the kernel $K_\sigma(Z,\mathscr{D})$ according to Thm. \ref{thm_construction} and consider the kernel density estimate $\hat{f}(Z) = \frac{1}{n} \sum_{i=1}^n K_\sigma(Z,\mathscr{D}_i)$, with centers $\mathscr{D}_i$ sampled i.i.d. according to global pdf $f$ and bandwidth $\sigma = O(n^{-\alpha})$ chosen with $0 < \alpha < \alpha_{2M}$.
	Then, as $n \goto \infty$, $\hat f \goto f$ uniformly on compact subsets of $W$.
\end{theorem}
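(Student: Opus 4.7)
Fix $N \in \LC 1,...,M \RC$ and a compact subset $K \subset W^N$, and reduce the theorem to proving $\sup_{Z \in K} |\hat f_N(Z) - f_N(Z)| \goto 0$ layer by layer, via the classical decomposition
\begin{equation*}
\hat f_N(Z) - f_N(Z) = \LP \hat f_N(Z) - \E \hat f_N(Z) \RP + \LP \E \hat f_N(Z) - f_N(Z) \RP.
\end{equation*}
A key geometric observation: since $K \subset W = \LC d>b \geq 0 \RC$ is compact, there exists $\rho>0$ with $d_i - b_i \geq \rho$ for every coordinate $(b_i,d_i)$ of every $Z \in K$. Thus for $\sigma < \rho$, every input feature lies strictly above the splitting line $d = b+\sigma$, which simplifies both the bias and the variance analysis.

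\textbf{Bias.} Writing $\E \hat f_N(Z) = \int K_\sigma(Z,\mathscr{D}) f(\mathscr{D})\,\delta \mathscr{D}$ via Defn. \ref{defn_set_integral} and unfolding Thm. \ref{thm_construction}, the expression becomes a sum over the cardinality $m = \Ln \mathscr{D} \Rn \leq M$ and over matchings $\gamma$ between upper singletons of $\mathscr{D}$ and the inputs. By the geometric observation, for $Z \in K$ and $\sigma$ small, the factors $p^\ell(\xi_k)$ in Eq. \eqref{eqn_construction} evaluated at an input $\xi_k$ are exponentially small, since their Gaussian peaks lie within $\sigma$ of the diagonal, at distance at least $\rho/2$ from $\xi_k$. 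Hence the only terms of $K_\sigma(Z,\mathscr{D})$ that contribute to leading order have $j = N$, i.e., every input is matched to an upper singleton of $\mathscr{D}$. For such terms, the denominators of Eq. \eqref{eqn_mod_normal}, the weights $\mathcal{Q}(\gamma)$, and the survival probabilities $q^{(\gamma(k))}$ all tend to $1$, so $K_\sigma(Z, \mathscr{D})$ collapses to a product of $N$ ordinary two-dimensional Gaussians centered at the matched features of $\mathscr{D}$. A standard approximation-of-the-identity argument against the local density $f_m$ then recovers $f_N(Z)$, with error tending to zero uniformly in $Z \in K$; assumption (A3) supplies tail control and (A2) bounds the remainder near $Z$.

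\textbf{Variance and uniform deviation.} The sup-norm of $K_\sigma(Z,\cdot)$ is of order $\sigma^{-2N}$ (each of the $N$ two-dimensional Gaussians contributes $\sigma^{-2}$), and $\int K_\sigma(Z,\mathscr{D})^2 f(\mathscr{D})\,\delta\mathscr{D} = O(\sigma^{-2N})$ under (A2). Hence $\mathrm{Var}\LP \hat f_N(Z)\RP = O\LP (n\sigma^{2N})^{-1}\RP$, and Chebyshev gives pointwise convergence $\hat f_N(Z) - \E \hat f_N(Z) \goto 0$ in probability. To upgrade to uniform convergence on $K$, I would cover $K$ by an $\epsilon$-net of polynomial-in-$\sigma^{-1}$ size, apply Bernstein's inequality at each center (using the sup-norm bound on $K_\sigma$), and extend to all of $K$ via the Lipschitz estimate $\LN \nabla_Z K_\sigma(\cdot,\mathscr{D}) \RN_\infty = O(\sigma^{-2N-1})$, paying only logarithmic factors.

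\textbf{Rate balancing and anticipated obstacle.} With $\sigma = n^{-\alpha}$, the variance bound decays provided $1 - 2N\alpha > 0$; imposing this for every cardinality $N \leq M$, together with the logarithmic penalty from the covering/Lipschitz step, identifies the stated threshold $\alpha_{2M}$. The bias vanishes for \emph{any} $\sigma \goto 0$, so it is not rate-limiting. The principal technical obstacle I anticipate lies in the bias step: the kernel in Eq. \eqref{eqn_construction} is a weighted sum over all matchings $\gamma \in I(j,N_u)$ with weights $\mathcal{Q}(\gamma)$, and reconciling these combinatorial weights with the $1/m!$ prefactors of the set integral (Defn. \ref{defn_set_integral}) and the asymmetric-versus-symmetric conventions of Defn. \ref{defn_global_pdf} (cf. Remark \ref{rmk_unique_symmetric}) requires careful bookkeeping to verify that the leading-order integral recovers exactly $f_N(Z)$ rather than a combinatorial multiple of it.
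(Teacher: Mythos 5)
Your overall architecture (bias--variance split, the observation that compactness of $K \subset W^N$ bounds every input's persistence below by $\rho > 0$ so that the $p^\ell$ factors are $O(\sigma^{-2}e^{-\rho^2/4\sigma^2})$, the variance rate $O((n\sigma^{2N})^{-1})$ forcing $\alpha < \alpha_{2M}$) matches the paper's proof in spirit, and you correctly flagged the bias step as the danger zone. But there is a genuine gap there, and it is not the $1/m!$ bookkeeping you anticipated: it is the terms of Eq. \eqref{eqn_construction} in which some upper singleton of the \emph{center} diagram is left unmatched. Your claim that for the $j = N$ terms ``the weights $\mathcal{Q}(\gamma)$ \dots all tend to $1$'' is only true when $\gamma$ is surjective, i.e. $N = N_u$. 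When $N < N_u$, the weight $\mathcal{Q}(\gamma)$ carries a factor $\prod_{j \notin \mathrm{range}(\gamma)}(1 - q^{(j)})$ for the unmatched upper features (Eq. \eqref{eqn_Qstar}), and $(1-q^{(j)})$ does \emph{not} tend to zero uniformly over sampled centers: a center $\mathscr{D}_i$ drawn from $f$ can have an upper feature whose persistence barely exceeds $\sigma$, for which $1-q^{(j)}$ is of order one while the matched Gaussian product is still $O(\sigma^{-2N})$ at its peak. Your compactness observation kills the $p^\ell$ factors evaluated at the inputs but says nothing about these survival factors, which depend on the random center, not on $Z$.

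These contributions (the paper's $C_i$ terms, and the related mismatch between the upper cardinality $N_u$ and the total cardinality $\Ln \mathscr{D}_i \Rn$ that contaminates the leading $j = N_i$ terms) are negligible only \emph{in expectation over the random center}, and proving that requires a lemma you do not have: under (A2)--(A3) the expected number of features of $\mathscr{D} \sim f$ in the band $\LC 0 < d - b < \sigma \RC$ is $O(\sigma)$ (the paper's Lemma A.1, proved by integrating the probability hypothesis density over the band). From this one deduces $\E^f[\mathcal{Q}^*(\gamma)] = O(\sigma)$ for every non-surjective $\gamma$ and $\E^f[\nu(j)] = O(\sigma)$ for $j \neq 0$, which is what lets the unmatched-singleton terms and the cardinality-mismatch terms be discarded. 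Without this band-diagonal estimate your bias argument does not close; with it, the rest of your plan (approximation of the identity for the fully matched terms, covering-net uniformity for the stochastic part) goes through and is, if anything, more careful about the uniform-deviation step than the paper's own treatment.
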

\begin{remark} \label{rmk_bandwidth}
	The value of $\alpha_{2M}$ is inherited from bandwidth selection for $2M$-dimensional kernel density estimates \citep{KDE_book}.
	While the scaling of the bandwidth in the limit is determined by the maximum cardinality $M$ (and thus, the largest dimension of the local pdfs), choosing a bandwidth for a specific sample is an important step in applying kernel density estimation.
	If the bandwidth is too narrow, the estimate is overfitted and potentially biased; 
	if the bandwidth is too large, the estimate will be oversmoothed, resulting in accuracy loss.
	Several methods for bandwidth selection in multivariate kernel estimation are discussed in \citep{silverman}. 
	As a general rule of thumb, \citep{silverman} recommends choosing the bandwidth as $\sigma_{opt} = A(K) n^{-1/({\dim}+4)}$, where $n$ is the sample size (i.e., the number of persistence diagrams), ${\dim}$ is the dimension, and $A(K)$ is a constant depending on the kernel, $K$.
	In particular, one may choose $\alpha \approxeq 1/(2M+4)$ as an unbiased estimator for all local pdfs with cardinalities $m \leq M$ \citep{KDE_book}.
	Silverman's rule of thumb works best for distributions which are nearly Gaussian;
	for more general distributions, the bandwidth may be chosen empirically.
\end{remark}

\subsection{Examples} \label{subsect:Examples}
Here we provide detailed examples of the kernel density and kernel density estimation of an unknown pdf.
For simplicity, we restrict to a single degree of homology, say $k=1$. 
Due to the intrinsic high dimension of the kernel, we present contour plots for slices of the kernel density.
Specifically, for inputs $\LP (b_1,d_1),...,(b_N,d_N) \RP$, we consider the kernel density evaluated at $(b_1,d_1) \in W$ with $(b_i,d_i)$ fixed for $i \geq 2$.
For clarity, the unique symmetric pdf $f_{sym}(\xi_1,...,\xi_N) = \frac{1}{N!} \sum_{\pi \in \Pi_N} f(\xi_{\pi(1)},...,\xi_{\pi(N)})$ is used in the contour plots (see Remark \ref{rmk_unique_symmetric}). 
For explicit computation, we choose the probability mass function
\begin{equation} \label{eqn_pmf_card}
\nu(N) = \max \LC \frac{N_\ell + 1 - \Ln N_\ell - N \Rn}{(N_\ell + 1)^2}, 0 \RC
\end{equation}
when evaluating the lower density in Eq. \eqref{eqn_lower_dist}, where $N_\ell = \Ln \mathscr{D}^\ell \Rn$ is the lower cardinality of the center diagram. 
This probability mass function is chosen to satisfy the requirements of Defn. \ref{defn_lower_cluster}, and specifically has the property that $\nu(N) > 0$ for $0 \leq N \leq 2\Ln \mathscr{D}^\ell \Rn$. 

\begin{example} \label{ex1} \rm
	Consider the center persistence diagram $\mathscr{D} = \LC (1,3), (2,4), (1,1.3),(3,3.2) \RC \subset W$ and bandwidth $\sigma = 1/2$. 
	We construct the associated kernel density $K_\sigma(Z,\mathscr{D})$ according to Thm. \ref{thm_construction} and follow with some plots and analysis of the kernel density.
	The random persistence diagram $D$ associated with the kernel density $K_\sigma(Z,\mathscr{D})$ has a variable number of features $N = \Ln D \Rn$;
	consequently, the input diagram $Z = \LC\xi_1,...,\xi_N\RC$ must have variable length and therefore the kernel density has local definitions (see Rmk. \ref{rmk_local_vs_global}) on $W^N$ for each possible input cardinality $N$.
	
	Since each modified Gaussian $p^{(j)}$ (Defn. \ref{defn_upper_singletons}) and the lower density $p^\ell$ (Defn. \ref{defn_lower_cluster}) integrate to 1 over the wedge $W$, an expression for the probability mass function (pmf) $\P[\Ln D \Rn = N]$ can be expressed solely in terms of $\nu$ and $q^{(j)}$:
	\begin{equation}
	\begin{split}
	\P[\Ln D \Rn = N] &= \LB q^{(1)}q^{(2)}\RB \nu(N-2) \\
	&+ \LB q^{(1)}\LP 1-q^{(2)}\RP + q^{(2)}\LP 1-q^{(1)}\RP \RB \nu(N-1) \\
	&+ \LB\LP 1-q^{(1)}\RP \LP 1-q^{(2)}\RP\RB \nu(N)
	\end{split}
	\end{equation}
	The plot of this pmf is shown in Fig. \ref{pmf}.
	Recall that $D = D^u \cup D^\ell$, so that $\Ln D \Rn = \Ln D^u \Rn + \Ln D^\ell \Rn$;
	since $q^{(j)} \approx 1$ for $j=1,2$, $\Ln D^u \Rn = 2$ with high probability and the pmf $\P[\Ln D \Rn = N]$ is nearly the pmf for $\Ln D^\ell \Rn$, $\nu$, shifted up by 2 units.
	Fig. \ref{pmf} suggests that understanding the kernel density requires investigation into higher cardinality inputs.
	In general, it is important to consider input diagrams $Z$ with $\Ln Z \Rn \geq \Ln \mathscr{D}^u \Rn$.
	
	\begin{figure}[h!]
		\begin{center}
			\includegraphics[scale = 0.25]{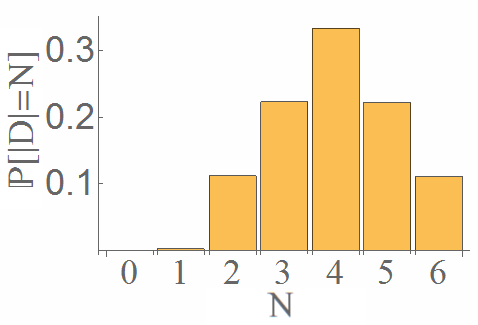}
		\end{center}
		\vspace{-4mm}
		\caption{Cardinality probabilities $\P[ \Ln D \Rn = N]$ for random diagram $D$ distributed according to global pdf $K_\sigma(\cdot,\mathscr{D})$ in Ex. \ref{ex1}.
		In general, we have that $0 \leq \Ln D^u \Rn \leq \Ln \mathscr{D}^u \Rn$ and according to Eq. \eqref{eqn_pmf_card}, $\nu(N) \neq 0$ for $0 \leq N \leq 2 \Ln \mathscr{D}^\ell \Rn$.
		Thus, the cardinality $\Ln D \Rn = \Ln D^u \Rn + \Ln D^\ell \Rn$ takes on values between $0$ and $6 = \Ln \mathscr{D}^u \Rn + 2 \Ln \mathscr{D}^\ell \Rn$.} \label{pmf} 
	\end{figure}

	First, we describe the random diagram associated to the lower features $\mathscr{D}^\ell = \LC (1,1.3), (3,3.2) \RC$ of the center diagram $\mathscr{D}$. 
	The lower random diagram $D^\ell$ is described in Defn. \ref{defn_lower_cluster} according to a probability mass function (pmf) $\nu$ for the cardinality of $D^\ell$ and a single probability density $p^\ell(b,d)$ for the subsequent features' locations in the wedge $W$. 
	The pmf $\nu$ is defined according to Eq. \eqref{eqn_pmf_card} with $N_\ell = 2$; that is, $\nu(\LC 0,1,2,3,4 \RC) = \LC 1/9, 2/9, 3/9, 2/9, 1/9 \RC$ respectively, and zero otherwise. 
	Following Defn. \ref{defn_lower_cluster}, we project the features of $\mathscr{D}^\ell$ onto the diagonal to obtain $\LC (1.15,1.15), (3.1, 3.1) \RC$.
	Relying on Eq. \eqref{eqn_lower_density}, the resulting lower density is given by 
	\begin{equation} \label{eqn_ex_pl}
		p^\ell(b,d) = \frac{2}{\pi} \LB e^{- \LP (b-1.15)^2 + (d-1.15)^2 \RP} + e^{-\LP (b-3.1)^2 + (d-3.1)^2 \RP} \RB.
	\end{equation}
	restricted to the wedge $W$.
	The coefficient $\frac{2}{\pi}$ is obtained by a direct substitution into Eq. \eqref{eqn_lower_density}.

	Due to the flexible input cardinality, the kernel will be expressed and plotted separately for different input cardinalities.
	For brevity, we present the local kernels on $W^N \subset \R^{2N}$ for cardinalities $N = 1, 2, 3$.
	First, we consider the probability hypothesis density (or PHD, as defined in Eq. \eqref{eqn_PHD}) along with the kernel density evaluated at a single input feature in Fig. \ref{PHD_vs_1F}.
	Recall that the integral of the PHD over a region $U$ yields the expected number of features in $U$ (see Defn \ref{defn_PHD}).
	The kernel's corresponding PHD is a sum of Gaussians as described in Cor. \ref{cor_kernel_PHD}.
	\begin{equation} \label{eqn_phd_exp}
		\begin{split}
		K_{\sigma,PHD}((b,d),\mathscr{D}) &= 2 p^\ell(b,d) + q^{(1)}p^{(1)}(b,d)+ q^{(2)}p^{(2)}(b,d) \\
		&= 1.273 \LP e^{-2 \LP (b-3.1)^2 + (d-3.1)^2 \RP}+e^{-2 \LP (b-1.15)^2 + (d-1.15)^2 \RP} \RP \\
		&\hspace{5mm} + 0.635 e^{-2 \LP (b-2)^2 + (d-4)^2 \RP} + 0.635 e^{-2 \LP (b-1)^2 + (d-3)^2 \RP}.
		\end{split}
	\end{equation}

	Next, for input of cardinality $\Ln Z \Rn = 1$, we obtain an easily viewable 2-dimensional distribution. 
	Thm. \ref{thm_construction} yields the following expression:
	\begin{equation} \label{eqn_pdf_f1}
		\begin{split}
		K_\sigma((b_1,d_1),\mathscr{D}) &= \nu(0)\LB(1 - q^{(2)})q^{(1)} p^{(1)}(b_1,d_1)
		+ (1 - q^{(1)})q^{(2)} p^{(2)}(b_1,d_1) \RB \\ 
		&\hspace{5mm} + \nu(1)\LB(1 - q^{(1)})(1 - q^{(2)})p^\ell(b_1,d_1)\RB. \\
		&= 7.74 \times 10^{-2} \LP e^{-2 \LP (b_1-2)^2 + (d_1-4)^2 \RP} + e^{-2 \LP (b_1-1)^2 + (d_1-3)^2 \RP} \RP. \\
		&\hspace{5mm} + 1.65 \times 10^{-4} p^\ell(b_1,d_1).
		\end{split}
	\end{equation}
	The kernel is treated as a global pdf as in Prop. \ref{defn_global_pdf} and Rmk. \ref{rmk_local_vs_global}; thus, this 2-D density is only a local density for the whole kernel.
	Each term is a weighted product of the combination of upper features considered (In order: $(2,4)$, $(1,3)$, or none.).
	Since the values of $q^{(j)}$ are very close to 1, terms which include the upper pdfs $p^{(j)}$ have much larger total mass. 

	Contour plots of the densities expressed in Eqs. \eqref{eqn_phd_exp} and \eqref{eqn_pdf_f1} (restricted to $W$) are respectively shown in Figs. \ref{PHD_vs_1F}(a) and \ref{PHD_vs_1F}(b).
	In Fig. \ref{PHD_vs_1F}(a), the PHD indicates that in general, as many features will appear near the diagonal as will appear near the upper features.
	According to the local kernel shown in Fig. \ref{PHD_vs_1F}(b), if only a single feature is present, this feature is far more likely to have long persistence. 
	Indeed, the kernel density is defined (see Eq. \eqref{eqn_construction}) so that the number of points near the diagonal is fluid (by our choice of $\nu$), whereas the probability of each feature in the upper diagram is nearly 1.
	In essence, this demonstrates that the kernel density naturally considers features with long persistence to be stable or prominent in density estimation.

	\begin{figure}[h!]
		\begin{center}
			\begin{multicols}{2}
				\includegraphics[scale = 0.24]{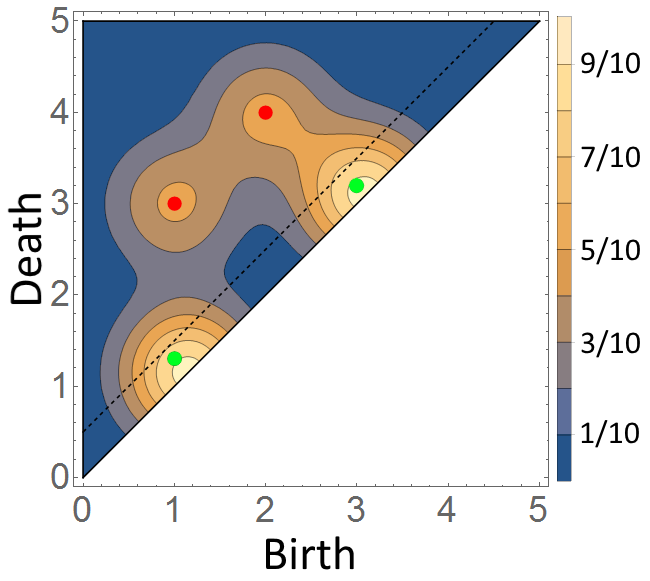} \\ (a) \\
				\includegraphics[scale = 0.24]{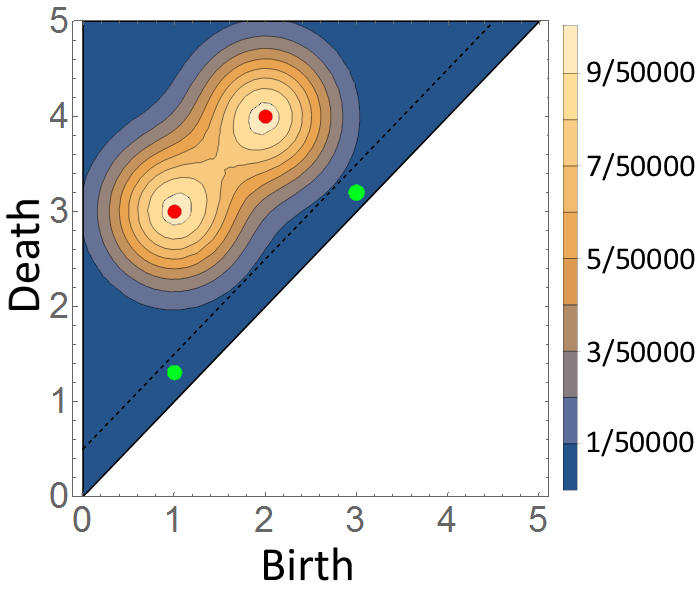} \\ (b)
			\end{multicols}
		\end{center}
		\vspace{-3mm}
		\caption{Contour maps for (a) the probability hypothesis density associated to the kernel density (Eq. \eqref{eqn_phd_exp}) and (b) the kernel density restricted to a single input feature (Eq. \eqref{eqn_pdf_f1}). 
		The center diagram is indicated by red (upper) and green (lower) points.
		Scale bars at the right of each plot indicate the range of probability density in each shaded region.} \label{PHD_vs_1F}
	\end{figure}

	Taking $Z = (\xi_1,\xi_2) = ((b_1, d_2), (b_2,d_2))$, we arrive at a more complex expression for the kernel density when considering 2 input features. 
	From Eq. \eqref{eqn_construction}, we obtain:
	\begin{equation} \label{eqn_pdf_f2}
		\begin{split}
		K_\sigma((\xi_1,\xi_2), \mathscr{D}) &= \nu(0)q^{(1)}q^{(2)}p^{(1)}(b_1,d_1)p^{(2)}(b_2,d_2) \\
		&\hspace{5mm} + \nu(1)\LB (1-q^{(2)})q^{(1)}p^{(1)}(b_1,d_1) + (1-q^{(1)})q^{(2)}p^{(2)}(b_1,d_1) \RB p^\ell(b_2,d_2) \\
		&\hspace{5mm} + \nu(2) (1-q^{(1)})(1-q^{(2)}) p^\ell(b_1,d_1)p^\ell(b_2,d_2) \\
		&= 4.5 \times 10^{-2} e^{-2 \LP (b_1-2)^2 + (d_1-4)^2 \RP}e^{-2 \LP (b_1-1)^2 + (d_1-3)^2 \RP} \\
		&\hspace{5mm} + 2.11 \times 10^{-4} \LB e^{-2 \LP (b_1-2)^2 + (d_1-4)^2 \RP} + e^{-2 \LP (b_1-1)^2 + (d_1-3)^2 \RP} \RB p^\ell(b_2,d_2) \\
		&\hspace{5mm} + 7.39 \times 10^{-7} p^\ell(b_1,d_1) p^\ell(b_2,d_2).
		\end{split}
	\end{equation}
	Notice that this local kernel also decomposes into terms which describe presence of upper features: one term for both, one term for each of the two upper features, and the last term has no upper features.
	Contour plots of slices of this local kernel are shown in Fig. \ref{2F};
	a general description of slicing is given in Rmk. \ref{rmk_slices}.
	
	\begin{remark} \label{rmk_slices}
		Slices are used to view local pdfs defined on a high dimensional space $W^N \subset \R^{2N}$ for $N > 1$. 
		To obtain these slices, one fixes features $(b_j,d_j) = (b_j',d_j')$ for $j = 2,...,N$, and views the density on the corresponding hyperplane $W \times \LC (b_2',d_2') \RC \times ... \times \LC (b_N',d_N') \RC \subset W^N$.
		In practice, the fixed features are chosen as modes of earlier (smaller $N$) slices in order to view important parts of the distribution.
		We also sum over possible permutations in order to view a slice of the symmetric pdf, as was done for Ex. \ref{ex_double_single}.
	\end{remark}
	
	If we consider the density evaluated along slices as $K_\sigma\LP \LP(b,d),(1,3)\RP, \mathscr{D} \RP$ or $K_\sigma\LP \LP(b,d),(2,4)\RP, \mathscr{D} \RP$ (Fig. \ref{2F} (a) or (b), respectively), the restricted plot is a Gaussian centered at the other upper feature. 
	If the fixed feature is instead close to the diagonal, as in Fig. \ref{2F} (c), the density slice is close to a mixture between the two upper Gaussians $p^{(1)}$ and $p^{(2)}$.

	\begin{figure}[h!]
		\begin{center}
			\begin{multicols}{3}
				\includegraphics[scale = 0.22]{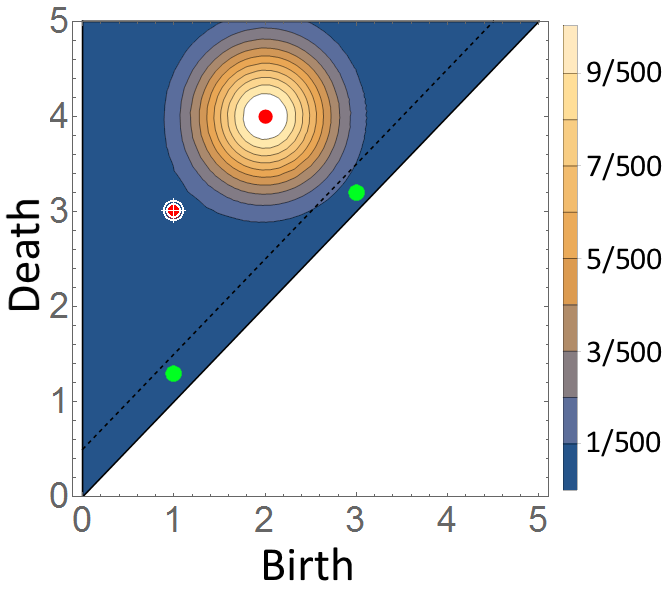} \\ (a) \\
				\includegraphics[scale = 0.22]{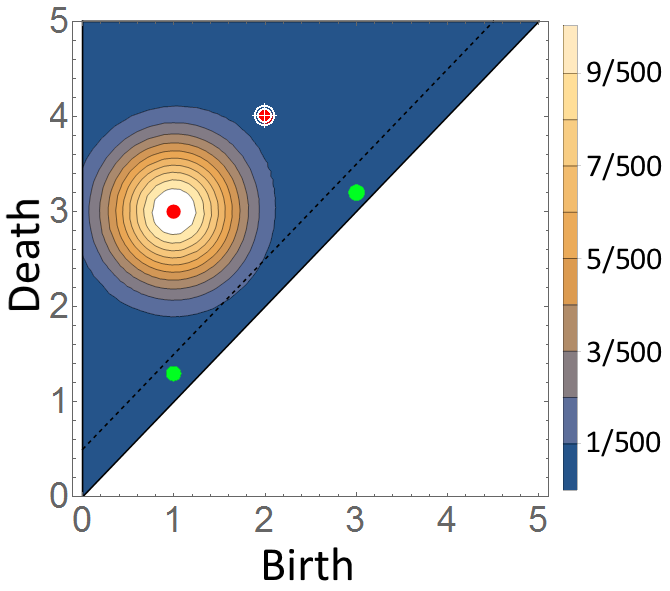} \\ (b) \\
				\includegraphics[scale = 0.22]{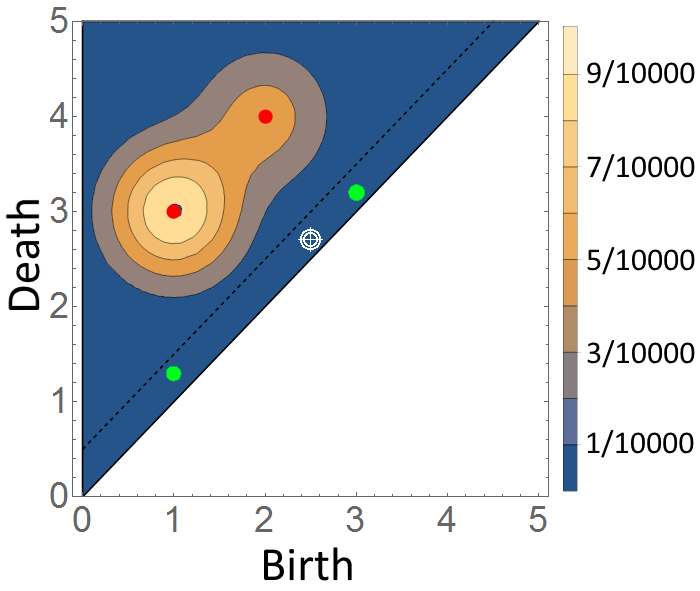} \\ (c)
			\end{multicols}
		\end{center}
		\vspace{-3mm}
		\caption{Contour maps for slices of the kernel density $K_\sigma((\xi,\xi_2'),\mathscr{D})$ with input cardinality 2.
		A single feature $\xi_2'$, indicated by white crosshairs, is fixed to restrict to a 2D subspace as follows: (a) $\xi_2' = (1,3)$ (b) $\xi_2' = (2,4)$ and (c) $\xi_2' = (2.5,2.7)$.
		The center diagram is indicated by red (upper) and green (lower) points.
		Scale bars at the right of each plot indicate the range of probability density in each shaded region.} \label{2F}
	\end{figure}

	In a similar fashion, we also express the kernel density with input cardinality $\Ln Z \Rn = 3$.
	Since there are only 2 upper features in $\mathscr{D}$, this and further expressions are not markedly more complicated than Eq. \eqref{eqn_pdf_f2}. 
	From Eq. \eqref{eqn_construction}, we obtain:
	\begin{equation} \label{eqn_pdf_f3}
		\begin{split}
		K_\sigma((\xi_1,\xi_2,\xi_3),\mathscr{D}) &= \nu(1)\LB q^{(1)}q^{(2)}p^{(1)}(b_1,d_1)p^{(2)}(b_2,d_2) \RB p^\ell(b_3,d_3) \\
		&\hspace{5mm} + \nu(2)(1-q^{(2)})q^{(1)}p^{(1)}(b_1,d_1) p^\ell(b_2,d_2) p^\ell(b_3,d_3)\\
		&\hspace{5mm} + \nu(2)(1-q^{(1)})q^{(2)}p^{(2)}(b_1,d_1) p^\ell(b_2,d_2) p^\ell(b_3,d_3) \\
		&\hspace{5mm} + \nu(3)(1-q^{(1)})(1-q^{(2)}) p^\ell(b_1,d_1) p^\ell(b_2,d_2) p^\ell(b_3,d_3). \\
		&= \, \, 9.01 \times 10^{-2} p^\ell(b_3,d_3) e^{ -2\LP (b_1-1)^2 +  (d_1-3)^2 \RP} e^{ -2\LP (b_2-2)^2 +  (d_2-4)^2 \RP } \\
		&\hspace{5mm} + 4.96 \times 10^{-4} p^\ell(b_2,d_2) p^\ell(b_3,d_3) e^{-2 \LP (b_1-2)^2 +  (d_1-4)^2 \RP} \\
		&\hspace{5mm} + 4.96 \times 10^{-4} p^\ell(b_2,d_2) p^\ell(b_3,d_3) e^{-2 \LP (b_1-1)^2 +  (d_1-3)^2 \RP} \\
		&\hspace{5mm} + 1.22 \times 10^{-6} p^\ell(b_1,d_1) p^\ell(b_2,d_2) p^\ell(b_3,d_3).
		\end{split}
	\end{equation}
	
	One may notice that Eq. \eqref{eqn_pdf_f3} has the same 4 terms as Eq. \eqref{eqn_pdf_f2}, but with another factor of $p^\ell$ in each term.
	Indeed, the local kernels for input cardinality $N = 4, 5, 6$ appear very similar as well, and with progressively more factors of $p^\ell$.
	Contour plot slices of this local kernel are shown in Fig. \ref{3F}, following Rmk. \ref{rmk_slices}.
	In this case, since the local pdf is defined in $W^3$, we must fix a pair of features in order to view a slice in $W \times \LC (b_2',d_2') \RC \times \LC (b_3',d_3') \RC$.
	In Eq. \eqref{eqn_pdf_f3}, the heaviest weighted term consists of both upper features' densities as well as the lower density $p^\ell(b_3,d_3)$.
	Indeed, Fig. \ref{3F}(a) shows the slice $K_\sigma(((b,d),(1,3),(2,4)),\mathscr{D})$, which leaves both upper features fixed, and the resulting slice is nearly proportional to the lower density $p^\ell$. 
	Fig. \ref{3F} (b) shows the slice $K_\sigma(((b,d),(1,3),(2.5,3.5)),\mathscr{D})$, which fixes one of the upper features of $\mathscr{D}$ as well as a feature of moderate persistence.
	This slice does not go through a mode of the local kernel, and so the geometry of the dataspace $W^3/\Pi_3$ makes the slice look multi-modal, depending on whether $(2.5,3.5)$ is assigned to $p^{(2)}$ or $p^\ell$.
	Other assignments have negligible mass. 
	Thus, Fig. \ref{3F} (b) resembles a mixture of these two densities.

	\begin{figure}[h!]
		\begin{center}
			\begin{multicols}{2}
				\includegraphics[scale = 0.22]{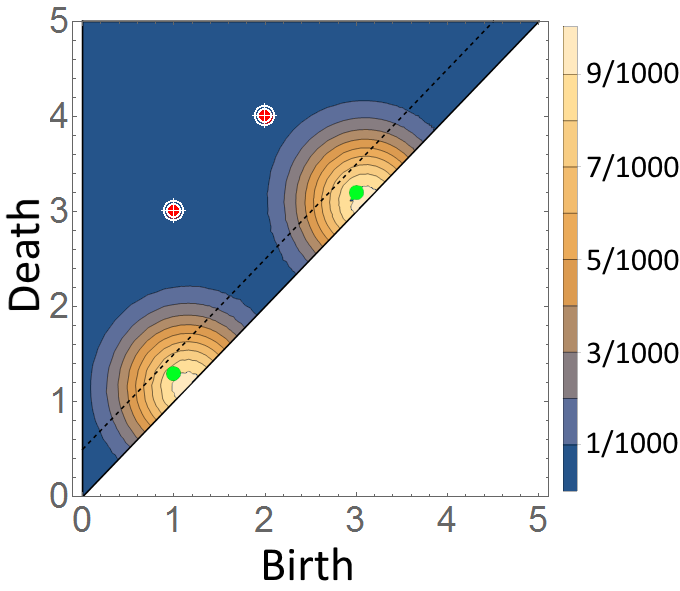} \\ (a) \\
				\includegraphics[scale = 0.22]{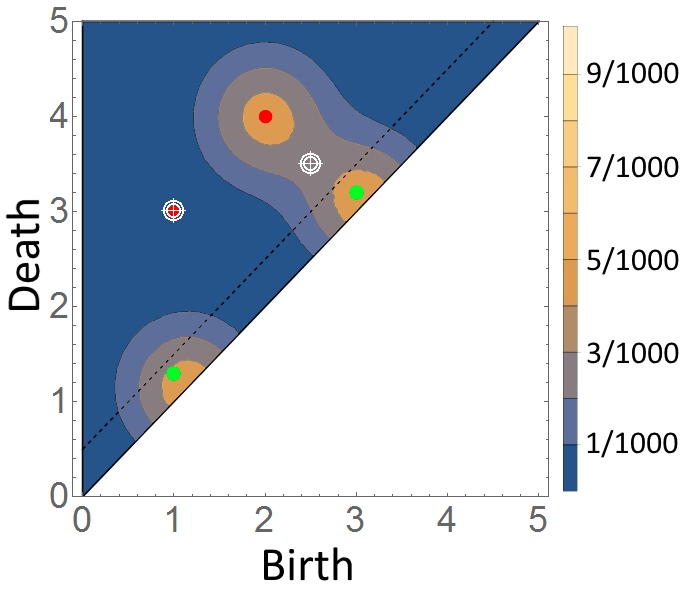} \\ (b)
			\end{multicols}
		\end{center}
		\vspace{-3mm}
		\caption{Contour maps for slices of the kernel density $K_\sigma((\xi,\xi_2',\xi_3'),\mathscr{D})$ with input cardinality 3.
		A pair of features $\xi_2'$ and $\xi_3'$, indicated by white crosshairs, are fixed to restrict to a 2D subspace as follows: (a) $(\xi_2', \xi_3') = ((1,3),(2,4))$ and (b) $(\xi_2', \xi_3') = ((1,3),(2.5,3.5))$.}
		Since the symmetric version of the density is used, the order of these features is irrelevant. 
		The center diagram is indicated by red (upper) and green (lower) points.
		Scale bars at the right of each plot indicate the range of probability density in each shaded region.
		\label{3F}
	\end{figure}

The terms $(1-q^{(k)})$ within the $\mathcal{Q}^*$ expression (see Eq. \eqref{eqn_Qstar}) are very small and appear in terms for which the corresponding upper feature is unassigned. 
These terms are so small because both upper features have very long persistence in this example (four times the bandwidth), and so the terms in Eqs. \eqref{eqn_pdf_f1}, \eqref{eqn_pdf_f2}, and \eqref{eqn_pdf_f3} which do not include one or both upper Guassians $p^{(1)}$ and $p^{(2)}$ have progressively smaller contribution to the overall local kernel. 
Consequently, the kernel places much higher probability density near input diagrams with features nearby each upper feature in the center diagram.
This behavior is seen in Fig. \ref{PHD_vs_1F}, \ref{2F}, \ref{3F}, and their respective analyses, and is directly correlated to the ratio of persistence to bandwidth for each feature.

\end{example}

\begin{example} \label{ex2} \rm
Here we consider the random persistence diagram generated from a specific random dataset in $\R^2$. 
Our goal in this example is to build and demonstrate convergence of the kernel density estimate for the pdf of the associated random persistence diagram.
Specifically, we generate sample datasets which each consist of 10 points sampled uniformly from the unit circle with additive Gaussian noise, $N((0,0),\LP\frac{1}{50}\RP^2I_2)$.
This toy dataset is prototypical for signal analysis (corresponding to the circular dynamics of a noisy sine curve), wherein the high dimensional point cloud is obtained through delay-embedding of the signal.
An in-depth analysis of using delay embedding alongside persistent homology is found in \citep{tda_windows}. 

These datasets each yield a $\cech$ persistence diagram as described in Section \ref{sect:TDA} for degree of homology $k=1$. 
A sample dataset and its associated $k=1$ persistence diagram are shown in Fig. \ref{ex2_sample_data}.
Since these datasets are sampled from the unit circle perturbed by relatively small noise, one expects the associated 1-homology to have a single persistent feature with $d \approx 1$ with possible brief features caused by noise. 

\begin{figure}
\begin{center}
\begin{multicols}{2}
\includegraphics[scale = 0.3]{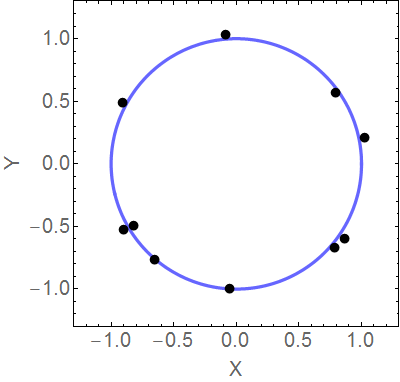} \\ (a) \\
\includegraphics[scale = 0.3]{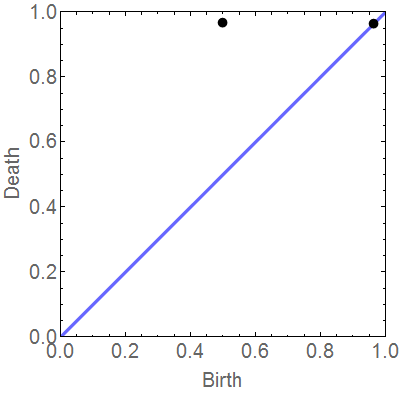} \\ (b)
\end{multicols}
\end{center}
\vspace{-3mm}
\caption{An example underlying dataset and its associated persistence diagram. 
The persistence diagrams are used as the centers for the kernel density estimate.
For this example, persistence diagrams with more than one feature are relatively rare.}
\label{ex2_sample_data}
\end{figure}

\begin{table}
	\begin{tabular}{|c | c | c | c | c |} \hline
			KDE	 & (1)	& (2)	& (3)	& (4)	\\ \hline
			n	 & 100	& 300	& 1000	& 5000 	\\ \hline
		$\sigma$ & 0.03	& 0.025 & 0.020	& 0.015 \\ \hline
	\end{tabular} \\[2mm]
	\caption{Choices of sample size $n$ (number of persistence diagrams) and bandwidth $\sigma$ for each kernel density estimate $\hat f_{n,\sigma}(Z)$ shown in Fig. \ref{fig_KDE_simp}.}
	\label{table_Nsigma_simp}
\end{table}

We consider several KDEs as we simultaneously increase the number of persistence diagrams ($n$) and narrow the bandwidth ($\sigma$) as shown in Table \ref{table_Nsigma_simp}).
The bandwidth was chosen to scale according to Silverman's rule of thumb \citep{silverman} (see Rmk. \ref{rmk_bandwidth}).

Since the KDEs $\hat f_{n, \sigma}(Z)$ are defined on $\bigcup_N W^N$ for several input cardinalities $N$, we present them in multiple slices by fixing a cardinality and then fixing all but one input feature as described in Rmk \ref{rmk_slices}.
For example, $g(\xi) = \hat f_{n, \sigma}(\xi, \xi_2',...,\xi_N')$ for fixed $\xi_j'$ ($j = 2,..., N$) is a function on $W$ and represents a slice of the local KDE on $W^N$. 
The progression of KDE slices can be seen in Fig. \ref{fig_KDE_simp}, wherein the same slices (i.e., the same features are fixed) are viewed for each choice of $(n,\sigma)$. 
These plots demonstrate in practice the convergence of the kernel density estimator shown in Theorem 1.
Because the sample points for the underlying dataset lie so close to the unit circle, one expects the topological feature to die near scale $d = 1$, as is reflected in the KDEs shown in Fig. \ref{fig_KDE_simp} (left);
however, the distribution of points along the circle allows its birth scale to vary quite a lot. 
Additional features with brief persistence are concentrated very close to the diagonal due to small noise. 
These features tend to be either spurious holes near the edge (smaller $b$ and $d$) or a short split of the main topological loop in two (larger $b$ and $d$); this behavior is reflected in the two peaks for slices of the KDEs shown in Fig. \ref{fig_KDE_simp} (right). 
Indeed, the persistence diagram shown in Fig. \ref{ex2_sample_data} is typical for this example.
Overall, by scanning from top to bottom, Fig. \ref{fig_KDE_simp} demonstrates the convergence of the KDEs as $n$ increases and $\sigma$ decreases.
The location and mass of each mode is as expected from underlying data sampled from the unit circle.
Moreover, very small spread in the limiting density arises from the small noise in the underlying data.
The shape and spread of each mode converges, and the densities for $n = 1000$ and $n = 5000$ are nearly the same.

\begin{figure}
	\begin{multicols}{2}
		\raisebox{29mm}{{\Large(1)}}\includegraphics[scale = 0.37]{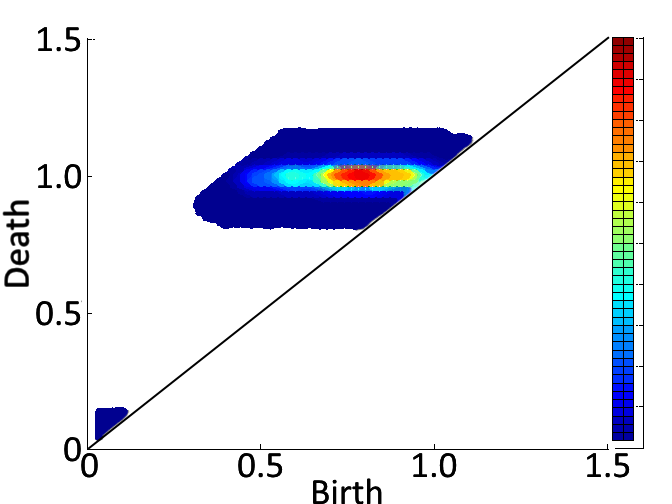}     \\
		\raisebox{29mm}{{\Large(2)}}\includegraphics[scale = 0.37]{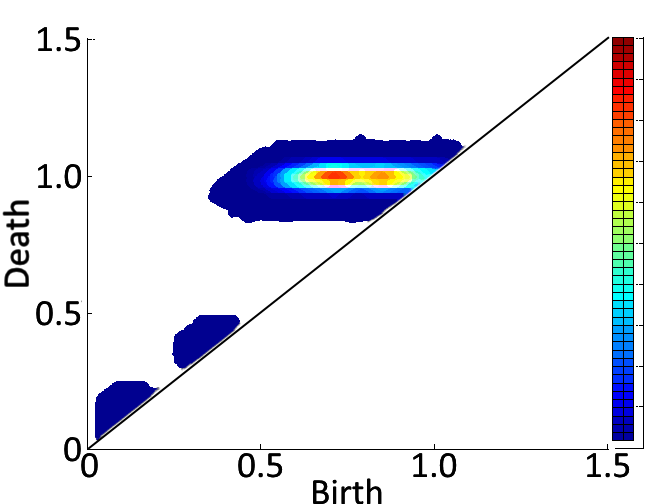}    \\
		\raisebox{29mm}{{\Large(3)}}\includegraphics[scale = 0.37]{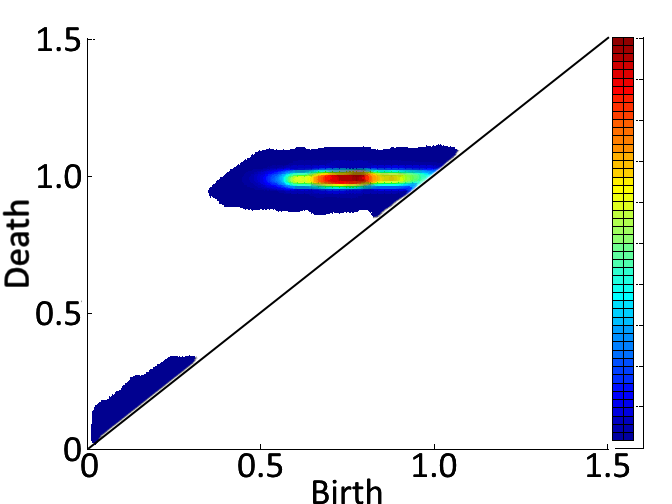}    \\
		\raisebox{29mm}{{\Large(4)}}\includegraphics[scale = 0.37]{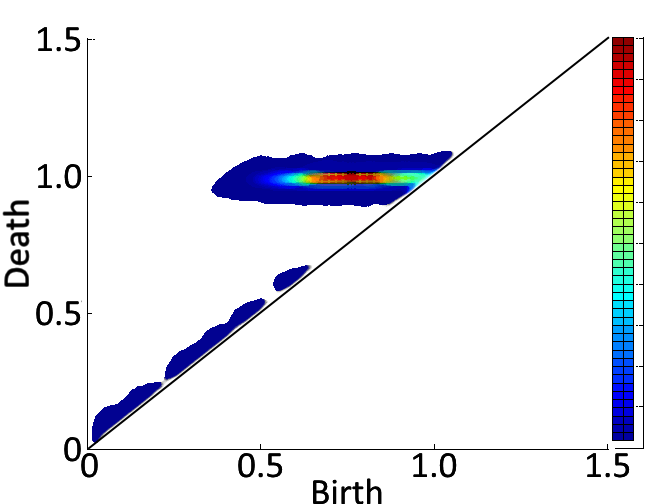}   \\
		\includegraphics[scale = 0.37]{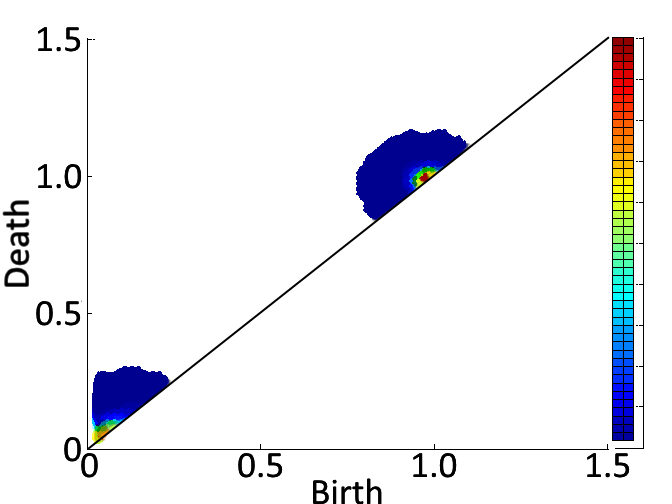}    \\
		\includegraphics[scale = 0.37]{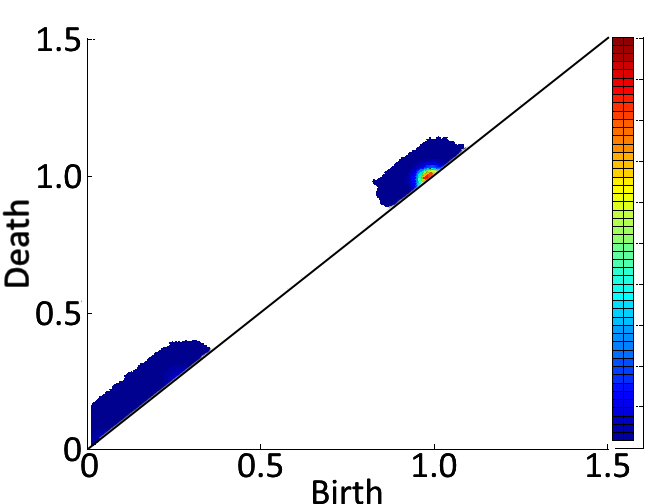}   \\
		\includegraphics[scale = 0.37]{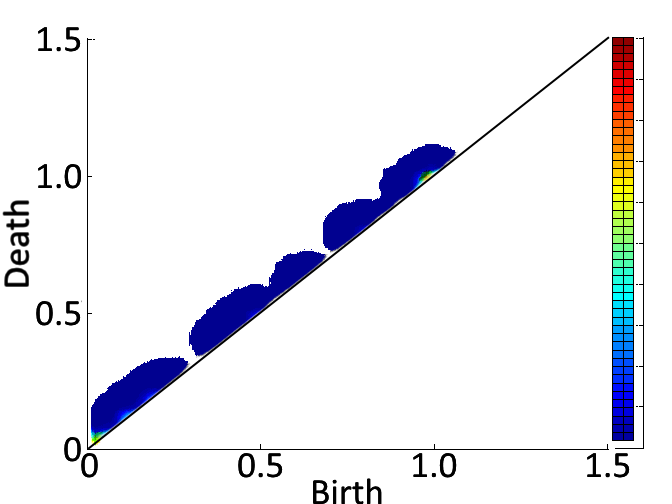}   \\
		\includegraphics[scale = 0.37]{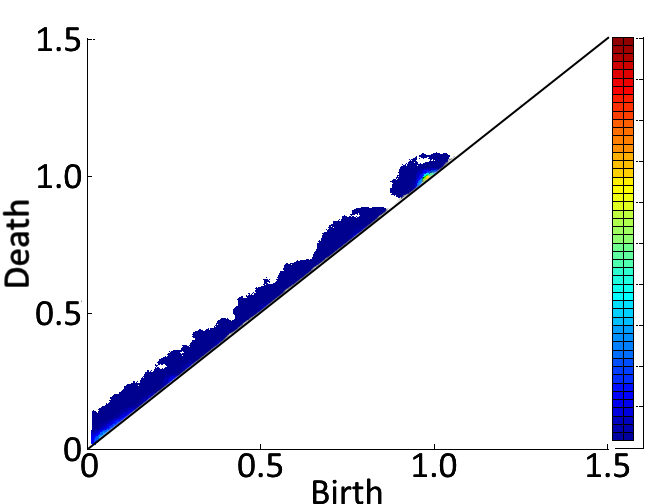} \\
	\end{multicols}
	\caption{Plots of persistence diagram KDEs for Ex. \ref{ex2}.
		Each plot is presented as a heat map where color indicates the probability density.
		White regions above the diagonal indicate portions of very low probability density. 
		Each column is a particular slice, while each row is a particular global KDE with $n$ and $\sigma$ as indicated in Table \ref{table_Nsigma_simp}.
		The left column are the local KDEs $\hat f_{n,\sigma}((b,d))$ evaluated at a diagram with only one feature.
		The mode of the converged density is approximately $(b_2',d_2') = (0.77,0.98)$.
		The right column are the local KDEs $\hat f_{n,\sigma}((b,d), (0.77,0.98))$ evaluated at a diagram with two features and one feature fixed.
		These slices have two modes which are very close to the diagonal at $(0,0)$ and $(1,1)$.
		Overall, this figure demonstrates KDE convergence.}
	\label{fig_KDE_simp}
\end{figure}

Two more examples of persistence diagram KDEs at increasing $n$ and decreasing $\sigma$ are given in the supplementary materials, but which involve more complex underlying data.
\end{example}

\subsection{A Measure of Dispersion} \label{subsect:KDE_MoD}
Theorem \ref{thm_KDE} has established the convergence of a kernel density estimator.
Along with density function estimation, one would like to verify the convergence of properties such as spread. 
In the absence of vector space structure on the space of persistence diagrams, we turn to the bottleneck metric (Defn. \ref{defn_bottleneck}) to define a notion of spread.

Specifically, we measure dispersion with respect to a distribution of persistence diagrams through its mean absolute deviation in this metric.
\begin{defn} \label{defn_MAD}
The mean absolute bottleneck deviation (MAD) from origin diagram $\mathscr{D}$ with respect to a global pdf $f$ is given by
\begin{equation} \label{eqn_bottle_moment}
\textrm{MAD}_f(\mathscr{D}) = \int_{\W} W_\infty(\mathscr{D},Z) f(Z) \delta Z
\end{equation}
\end{defn}

The following proposition aids in proving convergence of MAD kernel estimates. 
Proofs for this section are delegated to the supplementary materials.

\begin{prop} \label{prop_large_dev}
Consider $D$ distributed according to the kernel density $K_\sigma(\cdot,\mathscr{D})$ with center diagram $\mathscr{D}$ and bandwidth $\sigma$. 
Fix $\delta \geq 1$. Then,
\begin{equation} \label{eqn_large_dev}
\P\LB W_\infty(D,\mathscr{D}) < \delta \sigma\RB \geq \LP \int_{B(\bm 0,\delta)} \frac{1}{2 \pi} e^{-(x^2+y^2)/2} \, dx \, dy \RP^M
\end{equation}
where $M$ is the maximal cardinality of $D$ (a multiple of $\Ln \mathscr{D} \Rn$).
Here $B(x,r)$ refers to a ball with respect to the infinity metric (as is used in bottleneck distance). 
\end{prop}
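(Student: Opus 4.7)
The plan is a coupling argument. I would generate $D \sim K_\sigma(\cdot,\mathscr{D})$ from $M$ independent standard bivariate Gaussians $Z_1,\ldots,Z_M$ so that every feature produced in the construction of $D$ has the form $\text{center} + \sigma Z_i$, with the center lying in $\mathscr{D}$ or on a diagonal projection of a lower feature. Specifically, each $\xi_j \in \mathscr{D}^u$ is assigned a single Gaussian $Z_j$ yielding the candidate point $X_j = \xi_j + \sigma Z_j$, while $D^\ell$ is coupled using one Gaussian per potential slot (up to its maximum cardinality, which by Defn.~\ref{defn_lower_cluster} is at most a multiple of $N_\ell$), each centered at the diagonal projection of a lower feature. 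The total slot count $M$ is thus a multiple of $\Ln \mathscr{D} \Rn$.

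On the good event $E = \bigcap_{i=1}^M \LC \LN Z_i \RN_\infty < \delta \RC$, independence of the $Z_i$ gives $\P\LB E \RB = \LP \int_{B(\bm 0,\delta)} \tfrac{1}{2\pi} e^{-(x^2+y^2)/2} \, dx \, dy \RP^M$, which is exactly the right-hand side of Eq.~\eqref{eqn_large_dev}. It therefore suffices to show that on $E$ one can exhibit a bijection between $D$ and $\mathscr{D}$ (both augmented with the diagonal as in Defn.~\ref{defn_bottleneck}) under which every matched pair has infinity distance strictly less than $\delta\sigma$.

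The proposed bijection treats upper features case by case: if $X_j$ lies strictly above the diagonal, then $D^j = \LC X_j \RC$ and $X_j$ is matched to $\xi_j$ at distance $\sigma \LN Z_j \RN_\infty < \delta\sigma$; otherwise $D^j = \emptyset$ and $\xi_j$ is matched to the diagonal. All features of $\mathscr{D}^\ell$ are matched to the diagonal (infinity distance $(d_i - b_i)/2 < \sigma/2 < \delta\sigma$ since $\delta \geq 1$), and every sampled point of $D^\ell$ is also matched to the diagonal (infinity distance at most $\sigma \LN Z_i \RN_\infty < \delta\sigma$, since its Gaussian center sits on the diagonal).

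The main obstacle is the rejected-upper case above: one must verify that matching $\xi_j$ to the diagonal stays within $\delta\sigma$. The key geometric identity is that $X_j$ on or below the diagonal combined with $\LN X_j - \xi_j \RN_\infty < \delta\sigma$ forces $d_j - b_j \leq \sigma(Z_{j,1} - Z_{j,2}) < 2\delta\sigma$, hence $(d_j - b_j)/2 < \delta\sigma$. A minor technicality is the restriction of $p^{(j)}$ to $W$ (rather than to $\LC v > u \RC$) in Defn.~\ref{defn_upper_singletons}; this is handled by letting the coupling resample within $W$ whenever $X_j$ has negative birth coordinate, which preserves the marginal law of $D^j$ and leaves all prior estimates unaffected.
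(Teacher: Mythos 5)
Your strategy is essentially the paper's: match each upper feature to its center (or to the diagonal when the singleton is empty), match everything in the lower portion to the diagonal, and observe that all of these matchings succeed within $\delta\sigma$ on the event that $M$ independent standard Gaussians stay inside the sup-norm ball of radius $\delta$, with independence giving the $M$-th power. Your explicit verification that a rejected upper feature satisfies $d_j-b_j\leq\sigma(Z_{j,1}-Z_{j,2})<2\delta\sigma$, hence lies within $\delta\sigma$ of the diagonal, is a cleaner treatment of a step the paper handles only implicitly (``if $B_j\nsubseteq W$ \ldots the case $D^j=\emptyset$ is included'').

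The one genuine soft spot is your resampling fix for the restriction of $p^{(j)}$ to $W$ rather than to $T=\LC v>u\RC$. Resampling does preserve the marginal law of $D^j$, but it does \emph{not} ``leave all prior estimates unaffected'': your whole argument rests on the inclusion $E\subset\LC W_\infty(D,\mathscr{D})<\delta\sigma\RC$, and a resampled position is drawn from the renormalized Gaussian on all of $W$, independently of $Z_j$, so on $E$ it need not lie within $\delta\sigma$ of $\xi_j$. This matters exactly for upper features with $b_j<\delta\sigma$ (otherwise, on $E$, $X_j\in T$ already forces $X_j\in W$ and no resampling occurs). The paper sidesteps this by not coupling at all: it bounds each per-feature probability directly, using that the density of a present singleton at $w\in W$ equals $q^{(j)}\varphi_j(w)/\int_W\varphi_j\geq\varphi_j(w)$ (since $q^{(j)}=\int_T\varphi_j\geq\int_W\varphi_j$), so that $\P\LB\zeta^j\in B(\xi^j,\delta\sigma)\RB\geq\P\LB\|z\|_\infty\leq\delta\RB$ whenever $B(\xi^j,\delta\sigma)\subset W$, and it appeals to the diagonal matching for the boundary cases; independence across the $M$ constituents then yields the product. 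Replacing your resampling step with this density-domination bound (or enlarging the per-feature good event for small-birth features) closes the gap; the rest of your argument stands.
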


Next, we relax assumption $(A2)$ by considering the entire multi-wedge $\W$ and tighten the decay control from assumption $(A3)$. 
Formally,
\begin{align*}
(A&2)^* \,\, \textrm{The local density } f_N:\W^N \goto \R \textrm{ is bounded for each } N \in \LC 1,...,M \RC\!. \\
(A&3)^* \,\, \textrm{There exists } C >0 \textrm{ so that } f(\xi_1,...,\xi_N) \leq C \LN (\xi_1,...,\xi_N) \RN^{-2N-2} \textrm{ for } N \in \LC 1,...,M \RC.
\end{align*}

These assumptions (and $(A1)$) are required for the subsequent lemma, which ensures that the mean absolute bottleneck deviation (MAD) is finite. 

\begin{lemma} \label{lemma_bottleneck_control}
Consider a random persistence diagram $D$ distributed according to a global pdf $f$ satisfying assumptions $(A1)$, $(A2)^*$, and $(A3)^*$.
Then $D$ has finite MAD for any choice of origin diagram $\mathscr{D}$. 
\end{lemma}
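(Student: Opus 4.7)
By Defn.~\ref{defn_set_integral}, the set integral defining $\mathrm{MAD}_f(\mathscr{D})$ decomposes across cardinalities, and assumption $(A1)$ restricts the sum to $0 \leq N \leq M$:
\begin{equation*}
\mathrm{MAD}_f(\mathscr{D}) = \sum_{N=0}^M \frac{1}{N!} \int_{\W^N} W_\infty\bigl(\mathscr{D}, h_N(\xi_1,\ldots,\xi_N)\bigr) \, f(\xi_1,\ldots,\xi_N) \, d\xi_1 \cdots d\xi_N.
\end{equation*}
It therefore suffices to show each summand is finite, since a finite sum of finite quantities is finite.

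The plan is to bound the bottleneck factor pointwise by something tame, and then integrate against $f$ using $(A2)^*$ and $(A3)^*$. For the bottleneck bound, I would use the suboptimal matching that sends every feature of $\mathscr{D}$ and every feature of $Z = h_N(\xi_1,\ldots,\xi_N)$ to its nearest diagonal point. Since the $\ell_\infty$ distance from any $(b,d)$ to the diagonal equals $(d-b)/2 \leq \|(b,d)\|_\infty$, this yields
\begin{equation*}
W_\infty(\mathscr{D}, Z) \;\leq\; \max\Bigl\{ C(\mathscr{D}), \,\max_{1 \leq i \leq N} \|\xi_i\|_\infty \Bigr\} \;\leq\; C(\mathscr{D}) + \|(\xi_1,\ldots,\xi_N)\|,
\end{equation*}
where $C(\mathscr{D}) = \max_{(b,d)\in\mathscr{D}}(d-b)/2$ depends only on $\mathscr{D}$ and is finite because $\mathscr{D}$ has finite cardinality.

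Substituting this bound, each summand splits into two integrals. The constant piece $C(\mathscr{D}) \int_{\W^N} f \,d\bm{\xi}$ is bounded by $C(\mathscr{D}) \cdot \P[|D|=N] \leq C(\mathscr{D})$, hence finite. For the remaining piece $\int_{\W^N} \|\bm{\xi}\| \, f(\bm{\xi}) \, d\bm{\xi}$, I would split the domain into $\{\|\bm{\xi}\| \leq 1\}$ and $\{\|\bm{\xi}\| > 1\}$. On the unit ball the integrand is bounded by $\|f_N\|_\infty$ (from $(A2)^*$) times $\|\bm{\xi}\| \leq 1$, so its integral is finite. On the complement, $(A3)^*$ gives $f(\bm{\xi}) \leq C \|\bm{\xi}\|^{-2N-2}$, so the integrand is bounded by $C\|\bm{\xi}\|^{-2N-1}$; converting to polar-type coordinates on $\W^N \subset \R^{2N}$ yields a radial integral $\int_1^\infty r^{-2N-1}\,r^{2N-1}\,dr = \int_1^\infty r^{-2}\,dr < \infty$.

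The main technical point is the matching of exponents: the decay rate $-2N-2$ in $(A3)^*$ is precisely calibrated so that, after multiplying by the linear factor $\|\bm{\xi}\|$ and accounting for the $r^{2N-1}$ volume element in $2N$ dimensions, the tail integral remains convergent uniformly across all $N \leq M$. I do not expect any serious obstacle beyond verifying these exponents and checking that the $N=0$ term (contributing $W_\infty(\mathscr{D},\emptyset) \cdot \P[|D|=0] = C(\mathscr{D}) \cdot \P[|D|=0]$) is also handled trivially.
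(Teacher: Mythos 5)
Your proposal is correct and follows essentially the same route as the paper: your ``match everything to the diagonal'' bound is exactly the triangle inequality $W_\infty(\mathscr{D},Z) \leq W_\infty(\mathscr{D},\emptyset) + W_\infty(\emptyset,Z)$ used in the paper, with $W_\infty(\emptyset,Z)$ (half the maximal persistence) dominated by $\LN Z \RN$, and the split of the integral at the unit ball with $(A2)^*$ inside and $(A3)^*$ outside reproduces the paper's compact-set decomposition and the same $r^{-2N-1}\cdot r^{2N-1}=r^{-2}$ exponent check.
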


Similar to assumption $(A3)$ (given prior to Thm. \ref{thm_KDE}), $(A3)^*$ holds for a random persistence diagram associated with underlying data sampled from a compact set perturbed by Gaussian noise.
One may also replace Lemma \ref{lemma_bottleneck_control} and its assumptions by directly assuming that the maximal persistence moment is bounded; with this, the results of the lemma follow immediately from Eq. \eqref{eqn_minkowski} in the supplementary.
This direct assumption is weaker (implied by $(A1)$, $(A2)^*$, and $(A3)^*$), but may be difficult to show directly in practice. 

\begin{theorem} \label{thm_moment}
Consider a distribution of persistence diagrams with bounded global pdf, $f$, satisfying assumptions $(A1)$, $(A2)^*$, and $(A3)^*$.
Let $\hat f(Z) = \frac{1}{n} \sum_{i=1}^n K_\sigma(Z,\mathscr{D}_i)$ be a kernel density estimate with centers $\mathscr{D}_i$ sampled i.i.d. according to global pdf $f$ and bandwidth $\sigma = O(n^{-\alpha})$ chosen with $0 < \alpha < \alpha_{2M}$. 
Then, the mean absolute bottleneck deviation estimate converges; in other words,
\begin{equation} \label{eqn_moment}
\int_{\W} W_\infty(\mathscr{D}_0,Z) \hat{f}(Z) \delta Z \goto \int_{\W} W_\infty(\mathscr{D}_0,Z) f(Z) \delta Z
\end{equation}
as $n \goto \infty$ for any origin diagram $\mathscr{D}_0$. 
\end{theorem}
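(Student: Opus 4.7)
The plan is to reduce the theorem to the strong law of large numbers through a three-step argument. First, by linearity of integration and the fact that each $K_\sigma(\cdot, \mathscr{D}_i)$ is a global pdf,
\begin{equation*}
\int_{\W} W_\infty(\mathscr{D}_0, Z) \hat f(Z) \delta Z = \frac{1}{n}\sum_{i=1}^n \int_{\W} W_\infty(\mathscr{D}_0, Z) K_\sigma(Z, \mathscr{D}_i) \delta Z = \frac{1}{n}\sum_{i=1}^n \E\LB W_\infty(\mathscr{D}_0, D_i) \RB,
\end{equation*}
where $D_i$ denotes a random persistence diagram with density $K_\sigma(\cdot, \mathscr{D}_i)$. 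Second, I would approximate each expectation by the deterministic value $W_\infty(\mathscr{D}_0, \mathscr{D}_i)$, and finally apply the strong law of large numbers to the resulting sample mean.

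For the second step, the bottleneck triangle inequality yields $\Ln W_\infty(\mathscr{D}_0, D_i) - W_\infty(\mathscr{D}_0, \mathscr{D}_i) \Rn \leq W_\infty(D_i, \mathscr{D}_i)$, reducing matters to bounding $\E \LB W_\infty(D_i, \mathscr{D}_i) \RB$ uniformly in $\mathscr{D}_i$. Writing $G(\delta) = \int_{B(\bm 0, \delta)} \frac{1}{2\pi} e^{-(x^2+y^2)/2} dx \, dy$, Proposition \ref{prop_large_dev} rearranges to $\P\LB W_\infty(D_i, \mathscr{D}_i) \geq \delta \sigma \RB \leq 1 - G(\delta)^M$ for $\delta \geq 1$, so by the tail integral formula
\begin{equation*}
\E\LB W_\infty(D_i, \mathscr{D}_i) \RB = \sigma \int_0^\infty \P\LB W_\infty(D_i, \mathscr{D}_i) \geq \sigma t \RB dt \leq \sigma \LB 1 + \int_1^\infty \LP 1 - G(t)^M \RP dt \RB = C\sigma,
\end{equation*}
since $1 - G(t)^M \leq M(1-G(t))$ has Gaussian tail decay and is integrable on $[1,\infty)$. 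Averaging, the discrepancy between $\int_\W W_\infty(\mathscr{D}_0, Z) \hat f(Z) \delta Z$ and $\frac{1}{n}\sum_{i=1}^n W_\infty(\mathscr{D}_0, \mathscr{D}_i)$ is bounded by $C\sigma$, which tends to $0$ under the hypothesis $\sigma = O(n^{-\alpha})$.

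For the third step, Lemma \ref{lemma_bottleneck_control} (which uses $(A1)$, $(A2)^*$, and $(A3)^*$) guarantees $\E_{\mathscr{D}\sim f}\LB W_\infty(\mathscr{D}_0, \mathscr{D})\RB < \infty$, so the strong law of large numbers gives
\begin{equation*}
\frac{1}{n}\sum_{i=1}^n W_\infty(\mathscr{D}_0, \mathscr{D}_i) \goto \int_\W W_\infty(\mathscr{D}_0, Z) f(Z) \delta Z \quad \textrm{almost surely.}
\end{equation*}
Combining the two estimates yields Eq. \eqref{eqn_moment}.

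The main obstacle I anticipate is confirming uniformity of the $O(\sigma)$ rate in the center diagram: the exponent $M$ in Proposition \ref{prop_large_dev} is tied to the maximum cardinality of $D_i$, which depends on $\Ln \mathscr{D}_i^u \Rn$ and on $N_\ell$ through the pmf $\nu$ in Definition \ref{defn_lower_cluster}. Assumption $(A1)$ forces a global bound on $\Ln \mathscr{D}_i \Rn$, hence on $M$, producing a single constant $C$ that also absorbs the wedge-restriction normalizations of Definition \ref{defn_upper_singletons}. Notably, this argument sidesteps the uniform-on-compacts convergence of Theorem \ref{thm_KDE}, relying only on linearity, the bottleneck triangle inequality, the tail bound of Proposition \ref{prop_large_dev}, and the finiteness of the target MAD.
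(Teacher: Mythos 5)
Your argument is correct and follows essentially the same route as the paper's proof: both decompose the error, via the (reverse) triangle inequality for $W_\infty$, into the law-of-large-numbers convergence of $\frac{1}{n}\sum_{i=1}^n W_\infty(\mathscr{D}_0,\mathscr{D}_i)$ (justified by the finite MAD from Lemma \ref{lemma_bottleneck_control}) plus the kernel-smearing error $\frac{1}{n}\sum_{i=1}^n \E\LB W_\infty(D_i,\mathscr{D}_i)\RB$, which both control with the tail bound of Prop.\ \ref{prop_large_dev} through a layer-cake integral. The only cosmetic difference is that you evaluate the tail integral directly via the substitution $s=\sigma t$ and obtain an $O(\sigma)$ rate, whereas the paper splits at an intermediate threshold $a=O(\sigma^\beta)$ and gets $O(\sigma^\beta)$; your remark that $(A1)$ makes the exponent $M$ (and hence the constant $C$) uniform over the centers $\mathscr{D}_i$ correctly addresses the one point the paper leaves implicit.
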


\section{Discussion and Conclusions} \label{sect:discussion}
A nonparametric approach to approximating density functions of finite random persistence diagrams has been presented.
This includes the introduction of a kernel density function, as well as proof that the kernel density itself and its mean absolute deviation converge to those of the target distribution. 
Future work will investigate the convergence of powers of the absolute deviation (e.g., bottleneck variance) and deviations involving the Wasserstein metric (an $L^p$ generalization of bottleneck metric, see \citep{CompyTopo}).
Our framework is presented through the lens of geometric simplicial complexes, and in particular $\cech$ complexes.
The resulting persistence diagrams are based on underlying datasets in a metric space.
In general, one may define persistent homology for a function $f$ defined on a topological space \citep{CompyTopo}, and therefore random functions may also give rise to random persistence diagrams, see \citep{tda_fields} for an example.
A similar kernel density estimate approach can be formulated in this case, but perhaps different assumptions may be needed on the target pdf.

Our approach is fully data-driven, a necessary step since distributions of persistence diagrams were previously poorly understood.
The assumptions $(A1)$-$(A3)$, $(A2)^*$, and $(A3)^*$ are typical for kernel density estimators \citep{KDE_book}. 
Similar assumptions on the underlying data are inherited by the random persistence diagram, because variation in \v Cech persistent homology is controlled by interpoint distances.
In particular, probability density decay follows the same trends as noise in the underlying data; 
this is seen in Fig. \ref{fig_KDE_simp} (a) for Gaussian noise.
Thus, the kernel density estimates defined here can be reliably used for data analysis, adding a detailed tool to the methods used in topological data analysis. 
In particular, this is the first result yielding probability density functions which directly analyze the full distribution information of a random persistence diagram.
For applications in machine learning such as classification, the kernel density estimates carry information for generating more sophisticated features than previously available;
 e.g., the value of the global pdf at a specific input or list of inputs or the integral of the global pdf over a specified region. 
Access to a pdf also provides a tool with which one can check for classification robustness in terms of likelihood or Bayes factors, providing a measure of the confidence in a particular outcome. 

Lending credence to applicability in data analysis, an example of kernel density estimation is presented in Subsection \ref{subsect:Examples}. 
In this example, underlying datasets are generated to lie on the unit circle with additive noise, a prototypical example for topological data analysis. 
Our analysis yields detailed information about the distribution of diagrams, even though only two 2-dimensional slices of the kernel density estimate are shown. 
This example demonstrates the convergence of the kernel density estimator in practice for large enough sample size (number of persistence diagrams).
This example along with the supplementary examples also demonstrate the detailed information contained in a persistence diagram KDE. 

In the context of Fig. \ref{heat}, it is clear that sampling from the kernel density is straightforward, and in fact computation time scales linearly in the number of features in the center diagram $\mathscr{D}$. 
In contrast, precise evaluation of the kernel global pdf at a diagram requires the more thorough computations shown in Eq. \eqref{eqn_construction}. 
This evaluation is made tractable due to the separation of the center diagram into upper and lower portions: $\mathscr{D} = \mathscr{D}^u \cup \mathscr{D}^\ell$ as described in Eq. \eqref{eqn_split}.
In practice, diagrams should split so that $\Ln \mathscr{D}^u \Rn$ is small while $\Ln \mathscr{D}^\ell \Rn$ is large.
Evaluation of individual feature pdfs on the multi-wedge $\W$ only scales quadratically on the cardinality $\Ln \mathscr{D} \Rn$ and higher degree calculations are required only for combinatorics on the large persistence features in the upper diagram $\mathscr{D}^u$. 
Consequently, these calculations are tractable so long as $\mathscr{D}^u$ does not grow too much in cardinality, while an increased cardinality for $\mathscr{D}^\ell$ has a lesser effect on computation time. 

The kernel density presented here treats the small persistent features in $D^\ell$ as a single group.
Since convergence (Thm. \ref{thm_KDE}) requires very little structure in the lower random diagram, it may be helpful in practice to cluster the lower portion of the center diagram, followed by defining a random diagram centered at each cluster.
This approach somewhat complicates the expression and evaluation of the kernel density, but does not complicate sampling from the kernel density.
The goal of this approach is to more carefully capture the geometric features of the underlying random dataset, since such geometric features often correspond to briefly persistent homological features. 
For example, geometric features are of paramount importance for classifying periodic signals through their delay embeddings, wherein the large persistent feature indicates periodicity and thus is expected to appear in every class.


\pagebreak\appendix
\section{Proof of Theorem \ref{thm_KDE}} \label{sect:proof_of_thm_KDE} 
The proof presented in this section describes the case for degree of homology $k>0$. 
The case for $k=0$ is obtained by a slight modification and the full result follows by an application of Corollary \ref{cor_general_kernel}.

Recall Thm. \ref{thm_construction}, which defines the pertinent kernel density $K_\sigma(Z,\mathscr{D})$ evaluated at $Z= (\xi_1,...,\xi_N)$ according to center diagram $\mathscr{D}$ and bandwidth $\sigma$ by
$$ K_\sigma(Z, \mathscr{D}) = \sum_{j=0}^{N_u} \nu(N-j) \sum_{\gamma \in I(j,N_u)} \mathcal{Q}(\gamma) \prod_{k=1}^j  p^{(\gamma(k))}(\xi_{k}) \prod_{k=j+1}^N  p^\ell(\xi_{k}) $$
where $\mathcal{Q}(\gamma)$ is given by Eq. \eqref{eqn_QQ}, each $p^{(j)}$ refers to the modified Gaussian pdf shown in Eq. \eqref{eqn_mod_normal} for its matching feature $\xi_j$ in $D^u$, $N_u = \Ln \mathscr{D}^u \Rn$, and $p^\ell$ is given by Eq. \eqref{eqn_lower_density}.
Also recall that $\mathscr{D}$ is split into $\mathscr{D}^\ell$ and $\mathscr{D}^u$ according to Eq. \eqref{eqn_split}, $D^\ell$ is defined with global pdf from Eq. \eqref{eqn_lower_dist}, and $D^u$ is defined with global pdf from Eq. \eqref{eqn_combination}.

Throughout the proof we use $\xi_i$ to denote input features and $Z = \LC \xi_1,...,\xi_N \RC$ or $Z = \LP \xi_1,...,\xi_N \RP$ to denote an input persistence diagram as a set or vector of features. 
Several preliminary lemmas are presented before the main body of the proof. 
We begin with a critical lemma which controls the number of features sampled in the band diagonal $\Delta_{\alpha}^{\beta} = \LC (b,d) \in W : \alpha < d-b < \beta \RC$.

\begin{lemma} \label{band_diag}
	Consider a random persistence diagram $D$ distributed according to $f$ satisfying assumptions $(A1)$-$(A3)$.
	Then there exists $C > 0$ so that $\E^f\LP\Ln \Delta_0^{\sigma} \cap D\Rn\RP \leq C \sigma$. 
\end{lemma}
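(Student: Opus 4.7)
The plan is to start from the identity $\E^f(\Ln \Delta_0^\sigma \cap D \Rn) = \int_{\Delta_0^\sigma} F_D(\xi)\, d\xi$ provided by Defn.~\ref{defn_PHD} and reduce the lemma to a pointwise bound of the form $F_D(\xi) \leq \min(K,\, C\|\xi\|^{-2})$. Using Prop.~\ref{prop_belief_layers} to decompose $\beta_D$ by cardinality and writing the local densities in symmetric form (Rmk.~\ref{rmk_unique_symmetric}), $F_D$ reduces to a finite sum
\begin{equation*}
F_D(\xi) = \sum_{N=1}^{M} c_N\, g_N(\xi), \qquad g_N(\xi) = \int_{W^{N-1}} f_N(\xi,\xi_2,\ldots,\xi_N)\, d\xi_2\cdots d\xi_N,
\end{equation*}
for constants $c_N$ depending only on $N$, so it suffices to show $\int_{\Delta_0^\sigma} g_N(\xi)\, d\xi = O(\sigma)$ for each $N \leq M$.

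The main step is the pointwise estimate $g_N(\xi) \leq \min(K_N,\, C_N'\|\xi\|^{-2})$. The uniform bound on $\{\|\xi\|\leq 1\}$ follows from the boundedness of $f_N$ in (A2) together with the $\|\cdot\|^{-2N}$ decay of (A3), which makes the $(N-1)$-fold integration tail finite. For $\|\xi\|$ large, I split the integration domain at radius $\|\xi\|$: on $\{\|(\xi_2,\ldots,\xi_N)\|\leq \|\xi\|\}$, assumption (A3) yields $f_N \leq C_N\|\xi\|^{-2N}$ and the region has volume $O(\|\xi\|^{2(N-1)})$, contributing $O(\|\xi\|^{-2})$; on the complement, $f_N \leq C_N\|(\xi_2,\ldots,\xi_N)\|^{-2N}$, and polar integration in $\R^{2(N-1)}$ of $r^{-2N}\cdot r^{2(N-1)-1} = r^{-3}$ over $r \in (\|\xi\|,\infty)$ gives another $O(\|\xi\|^{-2})$ term. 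For $N=1$ the inner integral is absent and the bound follows from (A3) directly.

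To finish, I parameterize the strip as $\xi=(b,b+t)$ with $b \geq 0$ and $t \in (0,\sigma)$, pick some fixed radius $R$, and split $\Delta_0^\sigma$ at $\|\xi\| = R$. The near part $\Delta_0^\sigma \cap B(\bm 0, R)$ has area $O(R\sigma)$ and contributes $O(K_N R\sigma)$; on the far part $\|\xi\|^2 \geq 2b^2$, so
\begin{equation*}
\int_{R/\sqrt{2}}^{\infty}\!\int_{0}^{\sigma} \frac{C_N'}{2 b^2}\, dt\, db \;=\; \frac{C_N'\, \sigma}{\sqrt{2}\, R}.
\end{equation*}
Both contributions are $O(\sigma)$, and summing the estimate $\int_{\Delta_0^\sigma} g_N \leq C_N'' \sigma$ over $N \leq M$ yields the constant $C$ in the lemma.

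The hard part will be the marginal decay of $g_N$: applying (A3) naively to the integrand of $g_N$ gives no decay in $\xi$ alone, since the tail volume $O(\|\xi\|^{2N-2})$ exactly compensates the pointwise factor $\|\xi\|^{-2N}$. The splitting at radius $\|\xi\|$ is what converts the isotropic joint decay of (A3) into a usable $\|\xi\|^{-2}$ marginal bound; everything else reduces to a routine geometric computation on the thin strip $\Delta_0^\sigma$.
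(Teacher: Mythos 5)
Your proof is correct and follows essentially the same route as the paper's: both reduce the expectation to $\int_{\Delta_0^{\sigma}} F_D(\xi)\,d\xi$ via the probability hypothesis density, bound $F_D$ by a constant near the origin (from (A2)) and by an inverse-square decay far from it (from (A3)), and split the strip integral at a finite radius to obtain two $O(\sigma)$ contributions. The only substantive difference is that you explicitly verify the $\|\xi\|^{-2}$ marginal decay of $F_D$ by splitting the inner integration at radius $\|\xi\|$ --- a step the paper asserts follows from (A2)--(A3) without writing it out --- so your writeup is, if anything, more complete.
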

\begin{proof}
	Consider a region $A \subset W$ and a counting function $\kappa_A(Z) = \Ln Z \cap A \Rn$ such that 
	$\kappa_A(\LC \xi_1,...,\xi_N \RC) = \sum_{i=1}^N \one_A(\xi_i)$. 
	It is clear that this set function is well defined and measurable if $A$ is measurable. 
	Using set integration (Defn. \ref{defn_set_integral}),
	\begin{equation} \label{eqn_counting}
	\E(\Ln \Delta_0^{\sigma} \cap D\Rn) = \int_{W} \kappa_{\Delta_0^{\sigma}}(Z) f(Z) \delta Z = \sum_{N=0}^M \frac{N}{N!} \int_W \one_{\Delta_0^{\sigma}}(\xi_1) \LB \int f(\xi_1,...\xi_N) d\xi_2 ... d\xi_N \RB d\xi_1
	\end{equation}
	The expressions in Eq. \eqref{eqn_counting} can be phrased in terms of the probability hypothesis density from Eq. \eqref{eqn_PHD}, and for any choice of $L > 0$ are bounded by
	\begin{align*}
	\int_{\Delta_0^{\sigma}} F_D(\xi)  d\xi &\leq \int_{0}^{L} \int_{y-\sigma}^{y} F_D(x,y) \, dx\,dy 
	+ \int_L^\infty \int_{y-\sigma}^{y} C_3 y^{-2} \, dx\, dy \\[2mm]
	&\leq L C_2 \sigma + 3C_3 \sigma/L = (LC_2 + C_3/L) \sigma
	\end{align*}
	where assumptions (A2) and (A3) respectively yield the bounds $C_2$ and $C_3 y^{-2}$ on the probability hypothesis density, $F_D$. 
\end{proof}

Lemma \ref{band_diag} yields control over the counting measure $\nu_i$ defined in Defn. \ref{defn_lower_cluster} and the coefficients $\mathcal{Q}^*_i(\cdot)$ of Eq. \eqref{eqn_Qstar} which respectively determine the distribution of lower and upper cardinalities for a persistence diagram sampled according to the kernel density $K_\sigma(Z,\mathscr{D}_i)$.

\begin{cor} \label{nu}
	Consider a random persistence diagram $D$ distributed according to $f$ satisfying assumptions $(A1)$-$(A3)$.
	Take $\nu$ to be the lower cardinality probability mass function for the kernel density $K_\sigma(Z,D)$ shown in Eq. \eqref{eqn_construction}. 
	Then, there exists $C > 0$ so that $\E^f \nu(j_0) \leq C\sigma$ whenever $j_0 \neq 0$. 
\end{cor}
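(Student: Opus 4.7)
The plan is to exploit the fact that the lower pmf $\nu$ from Defn. \ref{defn_lower_cluster} depends on the random center diagram $D$ only through $N_\ell = \Ln D \cap \Delta_0^\sigma \Rn$, and then to combine a trivial cardinality bound with Markov's inequality and Lemma \ref{band_diag}.

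First I would split the expectation over realizations of $D$ according to whether $N_\ell = 0$ or $N_\ell \geq 1$. On the event $\LC N_\ell = 0 \RC$, Defn. \ref{defn_lower_cluster} forces $\nu$ to have mean $0$ with support contained in $\LC 0, 1, \ldots, m N_\ell \RC = \LC 0 \RC$, so $\nu(j_0) = 0$ for all $j_0 \neq 0$. On the complementary event $\LC N_\ell \geq 1 \RC$, the trivial bound $\nu(j_0) \leq 1$ applies. Combining the two cases,
$$\E^f \nu(j_0) \leq \P^f \LB N_\ell \geq 1 \RB \quad \text{for every } j_0 \neq 0.$$

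Next I would invoke Markov's inequality to get $\P^f \LB N_\ell \geq 1 \RB \leq \E^f \LB N_\ell \RB$, and recognize via the splitting rule in Eq. \eqref{eqn_split} that $\E^f \LB N_\ell \RB = \E^f \Ln D \cap \Delta_0^\sigma \Rn$. Lemma \ref{band_diag} then directly supplies $\E^f \Ln D \cap \Delta_0^\sigma \Rn \leq C \sigma$ for a constant $C > 0$ depending only on the bounds from $(A2)$ and $(A3)$, and chaining these inequalities delivers the claimed estimate.

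I do not expect any substantive obstacle: Lemma \ref{band_diag} already performs the heavy lifting by controlling the probability hypothesis density along the thin diagonal band, and the only point that requires attention is that $\nu$ is itself a random object depending on $D$ through $N_\ell$. The case split above isolates this dependence cleanly, which is why no further regularity on $\nu$ beyond the defining properties in Defn. \ref{defn_lower_cluster} (zero support outside $\LC 0, \ldots, m N_\ell \RC$ and the trivial probability bound) is needed.
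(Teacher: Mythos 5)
Your proof is correct and follows essentially the same route as the paper: both arguments reduce the claim to the bound $\E^f\LB N_\ell \RB \leq C\sigma$ supplied by Lemma \ref{band_diag}, differing only in the elementary step dominating $\nu(j_0)$ by a function of $N_\ell$ (the paper uses $\nu(j_0) \leq \sum_{j\geq 1} j\,\nu(j) = \E^\nu(\bm a) = N_\ell$ directly, whereas you use the case split $\nu(j_0) \leq \one\LC N_\ell \geq 1\RC$ followed by Markov's inequality, which is an equivalent and slightly tighter intermediate bound).
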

\begin{proof}
	Since $D$ is random with respect to $f$, $\nu$ is random with respect to $f$ as well.
	Recall that $\nu$ is defined so that $\E^{\nu}(\bm a) = \Ln D^\ell \Rn$ for $\bm a$ distributed according to $\nu$ and thus $\E^f[\E^\nu(\bm a)] \leq C \sigma$ for some $C>0$ by Lemma \ref{band_diag}. 
	Subsequently, the value $\E^f \nu(j_0)$ is controlled by this double expectation so long as $j_0 \neq 0$. 
	Indeed, 
	$$\E(\bm a) = \sum_{j = 0}^\infty j \nu(j) = \sum_{j=1}^\infty j \nu(j) \geq \sum_{j=1}^\infty \nu(j) \geq  \nu(j_0)$$
	for any $j_0 > 0$ and $\nu_i(j_0) = 0$ for $j_0 < 0$ since it represents a cardinality distribution.
\end{proof}

In the following lemma, the result of Lemma \ref{band_diag} is used to control the expressions $\mathcal{Q}(\gamma)$ or $\mathcal{Q}^*(\gamma)$, of Eq. \eqref{eqn_QQ} and Eq. \eqref{eqn_Qstar} respectively, in the kernel density estimate.

\begin{lemma} \label{lemma_QQ}
	Consider a random persistence diagram $D$ distributed according to $f$ satisfying assumptions $(A1)$-$(A3)$.
	Take $\mathcal{Q}$ of Eq. \eqref{eqn_QQ} and $\mathcal{Q}^*$ of Eq. \eqref{eqn_Qstar} to be the upper singleton probabilities for the kernel density $K_\sigma(Z,D)$ shown in Eq. \eqref{eqn_construction}. 
	Then, there exists $C > 0$ so that $\E^f \LB \mathcal{Q}(\gamma) \RB \leq \E^f \LB \mathcal{Q}^*(\gamma) \RB \leq C \sigma$ for any $\gamma \in I(j,N)$ with $j < N$.
\end{lemma}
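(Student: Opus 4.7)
The plan is to reduce $\mathcal{Q}^*(\gamma)$ to a sum over upper features of the (random) center diagram $\mathscr{D}$, apply a Campbell-type identity against the probability hypothesis density, and finally extract a factor of $\sigma$ from Gaussian tail probabilities. Since $\mathcal{Q}(\gamma)=\mathcal{Q}^*(\gamma)\prod_{k=1}^{j}q^{(\gamma(k))}\le \mathcal{Q}^*(\gamma)$ (each $q^{(\gamma(k))}\in[0,1]$), it suffices to bound $\E^f[\mathcal{Q}^*(\gamma)]$. The hypothesis $j<N$ ensures $\mathcal{Q}^*(\gamma)=\prod_{m\notin\mathrm{range}(\gamma)}(1-q^{(m)})$ has at least one factor; because each factor lies in $[0,1]$, the product is at most any single factor and hence at most the sum
\[
\mathcal{Q}^*(\gamma) \;\le\; \sum_{\xi\in\mathscr{D}^u}(1-q(\xi)), \qquad 1-q(b,d)=\Phi\LP-\tfrac{d-b}{\sigma\sqrt{2}}\RP\!,
\]
a bound that is uniform in $\gamma$ (only the hypothesis $j<N$ is used).

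Next I would apply the Campbell formula implicit in Defn. \ref{defn_PHD} to express the expectation of this sum as an integral against the PHD $F_D$:
\[
\E^f\LB \sum_{\xi\in D^u}(1-q(\xi)) \RB \;=\; \int_{\LC d-b\ge \sigma\RC} F_D(b,d)\,\Phi\LP-\tfrac{d-b}{\sigma\sqrt{2}}\RP db\,dd.
\]
After the substitution $p=d-b$ this factors as $\int_\sigma^\infty \Phi(-p/(\sigma\sqrt{2}))\,G(p)\,dp$, where $G(p):=\int_0^\infty F_D(b,b+p)\,db$. Rescaling $u=p/\sigma$ in the outer integral gives $\int_\sigma^\infty \Phi(-p/(\sigma\sqrt{2}))\,dp = \sigma\int_1^\infty \Phi(-u/\sqrt{2})\,du$, which is a finite multiple of $\sigma$ by Gaussian decay.

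The main obstacle is proving that $G(p)$ is bounded uniformly in $p$. I would mirror the proof of Lemma \ref{band_diag}: assumption (A2) provides a uniform bound $F_D\le B$ on any compact piece of $W$, and assumption (A3), applied after integrating out the auxiliary variables $\xi_2,\dots,\xi_N$ in the series expression for $F_D$, yields polynomial decay $F_D(\xi)\lesssim \|\xi\|^{-2}$ for large $\|\xi\|$. Splitting the $b$-integral at the crossover radius where the two bounds balance then gives $G(p)\le G_{\max}$, independent of $p$. Combining with the $\sigma$ produced by the outer integral yields $\E^f[\mathcal{Q}^*(\gamma)]\le C\sigma$ and hence $\E^f[\mathcal{Q}(\gamma)]\le C\sigma$ as desired.
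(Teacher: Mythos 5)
Your argument is correct, and it is organized genuinely differently from the paper's, even though the two proofs end up evaluating essentially the same integral. The paper bounds $\mathcal{Q}^*(\gamma)$ by the single factor $1-q(p_{\min}(D))$ attached to the least persistent feature, then runs a layer-cake computation $\E^f[1-q(p_{\min})]=\int_0^\infty \P^f[p_{\min}<p]\,q'(p)\,dp$, controls the sublevel probabilities by Markov's inequality together with Lemma \ref{band_diag} (i.e., $\P^f[\Delta_0^p\cap D\neq\emptyset]\le \E^f\Ln\Delta_0^p\cap D\Rn\le Cp$), and finishes with the explicit Gaussian integral $\int_0^\infty p\,e^{-p^2/4\sigma^2}\,dp=2\sigma^2$. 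You instead pass from the non-surjectivity of $\gamma$ to the sum $\sum_{\xi\in\mathscr{D}^u}(1-q(\xi))$ and apply a Campbell identity to obtain $\int(1-q(d-b))F_D(b,d)\,db\,dd$; applying Fubini to the paper's layer-cake bound yields exactly this integral (without your restriction to $d-b\ge\sigma$), so the two estimates coincide and yours is, if anything, marginally tighter. The trade-offs are these: your route bypasses the auxiliary random variable $p_{\min}$ and Lemma \ref{band_diag} entirely, which is more direct, but it (i) invokes the Campbell formula for general nonnegative integrands, a routine but unstated extension of the defining property $\E\Ln D\cap U\Rn=\int_U F_D$ in Defn.~\ref{defn_PHD} (the paper only ever needs indicator functions, via Markov), and (ii) reproves inside the lemma the $(A2)$/$(A3)$ control $F_D\le\min\LC C_2,\,C_3\LN\xi\RN^{-2}\RC$ that the paper isolates once in Lemma \ref{band_diag} and then reuses elsewhere in the proof of Theorem \ref{thm_KDE} (for the pmf $\nu$ via Corollary \ref{nu} and for the mismatch number). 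Your uniform bound on $G(p)$ correctly closes the one genuine analytic obstacle in your route, so the proposal stands as a complete alternative proof.
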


\begin{proof}
	Since every $q^{(k)} \in (0,1)$, we have that $\mathcal{Q}(\gamma) \leq \mathcal{Q}^*(\gamma)$;
	and furthermore, since $\gamma \in I(j,N)$ are not onto when $j < N$, each product $\mathcal{Q}^*$ is bounded by one of the terms of the $(1-q_i^{(k)})$ type. 
	By construction, these terms depend monotonically upon a feature's persistence, and the maximum (over all indices $j<N$ and functions $\gamma$) is tied to the least persistent feature of $\mathscr{D}_i^u$.
	
	For a feature $(b,d)$ of persistence $p=d-b$, we define $q(p) := \int_{-p/(\sqrt{2}\sigma)}^\infty \frac{1}{\sqrt{2 \pi}} e^{-x^2/2} dx$ in concordance with Eq. \eqref{eqn_nonempty};
	or in terms of the error function $\Phi$, $q(p) = \frac{1}{2}\LP1+\Phi\LP \frac{p}{2\sigma}\RP \RP$. 
	Define the minimal persistence as $p_{\min}(Z) = \sup \LC p : \Ln \Delta_0^p \cap Z \Rn = \emptyset \RC$ which satisfies $p_{\min}(Z) \geq p$ if and only if $\Ln \Delta_0^p \cap Z \Rn = \emptyset$.  
	In turn, we may bound $\mathcal{Q}^*(\gamma) \leq (1-q(p_{\min}(D))$ independently of $\gamma$. 
	By Lemma \ref{band_diag}, there is $C>0$ such that $\P^f\LB\Ln \Delta_0^{\sigma} \cap D\Rn \neq \emptyset \RB \leq \E^f\LB\Ln \Delta_0^{\sigma} \cap D\Rn \RB \leq C\sigma$, which controls the distribution of the minimal persistence.
	
	In particular, $q'(p) = \frac{1}{2\sigma\sqrt{\pi}} e^{-p^2/4\sigma^2}$ by the fundamental theorem of calculus. 
	The control of Lemma \ref{band_diag} and the fact that $p_{\min}(Z) \geq 0$ also allows us to utilize integration via the probability of sublevel sets.
	Take $g(p) = 1 - q(p)$ so that $\lim_{p\goto \infty} g(p) = 0$. 
	Specifically, since  $\mathcal{Q}^*(\gamma) \leq (1-q(p_{\min}(D))$, and using the fundamental theorem of calculus then Fubini's theorem, we have:
	\begin{equation} \label{eqn_Qlayers}
	\begin{split}
		\E^f[\mathcal{Q}^*(\gamma)] \leq \int_{\W} g(p_{\min}(Z))f(Z) \delta Z 
		&= \int_{\W} \LP \int_{\infty}^{p_{\min}(Z)} g'(p) dp \RP f(Z) \delta Z \\
		= \int_\infty^0 \LP \int_{\LC Z : p_{\min}(Z) < p \RC} f(Z) \delta Z \RP g'(p) dp 
		&= \int_0^{\infty} \LP \P^f [p_{\min} < p] \RP q'(p) dp .
	\end{split}
	\end{equation}
	
	We now further bound the expectation in Eq. \eqref{eqn_Qlayers}.
	Replacing terms with their definitions and using the bound control from Lemma \ref{band_diag} we obtain:
	
	\begin{align*}
	\E^f \LB \mathcal{Q}^*(\gamma) \RB  &\leq \int_0^\infty \P^f(\Delta_0^p \cap D \neq \emptyset) \frac{1}{2\sigma\sqrt{\pi}} e^{-p^2/4\sigma^2} dp \\
	&\leq \frac{C}{2\sigma\sqrt{\pi}} \int_0^\infty p e^{-(p/2\sigma)^2} dp 
	= \frac{C}{2\sigma\sqrt{\pi}} \LB -2\sigma^2e^{-p^2/4\sigma^2} \RB_{p=0}^\infty = \frac{C}{\sqrt{\pi}} \sigma.
	\end{align*}
\end{proof}

\textbf{Proof of Theorem \ref{thm_KDE}.} \\
For convenience, we denote the upper cardinalities by $N_i = \Ln \mathscr{D}_i^u \Rn$ and total cardinalities by $M_i = \Ln \mathscr{D}_i \Rn$ for the sample persistence diagrams. 
Denote the set of strictly increasing functions from $\LC 1, ..., j\RC$ into $\LC 1, ..., N_i \RC$ by $I(j,N_i)$. 
Here we use `$\textrm{id}$' to denote the identity map, where $I(N_i,N_i) = \LC \textrm{id} \RC$.
The proof is organized by splitting the kernel densities into several pieces and then controlling each piece separately. 

First, we separate the kernel $K_\sigma(Z,\mathscr{D}_i)$, defined in Eq. \eqref{eqn_construction}, into three portions, $A_i$, $B_i$, and $C_i$, according to the upper cardinality $j$:
\begin{align}\label{ABC}
\begin{split}
K_\sigma(Z,\mathscr{D}_i) &= \sum_{j=0}^{N_i} \nu_i(N-j)  \sum_{\gamma \in I(j,N_i)} \mathcal{Q}_i(\gamma) \prod_{k=1}^j  p_i^{(\gamma(k))}(\xi_{k}) \prod_{k=j+1}^N  p_i^\ell(\xi_{k}) \\
&=  \nu_i(N-N_i) \mathcal{Q}_i(\id) \prod_{k=1}^{N_i}  p_i^{(k)}(\xi_{k}) \prod_{k=N_i+1}^N  p_i^\ell(\xi_{k}) \\
&+ \sum_{j=0, j \neq N}^{N_i-1} \nu_i(N-j) \sum_{\gamma \in I(j,N_i)} \mathcal{Q}_i(\gamma) \prod_{k=1}^j  p_i^{(\gamma(k))}(\xi_{k}) \prod_{k=j+1}^N  p_i^\ell(\xi_{k}) \\
&+ \one_{\LC n \in \N : n < N_i \RC}(N) \nu_i(0) \sum_{\gamma \in I(N,N_i)} \mathcal{Q}_i(\gamma) \prod_{k=1}^N  p_i^{(\gamma(k))}(\xi_{k}) \\
&= A_i + B_i + C_i,
\end{split}
\end{align}
where $A_i$ follows from $j = N_i$, $C_i$ follows from $j = N$ ($C_i = 0$ if $N_i \leq N$), and $B_i$ consists of all remaining terms. 

The terms $B_i$ in Eq. \eqref{ABC} are controlled by the lower product $\LB \prod_{k=j+1}^N p_i^\ell(\xi_{k})\RB$.
Since $(1-q_i^{(j)}) \leq 1$ and $\nu_i(N-j) \leq 1$ for any choice of $\gamma$ and $j$, we have that $B_i$ is bounded above by
\begin{equation} \label{eqn_Bbound}
\sum_{j=0,j \neq N}^{N_i-1} \sum_{\gamma \in I(j,N_i)} \LB \prod_{k=1}^j q_i^{(\gamma(k))} p_i^{(\gamma(k))}(\xi_{k}) \prod_{k=j+1}^N  p_i^\ell(\xi_{k}) \RB.
\end{equation}
The bounding sum of Eq. \eqref{eqn_Bbound} consists of restricted $2N$-dimensional Gaussians, with the weights $q_i^{(j)}$ dominating the restriction rescaling in Eq. \eqref{eqn_mod_normal}. 
Fix $\pi \in \Pi_N$ and $j \in \LC 0,..., M - 1\RC \setminus \LC N \RC$. 
Without loss of generality, we treat the case when the permutation $\pi$ is the identity. 
Since our ultimate goal is to control the kernel density estimate $\hat f$, consider the portion of $\sum_{i=1}^n \frac{1}{n} B_i$ for which the cardinalities $M_i = \Ln \mathscr{D}_i \Rn$ are fixed at level $M_i = m \in \LC 0,...,M \RC$.
Now, $m = \Ln \mathscr{D}_i \Rn \geq N_i > j$, so there is some extension for every $\gamma$ within the sum, $\gamma^* \in \Pi_m$. 
Recall that this collection is random because each $\mathscr{D}_i$ is randomly distributed according to $f$, therefore we consider the expectation with respect to this randomness:
$$\E^f\LB\sum_{\LC i: M_i=m \RC} \frac{1}{\Ln \LC i: M_i=m \RC \Rn} \prod_{k=1}^{M_i} q_i^{(\gamma^*(k))}p_i^{(\gamma^*(k))} (\xi_k)\RB \goto f(\xi_1,...,\xi_m),$$
for any point $(\xi_1,...,\xi_m)$ as a $2m$-dimensional Gaussian kernel density estimate with a proper choice of $\sigma = O(n^{-\alpha})$ appropriate for $2M$ (and hence $2m$) dimensions \citep{KDE_book}.
Integrating both sides against the extra coordinates, Assumptions (A2) and (A3) along with the dominated convergence theorem yield 
\begin{equation} \label{B1}
\E^f\LB\sum_{\LC i: M_i=m \RC} \frac{1}{\Ln \LC i: M_i=m \RC \Rn} \prod_{k=1}^{j} q_i^{(\gamma(k))}p_i^{(\gamma(k))} (\xi_k)\RB \goto \int_{W^{m-j}}f(\xi_1,...,\xi_m) d\xi_{j+1} ... d\xi_m,
\end{equation}
which is again bounded via (A2) and (A3). 
Of course, $\Ln \LC i: M_i=m \RC \Rn \leq n$, so taking Eq. \eqref{B1} into account for every $m$ bounds the averaging sum of the upper product: $\frac{1}{n} \sum_{i=1}^n \prod_{k=1}^j  q_i^{(\gamma(k))} p_i^{(\gamma(k))}(\xi_{k})$. \\

Relying on Eq. \eqref{eqn_Bbound}, we must also consider the lower product $\prod_{k=j+1}^N p_i^\ell(\xi_{k})$. 
Since the points $\xi_i$ are fixed, we focus on their minimal persistence $p_{\min} = \min_i(d_i-b_i)$.
Thus, 
$$p_i^\ell(\xi_i) \leq \frac{1}{2\pi\sigma^2} e^{-(b-d)^2/4\sigma^2} \leq \frac{1}{2\pi\sigma^2} e^{-p_{\min}^2/4\sigma^2},$$
and subsequently,
\begin{equation} \label{B2}
\LB \prod_{k=j+1}^N p_i^\ell(\xi_{k}) \RB \leq \frac{1}{(2\pi\sigma^2)^N} e^{-N p_{\min}^2/4\sigma^2} \goto 0,
\end{equation}
as $\sigma \goto 0$, uniformly on any compact subset of $W$ (or $\W$). 
Altogether, Eqs. \eqref{B1} and \eqref{B2} guarantee that the term $\sum_{i=1}^n \frac{1}{n} B_i \goto 0$ as $n \goto \infty$ in the kernel density estimation. \\

Next we focus on the terms $A_i$ in Eq. \eqref{ABC}.
We split the sum $\frac{1}{n} \sum_{i=1}^n A_i$ according to the cardinality of $\mathscr{D}_i$.
Specifically, separate $A_i$ into the cases where $M_i \neq N_i$ or $M_i = N_i$. 
First consider the associated set of indices $\LC i : M_i \neq N_i \RC$ and define the mismatch number $\textrm{MM}(n)$ to be its cardinality. 
Critical to our argument, the mismatch number is random with respect to $f$ because it is defined according to the features in $\mathscr{D}_i$. 
We obtain the following mismatched term:
\begin{equation} \label{eqn_mismatch}
\frac{1}{n} \sum_{\LC i:N_i \neq M_i \RC} A_i \leq \LP \frac{MM(n)}{n} \RP \frac{1}{MM(n)} \sum_{\LC i:N_i \neq M_i \RC} \LB \mathcal{Q}_i(\id) \prod_{k=1}^{N_i} p_i^{(k)}(\xi_{k}) \prod_{k=N_i+1}^N p_i^\ell(\xi_{k}) \RB
\end{equation}
The bounding sum in Eq. \eqref{eqn_mismatch} is split into pieces where $M_i = m$ for each $m$ between $0$ and $M$. 
Using the same strategy yielding Eq. \eqref{B1}, with $MM(n)$ in place of $n$, the sum of the upper product converges to layered integrals of $f$ for each level $m$ and each $N_i < m$ by extending $\gamma = \id$. 
Using the same approach leading to Eq. \eqref{B2}, the lower product vanishes in the limit if $N_i \neq N$, or is an empty product if $N_i = N$; in either case, this factor is bounded. 
Now, according to Lemma \ref{band_diag}, $\P^f(M_i \neq N_i) = \P^f(\mathscr{D}_i \cap \Delta_0^{\epsilon\sigma} \neq \emptyset) \leq C_5 \sigma$;
consequently, $\E^f[MM(n)/n] \goto 0$ and the mismatch terms on left hand side of Eq. \eqref{eqn_mismatch} follow. 

Now consider the indices for which $N_i = M_i$. 
In this case, since $\mathscr{D}_i^\ell$ are empty, $\nu_i = \delta_0$, and the only values which contribute to the sum are for $N_i = N$. 
The remaining portion of the kernel density estimate is given by
\begin{equation} \label{eqn_match}
\begin{split}
\frac{1}{n} \E^f \hspace{-3mm} \sum_{\LC i: N_i= M_i \RC} \hspace{-4mm} A_i 
&= \frac{1}{n} \E^f \LB \sum_{\LC i: N_i= M_i \RC} \hspace{-2mm} \LP \mathcal{Q}_i(\id) \prod_{k=1}^{N}  p_i^{(k)}(\xi_{k}) \RP \RB \\
&= \frac{1}{n} \E^f \LB \sum_{\LC i: N_i= M_i \RC} \hspace{-2mm} \LP \prod_{k=1}^{N}  q_i^{(k)} p_i^{(k)}(\xi_{k}) \RP \RB.
\end{split}
\end{equation}
As shown, the terms in Eq. \eqref{eqn_match} are restricted $2N$ dimensional Gaussians. 
It is known \citep{KDE_book} that restricted Gaussian kernel density estimates like $\LB \prod_{k=1}^{N}  q_i^{(k)} p_i^{(k)}(\xi_{k})\RB$ converge (uniformly on compactly contained sets) to the true value of the chosen draws $\mathscr{D}_i$  for a suitable choice of $\alpha$ in $\sigma = O(n^{-\alpha})$ as restricted by $N \leq M$. 
After correcting for the samples with $N_i < M_i = N$, the samples $\mathscr{D}_i$ are treated as random draws from $f(D | \Ln D \Rn = N)$.
Consequently, we may conclude that the target distribution associated with $\LB \prod_{k=1}^{N}  q_i^{(k)} p_i^{(k)}(\xi_{k})\RB$ is the rescaled $\frac{1}{f(N)}f(\xi_1,...,\xi_N)$, where $f(N) := \P^f(\Ln D \Rn = N)$. 
This rescaling for the conditional pdf $f(D | \Ln D \Rn = N)$ is necessary to reweight according to Prop. \ref{prop_belief_layers}.

Application of classical kernel density estimate results require division by the cardinality of the draw, when in context $n$ is generally larger than this cardinality.
Thus, we must again consider the cases wherein $N_i \neq M_i$.
Consequently, we find that the expectation for the ratio between the true draw cardinality and $n$ is given by $\P^f(\Ln D \Rn = N) + O(\sigma)$ according to Lemma \ref{band_diag}. 
Indeed, this ratio converges to $f(N) := \P^f(\Ln D \Rn = N)$. 
After this final correction, we have shown that $\frac{1}{n} \sum_{i=1}^n A_i$ approach the true pdf $f(\xi_1,...,\xi_N)$. 

Lastly, we need only to control the terms $C_i$ from Eq. \eqref{ABC}. 
We begin by bounding the probability mass functions $\nu_i$ by 1 and considering only terms for which the characteristic function is nonzero:
\begin{equation} \label{Cbound}
\frac{1}{n} \sum_{i=1}^n  C_i = \frac{1}{n} \sum_{\LC i: N < N\RC} \nu_i(0) \sum_{\gamma \in I(N,N_i)} \mathcal{Q}_i(\gamma) \prod_{k=1}^N  p_i^{(\gamma(k))}(\xi_{k})
\leq \frac{1}{n} \sum_{\LC i: N < N_i \RC} \sum_{\gamma \in I(N,N_i)} \mathcal{Q}_i(\gamma) \prod_{k=1}^N  p_i^{(\gamma(k))}(\xi_{k}). \\
\end{equation}

Next, we split the term $\mathcal{Q}(\gamma)$ according to Eq. \eqref{eqn_QQ} and apply Lemma \ref{lemma_QQ} to the upper bound in Eq. \eqref{Cbound} to obtain the larger upper bound

\begin{equation}
\begin{split} \label{CC} 
\frac{1}{n} \sum_{\LC i: N < N_i \RC} \sum_{\gamma \in I(N,N_i)} \mathcal{Q}^*(\gamma) \prod_{k=1}^N q_i^{(\gamma(k))} p_i^{(\gamma(k))}(\xi_{k}) 
\leq C \LB \frac{1}{n} \sum_{\LC i: N < N_i \RC} \sum_{\gamma \in I(N,N_i)} \prod_{k=1}^N q_i^{(\gamma(k))} p_i^{(\gamma(k))}(\xi_{k}) \RB \sigma.
\end{split}
\end{equation}

The expectation of the bracketed terms in Eq. \eqref{CC} converges in a fashion identical to the terms $\frac{1}{n} \sum_{i=1}^n A_i$.
Since these terms are multiplied by $\sigma$, altogether $\LB \frac{1}{n} \sum_{i=1}^n C_i \RB$ vanishes in the limit as $n \goto \infty$. 
Putting together the limits of each portion built from $K_\sigma(Z,\mathscr{D}_i) = A_i + B_i + C_i$, the theorem follows. \hfill $\blacksquare$

\section{Proofs from Section 4.3}

\subsection{Proof of Proposition \ref{prop_large_dev}}
Note that the lower bound integral is the probability for a pair $z = (x,y)$ of independent standard normal variables to lie in $B((0,0), \delta)$. 
In order to bound the bottleneck distance $W_\infty(D,\mathscr{D}) < \delta \sigma$, it is sufficient that each constituent feature does not stray too far from either its corresponding center or the diagonal (see Fig. \ref{heat} for reference).
Specifically, we follow Defn. \ref{defn_bottleneck} to build a correspondence between $D$ and $\mathscr{D}$ so that the maximal distance undercuts $\delta\sigma$, and thus the (potentially smaller) bottleneck distance is also bounded by $\delta\sigma$. 
For clarity, features in $D$ are denoted using $\zeta$ while features in $\mathscr{D}$ are denoted using $\xi$. 

Consider each feature $\xi^j \in \mathscr{D}^u = \mathscr{D} \cap \LC d - b \geq \sigma \RC$ and its associated random singleton diagram $D^j = \LC \zeta^j \RC$ or $\emptyset$ as in Defn. \ref{defn_upper_singletons}. 
Assuming the disc neighborhood $B_j = B(\xi^j,\delta\sigma)$ is contained in the wedge $W = \LC (b,d) \in \R^2 : d > b \geq 0 \RC$, 
the density of $z^j = \frac{\zeta^j - \xi^j}{\sigma}$ is a multiple ($>1$) of the density of the Gaussian random variable $z \sim N((0,0),I_2)$ in the region where $\zeta^j \in B_j$ (or equivalently $z^j \in B((0,0),\delta)$).
Thus, we obtain $\P \LB \zeta^j \in B(\xi^j,\delta\sigma) \RB \geq \P \LB\Ln z \Rn \leq \delta\RB$ for the probability that $\zeta^j$ can be mapped to $\xi^j$ in a bounding correspondence. 
If $B_j \nsubseteq W$, this probability is even higher because $\xi^j$ can be mapped to the diagonal and thus the case $D^j = \emptyset$ is included. 

Now take into account the features in $\mathscr{D}^\ell$ and the associated random features $D^\ell$ as in Defn. \ref{defn_lower_cluster}. 
Although the features in $D^\ell$ are not necessarily independent, we may assume without loss of generality the worst case, in which the maximal cardinality is drawn.
Given a fixed cardinality, the draws of $D^\ell$ are independent. 
Since any feature may be mapped to the diagonal in the bottleneck distance, a bounding correspondence can be obtained whenever the draws in $D^\ell$ and features in $\mathscr{D}^\ell$ are close enough to the diagonal (within $\delta\sigma$).
Indeed, the features in $\mathscr{D}^\ell$ are by definition distance $\sigma \leq \delta\sigma$ from the diagonal.
Restricting to $W$, the pdf for the draws of $D^\ell = \LC (b_j,d_j) \RC_{j=1}^{\Ln N_\ell \Rn}$ is given by 
$p^\ell(b,d) = \frac{1}{\pi N_\ell \sigma^2} \sum_{j=1}^{N_\ell} e^{-\LP \LP x-\frac{b_j+d_j}{2}\RP^2 + \LP y-\frac{b_j+d_j}{2}\RP^2\RP/2\sigma^2}$. 
Consider the sets $U_j = B\LP\LP \frac{b_j+d_j}{2},\frac{b_j+d_j}{2}\RP, \delta\sigma \RP $ and $\mathcal{U} = \bigcup_{j=1}^{N_\ell} U_j$.
For each lower feature $(b,d) \in D^\ell$, mapping to the diagonal yields a bounding correspondence and the associated probability is bounded below by
$\P[d-b \leq \delta\sigma] = \int_{\Delta_0^{\delta\sigma}} p^\ell(x,y) \, dx \, dy \geq \int_{W \cap \mathcal{U}} p^\ell(x,y) \, dx \, dy $ since $W \cap U \subset \Delta_0^{\delta\sigma} = \LC (b,d) \in W : d-b \leq \delta\sigma\RC$. 
Next, we restrict the lower bounding integral for each term of $p^\ell$ to its matching subset $U_j$ and change variables to attain the desired form:
\begin{align*}
\int_{W \cap \mathcal{U}} p^\ell(x,y) \, dx \, dy &\geq \sum_{j=1}^{N_\ell} \int_{U_j} \frac{1}{2\pi N_\ell \sigma^2} e^{-\LP \LP x-\frac{b_j+d_j}{2}\RP^2 + \LP y-\frac{b_j+d_j}{2}\RP^2\RP/2\sigma^2} \, dx \, dy \\
&= \int_{B((0,0),\delta)} \frac{1}{2\pi} e^{-(x^2+y^2)/2} \,dx \, dy.
\end{align*}
Overall, this argument shows that with probability at least $\P(\Ln \bm{z} \Rn \leq \delta)^M$ there is a correspondence which bounds the bottleneck distance by $\delta\sigma$ and the result follows.

\subsection{Proof of Lemma \ref{lemma_bottleneck_control}}
Choose an arbitrary persistence diagram $\mathscr{D}$.
Since bottleneck distance is defined according to the sup-norm (see Eq. \eqref{eqn_bottleneck}), the bottleneck distance to the null persistence diagram (i.e., without any features) is precisely half the maximal persistence. 
Thus, we begin by showing that the maximal persistence moment is finite. 
Taking $Z = \LC \xi_1,...,\xi_N \RC$ with $\xi_i = (b_i,d_i,k_i)$, we have: 
\begin{equation} \label{eqn_persis_control_1}
\int_{\W} \max(d_i -b_i) \delta Z \leq \int_{\W} \LN Z \RN f(Z) \delta Z
\end{equation}
since $\max(d_i - b_i) \leq \max\LP\LN (b_i,d_i) \RN\RP \leq \LN Z \RN$.
Consider a compact set $K \subset \W$ which contains a neighborhood of the origin. 
Given assumptions $(A2)^*$ and $(A3)^*$, Eq. \eqref{eqn_persis_control_1} is bounded by the following finite expression. 
\begin{equation} \label{eqn_persis_control_2}
\int_{\W} \LN Z \RN f(Z) \delta Z \leq \int_{K} C_2 \LN Z \RN \, \delta Z + \sum_{N=1}^M \int_{h_N^{-1}(h_N(K)^c)} C_3 \LN Z \RN^{-2N-1} d\xi_1...d\xi_N.
\end{equation}

Lastly, we take advantage of the Minkowski inequality, which holds trivially for set integration since it is a linear combination of Lebesgue integrals. 
Indeed, the MAD centered at $\mathscr{D}_0$ is bounded as follows.
\begin{equation} \label{eqn_minkowski}
\int_{\W} W_\infty(\mathscr{D}_0,Z) f(Z) \delta Z \leq \int_{\W} W_\infty(\mathscr{D}_0,\emptyset) f(Z) \delta Z + \int_{\W} W_\infty(\emptyset,Z) f(Z) \delta Z
\end{equation}
where $\emptyset$ represents the null persistence diagram and the distance to the null persistence diagram is precisely half the maximal persistence. 
Since $f$ integrates to 1, the first integral simplifies to the finite distance $W_\infty(\mathscr{D}_0,\emptyset)$, while the second integral is finite according to Eq. \eqref{eqn_persis_control_2}.

\subsection{Proof of Theorem \ref{thm_moment}}
The MAD of $f$ with origin $\mathscr{D}_0$ is finite by Lemma \ref{lemma_bottleneck_control}.
To show convergence of the estimate, we begin by adding and subtracting the integral of the sample estimator for the MAD. 
Then, we split the sum into $n+1$ terms via the triangle inequality to obtain
\begin{equation} \label{eqn_moment_split}
\begin{split}
\hspace*{10mm} &\hspace*{-10mm} \Ln \int_{\W} W_\infty(\mathscr{D}_0,Z) f(Z) \delta Z - \int_{\W} W_\infty(\mathscr{D}_0,Z) \hat f(Z) \delta Z \Rn \\
&\leq \Ln \int_{\W} W_\infty(\mathscr{D}_0,Z) f(Z) \delta Z - \frac{1}{n} \sum_{i=1}^n \int_{\W} W_\infty(\mathscr{D}_0,\mathscr{D}_i) K_\sigma(Z,\mathscr{D}_i) \delta Z \Rn \\
&+ \frac{1}{n} \sum_{i=1}^n \Ln \int_{\W} W_\infty(\mathscr{D}_0,Z) K_\sigma(Z,\mathscr{D}_i) \delta Z - \int_{\W} W_\infty(\mathscr{D}_0,\mathscr{D}_i) K_\sigma(Z,\mathscr{D}_i) \delta Z \Rn.
\end{split}
\end{equation}

The term of the upper bound in Eq. \eqref{eqn_moment_split} trivially simplfies to obtain the sample estimator for the MAD:
\begin{equation} \label{eqn_moment_1}
\begin{split}
\hspace{47mm} & \hspace*{-47mm} \Ln \int_{\W} W_\infty(\mathscr{D}_0,Z) f(Z) \delta Z - \sum_{i=1}^n \frac{1}{n} \int_{\W} W_\infty(\mathscr{D}_0,\mathscr{D}_i) K_\sigma(Z,\mathscr{D}_i) \delta Z \Rn \\
&= \Ln \int_{\W} W_\infty(\mathscr{D}_0,Z)f(Z) \delta Z - \frac{1}{n} \sum_{i=1}^n W_\infty(\mathscr{D}_0,\mathscr{D}_i) \Rn.
\end{split}
\end{equation}
The MAD sample estimator converges since the MAD is finite, and thus this term vanishes as $n \goto \infty$.
The remaining term of the upper bound in Eq. \eqref{eqn_moment_split} is further bounded via the reverse triangle inequality; specifically,
\begin{equation} \label{eqn_moment_2}
\begin{split}
\hspace{70mm} & \hspace*{-70mm} \sum_{i=1}^n \frac{1}{n} \Ln \int_{\W} W_\infty(\mathscr{D}_0,Z) K_\sigma(Z,\mathscr{D}_i) \delta Z - \int_{\W} W_\infty(\mathscr{D}_0,\mathscr{D}_i) K_\sigma(Z,\mathscr{D}_i) \delta Z \Rn \\
&\leq \sum_{i=1}^n \frac{1}{n} \Ln \int_{\W} W_\infty(\mathscr{D}_i,Z) K_\sigma(Z,\mathscr{D}_i) \delta Z \Rn.
\end{split}
\end{equation}

Toward bounding Eq. \eqref{eqn_moment_2}, choose a threshold parameter $a = O(\sigma^\beta)$ for some $\beta \in (0,1)$, so that $a \goto 0$ but $a/\sigma \goto \infty$ in the sample size (and bandwidth) limit. 
Next, take $A_i = \LC Z \subset W : W_\infty(Z,\mathscr{D}_i) \leq a \RC$ and split the integral between $A_i$ and its complement as
\begin{align*}
\int_{\W} W_\infty(\mathscr{D}_i,Z) K_\sigma(Z,\mathscr{D}_i) \delta Z = \int_{A_i} W_\infty(\mathscr{D}_i,Z) K_\sigma(Z,\mathscr{D}_i) \delta Z + \int_{A_i^c} W_\infty(\mathscr{D}_i,Z) K_\sigma(Z,\mathscr{D}_i) \delta Z.
\end{align*}
The integral over $A_i$ is trivially bounded by $a$.
Integration over the complementary events is controlled via layered integration along with Prop. \ref{prop_large_dev}.
For $a/\sigma > 1$, which occurs when $n$ is large enough, we obtain
\begin{equation} \label{eqn_layer_cake}
\begin{split}
\int_{A_i^c} W_\infty(\mathscr{D}_i,Z) K_\sigma(Z,\mathscr{D}_i) \delta Z &= a\, \P^i \LB W_\infty(\mathscr{D}_i,Z) > a\RB + \int_a^\infty \P^i \LB W_\infty(\mathscr{D}_i,Z) > b \RB db \\
& \leq a \LP \P[\Ln z \Rn > a/\sigma]^M \RP + \int_a^\infty \LP \P[\Ln z \Rn > b/\sigma]^M \RP db,
\end{split}
\end{equation}
where $z = (x,y)$ is distributed as a pair of independent standard normals. 
We chose $a/\sigma = O(\sigma^{\beta-1}) \goto \infty$ and so $\P(\Ln z \Rn < a/\sigma) \goto 0$ exponentially fast and the last term vanishes quickly as $\sigma \goto 0$.

Indeed, let $g(Z) = W_\infty(\mathscr{D}_i, Z)$, then by the fundamental theorem of calculus and Fubini's theorem:
\begin{align*}
	\int_{A_i^c} g(Z) K_\sigma(Z,\mathscr{D}_i) \delta Z
	&= \int_{\LC Z : g(Z) > a \RC} \LP \int_0^{g(Z)} db \RP K_\sigma(Z,\mathscr{D}_i)\delta Z \\
	&= \int_0^\infty \int_{\LC Z:g(Z) > a \textrm{ and } g(Z) > b \RC} K_\sigma(Z, \mathscr{D}_i) \delta Z db \\
	&= \int_0^\infty \P^f[g(Z) > \max \LC a, b \RC ] db \\
	&= a \P^f[g(Z) > a] + \int_a^\infty \P^f[g(Z) > b ] db.
\end{align*}
Applying Prop. \ref{prop_large_dev} changes the probabilities on $g(Z)$ to normal tail probabilities.
Thus, both bounding terms in Eq. \eqref{eqn_moment_split} converge to zero and thus the kernel estimate converges to the true mean absolute deviation. 

\section{Extra Examples}
	Here we present two more examples of constructing a kernel density estimator (KDE) according to the kernel given in Eq. \eqref{eqn_construction}. 
	In these examples, we view slices of the KDE at various sample sizes and bandwidths. 
	In the first example, the underlying dataset consists of points sampled from a circle with relatively large noise, in contrast to Ex. \ref{ex2} in Subsection \ref{subsect:Examples}.
	This example demonstrates how, despite the symmetry of the unit circle and Gaussian noise of the underlying data, the resulting persistence diagram KDE and eventually its limiting behavior lacks Gaussian structure.
	In the second example, the underlying dataset consists of points sampled from a pinched circle.
	The underlying dataset has only one loop, but the persistence diagrams typically have a feature of long persistence and another feature of moderate persistence.
	Both features are captured by the KDE, and are clearly separable into distinct features despite their adjacency.
	To keep the presentation relatively simple to interpret, the same slices will be presented for each KDE (see Rmk. \ref{rmk_slices}).
	This allows one to track the convergence of the KDE as the sample size of persistence diagrams, $n$, increases and the bandwidth, $\sigma$, decreases.
	
	\begin{example} \label{S1} \rm
		Consider random underlying datasets each consisting of 25 points sampled uniformly from the unit circle, which are then perturbed by Gaussian noise with variance $(1/6)^2 I_2$, and their associated $\cech$ persistence diagrams for degree of homology $k=1$.
		An example dataset and its associated $\cech$ persistence diagram for $k=1$ are shown in Fig. \ref{fig_PD_UD_circ}.
		
		\begin{figure} 
			\begin{multicols}{2}
				\begin{center}
					\includegraphics[scale=0.5]{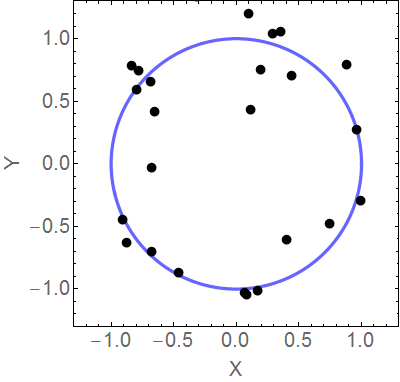} \\ (a) \\
					\includegraphics[scale=0.5]{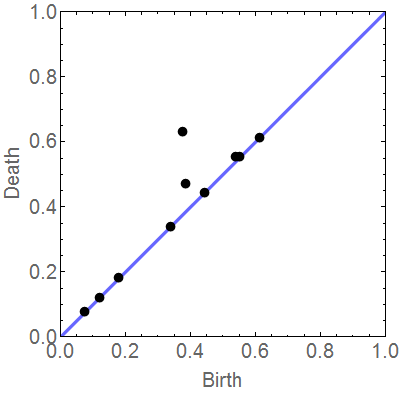} \\ (b)
				\end{center}
			\end{multicols}
			\caption{(a) An example of the underlying datasets generated for Ex. \ref{S1}.
					Each dataset consists of 25 points sampled uniformly on the unit circle which are then perturbed by i.i.d. Gaussian noise with variance $(1/6)^2 I_2$.
					(b) The persistence diagram associated to the $\cech$ filtration of the dataset}
			\label{fig_PD_UD_circ}
		\end{figure}
		
		Since the underlying datasets are sampled from a perfect circle perturbed by large noise, one expects the associated 1-homology to have a single persistent feature with several smaller features caused by noise. 
		We consider several KDEs as we simultaneously increase the number of persistence diagrams and narrow the bandwidth.
		The bandwidth was chosen to vary according to Silverman's rule of thumb (Silverman, 1986).
		Since the KDEs are defined on $\bigcup_N W^N$ for several input cardinalities $N$, we present $\hat f_{n, \sigma}(Z)$ in multiple slices by fixing a cardinality and then fixing all but one input feature, as explained in Rmk. \ref{rmk_slices}.
		For example, $g(\xi) = \hat f_{n, \sigma}(\xi, \xi_2',...,\xi_N')$ for fixed $\xi_j'$ is a function on $W$ and represents a slice of the local KDE on $W^N$.
		This progression of KDEs can be seen in Fig. \ref{fig_KDE_circ}, wherein the same slices are viewed for each choice of $n$ and $\sigma$.
		Modes of each slice are used as fixed features in the slices of higher cardinality inputs;
		consequently, the presented slices capture portions of the KDE with high probability density.
		
		\begin{figure}
			\begin{multicols}{3}
				\raisebox{27mm}{\large(1)}\includegraphics[scale = 0.35]{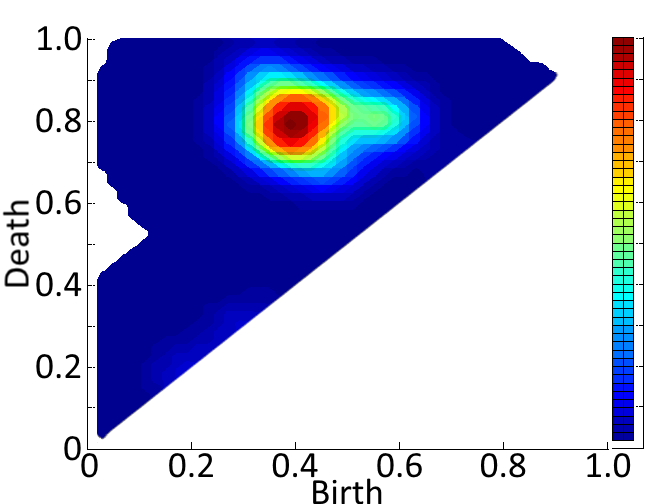}    \\
				\raisebox{27mm}{\large(2)}\includegraphics[scale = 0.35]{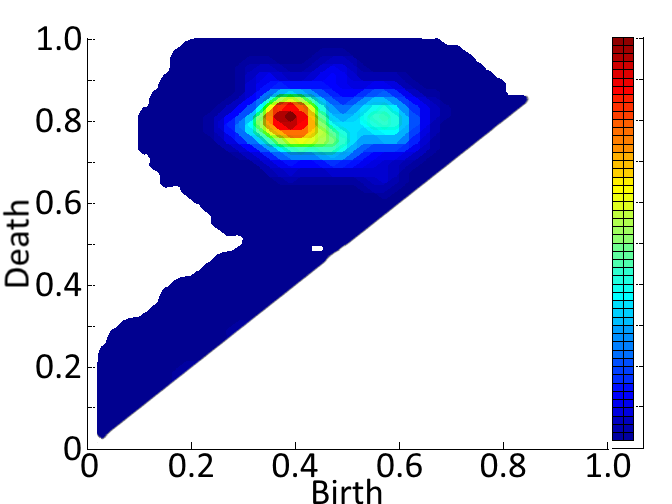}   \\
				\raisebox{27mm}{\large(3)}\includegraphics[scale = 0.35]{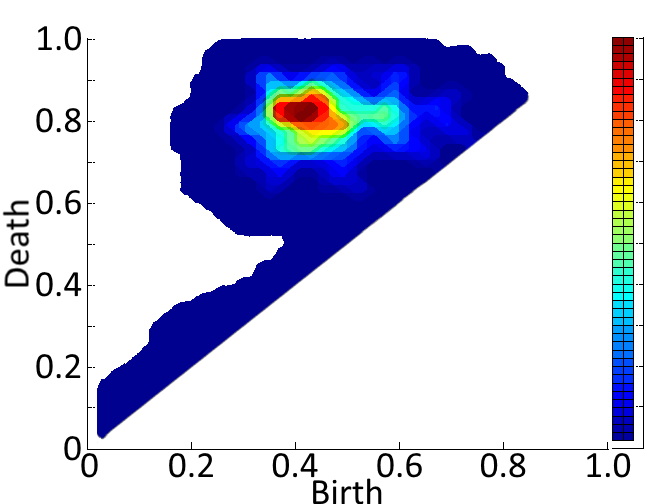}   \\
				\includegraphics[scale = 0.35]{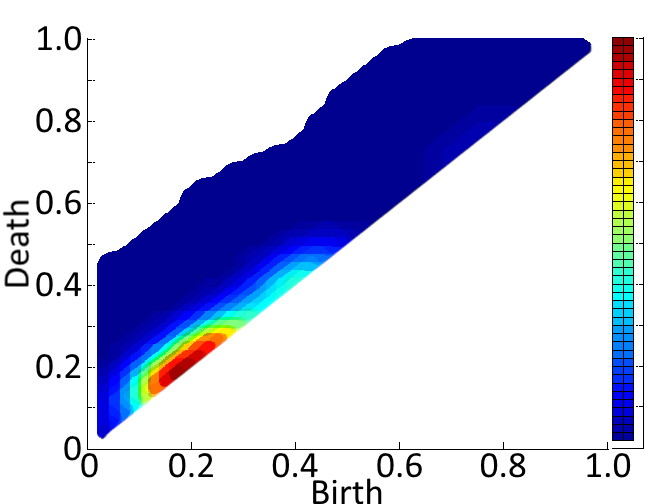}  \\
				\includegraphics[scale = 0.35]{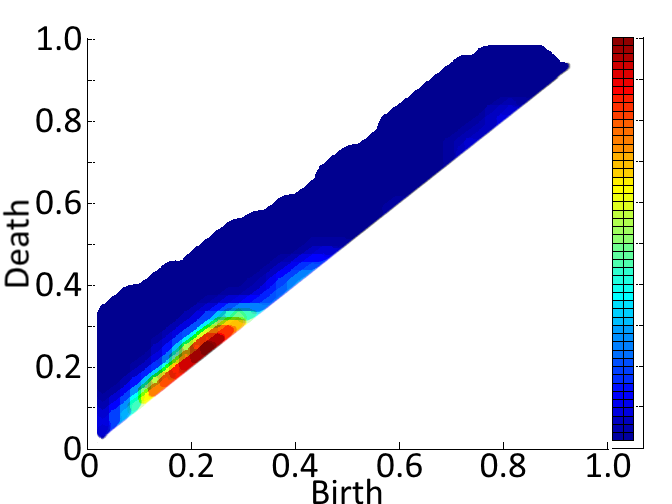} \\
				\includegraphics[scale = 0.35]{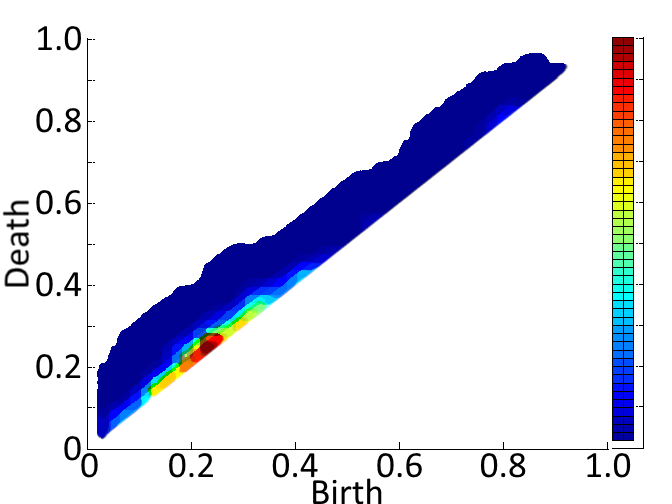} \\
				\includegraphics[scale = 0.35]{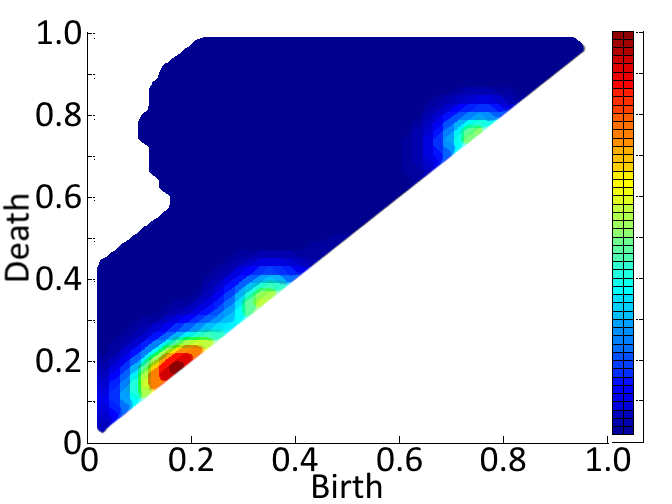}   \\
				\includegraphics[scale = 0.35]{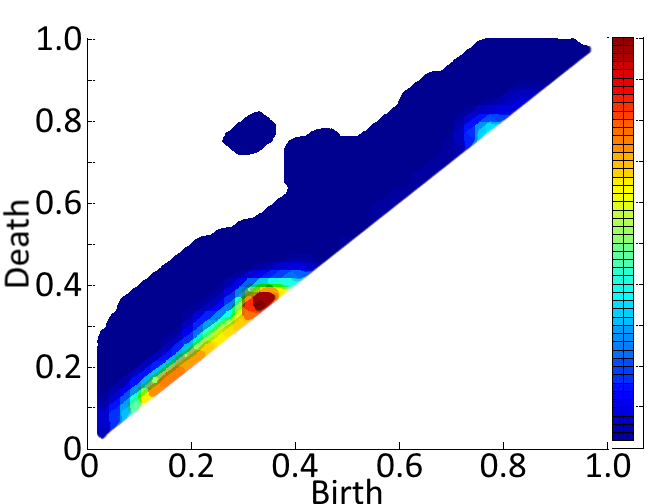} \\
				\includegraphics[scale = 0.35]{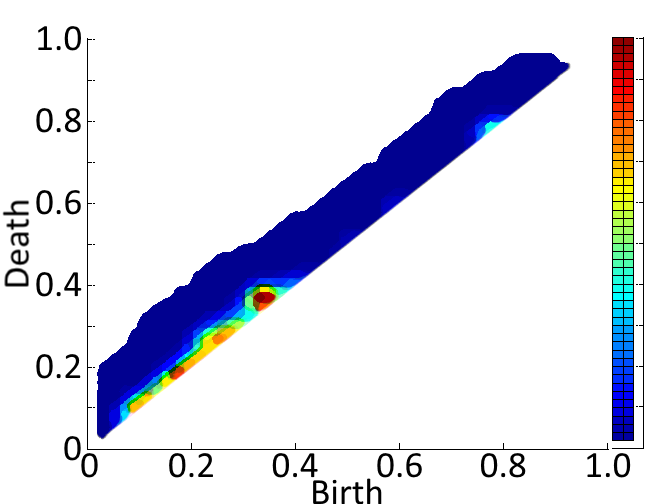}
			\end{multicols}
			\caption{Key slices of persistence diagram KDEs for Ex. \ref{S1}.
					Each column is a particular slice, while each row is a particular KDE: (1) $n = 20$ and $\sigma = 0.05$, (2) $n = 100$ and $\sigma = 0.03$, and (3) $n = 300$ and $\sigma = 0.02$.
					The first column are the local KDEs $\hat f_{n,\sigma}((b,d))$ evaluated at a diagram with only one feature.
					The second column are the local KDEs $\hat f_{n,\sigma}((b,d), (0.4,0.8))$ evaluated at a diagram with two features but one feature fixed.
					The third column are the local KDEs $\hat f_{n,\sigma}((b,d), (0.56,0.8))$ evaluated at a diagram with two features but with a different feature fixed.
					Overall, this figure demonstrates convergence of the KDE as the number of persistence diagrams increases and the bandwidth decreases.
					Indeed, the two modes on the left already stabilize after $n=300$, and the spread is no longer determined by the kernel bandwidth.} \label{fig_KDE_circ}
		\end{figure}
		
		Fig. \ref{fig_KDE_circ} demonstrates slower convergence of the KDEs than in Ex. \ref{ex2}, which is expected due to larger noise.
		Though the tail behavior of the KDEs remains Gaussian in nature, the limiting density is not Gaussian.
		In fact, the KDEs $\hat f(n, \sigma)$ are neither symmetric nor unimodal, even for a single input.
		Much like the kernel densities themselves, each KDE separates into upper and lower densities on $W$; however, the lower density varies depending on which upper mode is fixed in $\hat f(\xi, \xi_j')$.
		
		While the underlying dataspace is the unit circle in both Ex. \ref{ex2} and Ex. \ref{S1}, the precise presentation of the underlying data effects the pdf of the associated random persistence diagram.
		Precisely, two primary parameters for the underlying dataset are involved: (i) the scale of Gaussian noise and (ii) the sample size of the underlying dataset.	
		The persistence diagram (for degree of homology $k=1$) associated with the `'true' unit circle is not random and has a single feature at $(b,d) = (0,1)$.
		The random, discrete nature of these examples creates persistence diagrams which deviate from this `truth.'
		
		As described for Ex. \ref{ex2}, with very little noise all the sample points lie close to the unit circle, and so the $\cech$ complex becomes contractible at a radius $r \approx 1$.
		Consequently, the death value of the main topological feature is near the `true' value (e.g., the mode in Fig. \ref{fig_KDE_simp} is $d = 0.98 \approx 1$).
		However, since we are working with discrete points, this feature does not appear immediately: the gaps in the circle need to be filled in (this is even true without noise).
		In Ex. \ref{ex2}, the sample size is only 10, so the birth value is typically much larger than the `true' value (e.g., the mode in Fig. \ref{fig_KDE_simp} is $b = 0.77 >> 0$).
		
		In comparison to Ex. \ref{ex2}, Ex. \ref{S1} has relatively more noise; this results in a random persistence diagram with smaller death values for the main feature (e.g., the mode in Fig. \ref{fig_KDE_circ} is $d = 0.8 < 0.98$).
		It is evident from Fig. \ref{fig_KDE_circ} that while the noise is additive on the underlying data, its precise effect on the random persistence diagram is nonlinear.
		Moreover, Ex. \ref{S1} has a larger sample size (25 as opposed to 10), resulting in more consistent and smaller birth times for the main feature (e.g., the mode in Fig. \ref{fig_KDE_circ} is $b = 0.4 < 0.77$). 
		In addition, larger noise and sample size both result in more features near the diagonal in Ex. \ref{S1} as compared to Ex. \ref{ex2}.
		
	\end{example}

	\begin{example} \label{S2} \rm
		While Ex. \ref{S1} demonstrates the effect of noise on a persistence diagram pdf, this example will look into the effect of geometry.
		Consider random underlying datasets each consisting of 100 points sampled from a two-lobed polar curve, which are then perturbed by Gaussian noise with variance $(1/30)^2 I_2$, and their associated $\cech$ persistence diagrams for degree of homology $k=1$.
		An example dataset and its associated persistence diagram for $k=1$ are shown in Fig. \ref{fig_PD_UD_wob}.
		
		\begin{figure} 
			\begin{multicols}{2}
				\begin{center}
					\includegraphics[scale=0.5]{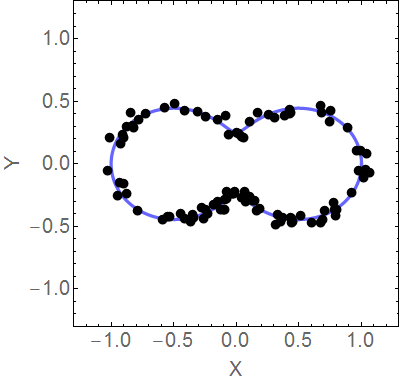} \\ (a) \\
					\includegraphics[scale=0.5]{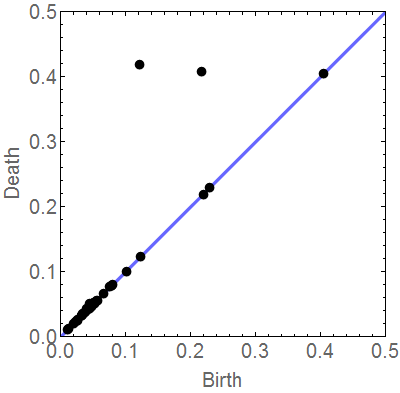} \\ (b)
				\end{center}
			\end{multicols}
			\caption{(a) An example of the underlying datasets generated for Ex. \ref{S1}.
					Each dataset consists of 100 points sampled uniformly (according to angle) on the two-lobed polar curve which are then perturbed by i.i.d. Gaussian noise with variance $(1/30)^2 I_2$.
					(b) The persistence diagram associated to the $\cech$ filtration of the underlying dataset.}
			\label{fig_PD_UD_wob}
		\end{figure}

		We consider several KDEs as we simultaneously increase the number of persistence diagrams and narrow the bandwidth.
		The bandwidth was chosen to vary according to Silverman's rule of thumb (Silverman, 1986).
		Since the KDEs are defined on $\bigcup_N W^N$ for several input cardinalities $N$, we present $\hat f_{n, \sigma}(Z)$ in multiple slices by fixing a cardinality and then fixing all but one input feature, as explained in Rmk. \ref{rmk_slices}.
		For example, $g(\xi) = \hat f_{n, \sigma}(\xi, \xi_2',...,\xi_N')$ for fixed $\xi_j'$ is a function on $W$ and represents a slice of the local KDE on $W^N$. 
		This progression of KDEs can be seen in Fig. \ref{fig_KDE_wob}, wherein the same slices are viewed for each choice of $n$ and $\sigma$.
		Modes of each slice are used as fixed features in the slices of higher cardinality inputs;
		consequently, the presented slices capture portions of the KDE with high probability density.
		Moreover, Fig. \ref{fig_KDE_wob} demonstrates that these slices tend to capture specific topological or geometric features of the underlying dataspace.
		
		\begin{figure}
			\begin{multicols}{4}
				\raisebox{19mm}{(1)}\includegraphics[scale = 0.25]{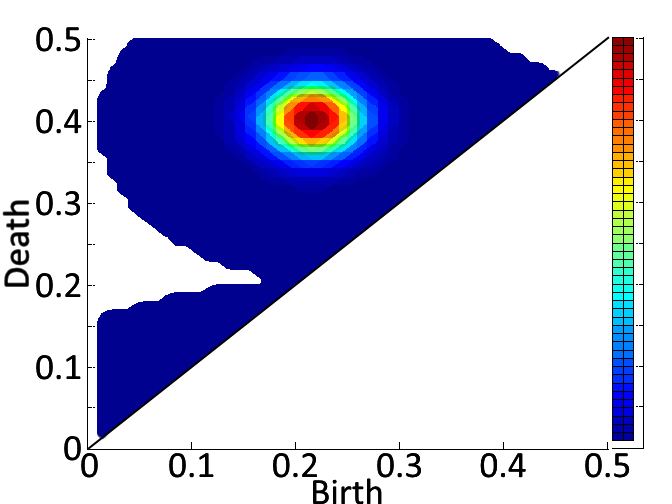}   \\
				\raisebox{19mm}{(2)}\includegraphics[scale = 0.25]{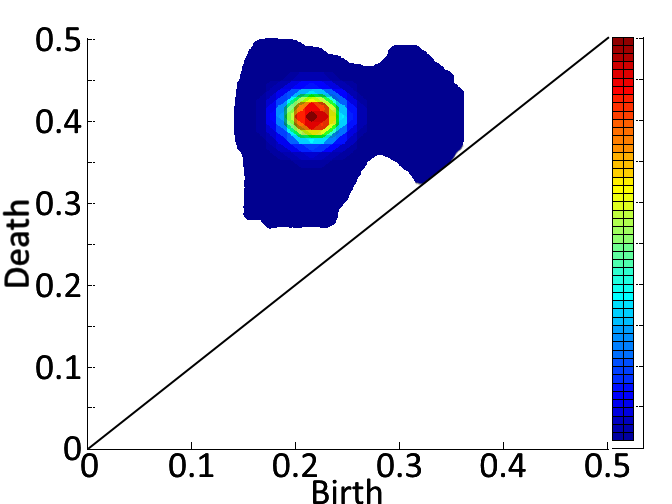}  \\
				\raisebox{19mm}{(3)}\includegraphics[scale = 0.20]{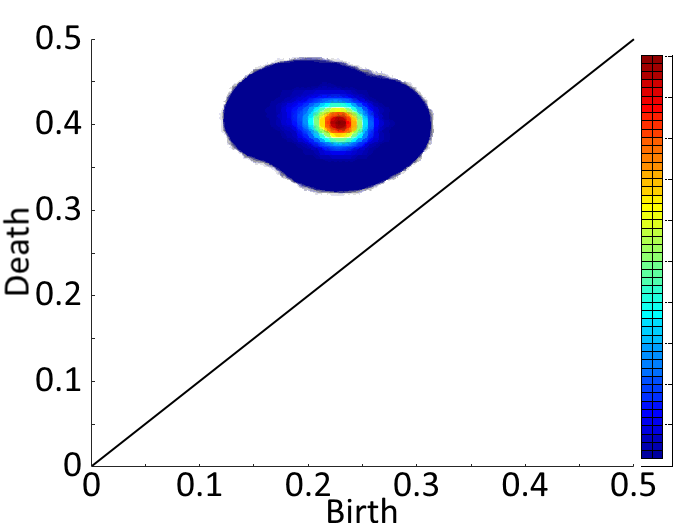}  \\
				\raisebox{19mm}{(4)}\includegraphics[scale = 0.20]{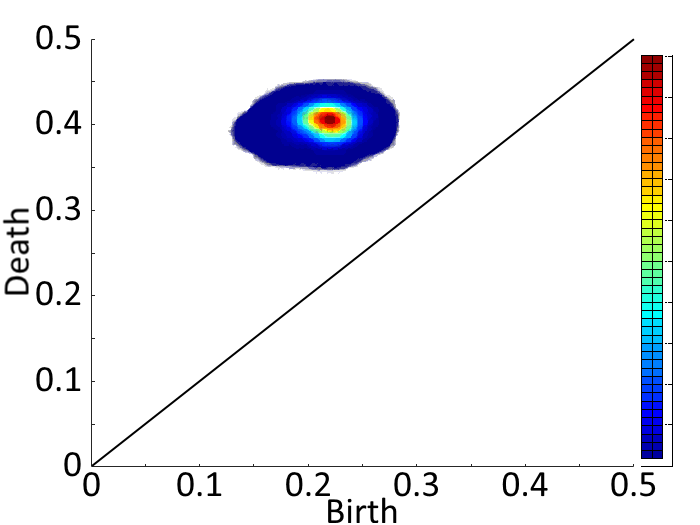}  \\
				\includegraphics[scale = 0.25]{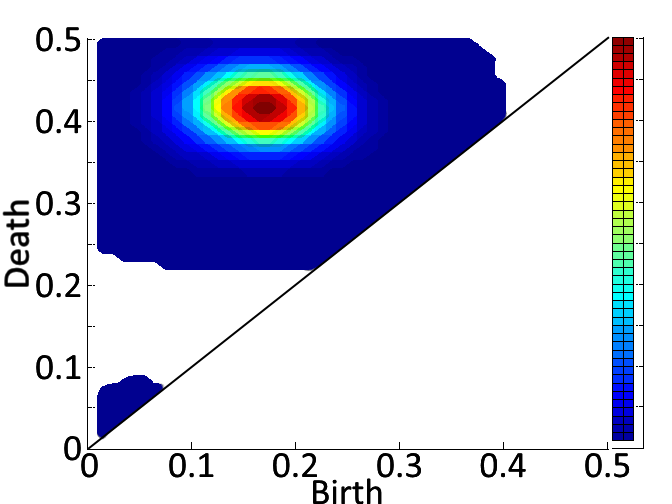}   \\
				\includegraphics[scale = 0.25]{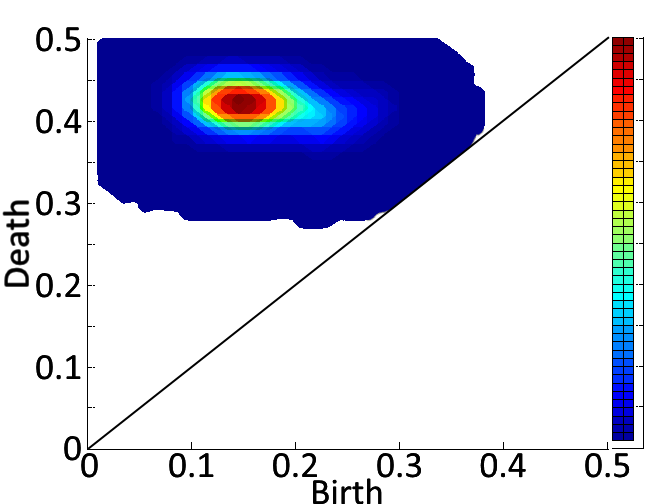}  \\
				\includegraphics[scale = 0.20]{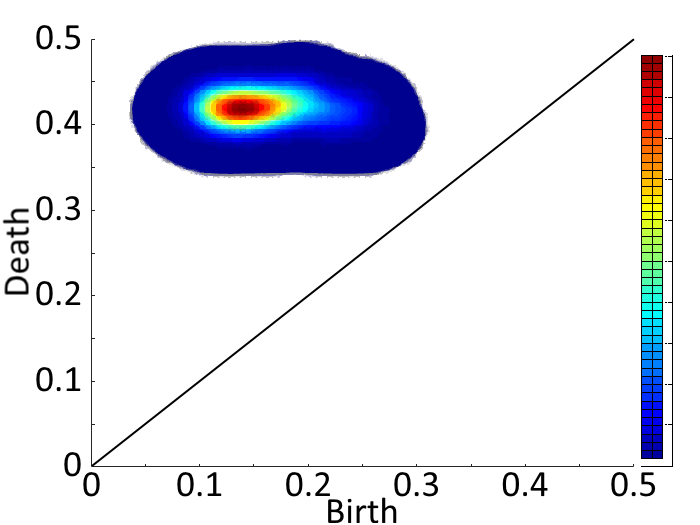}  \\
				\includegraphics[scale = 0.20]{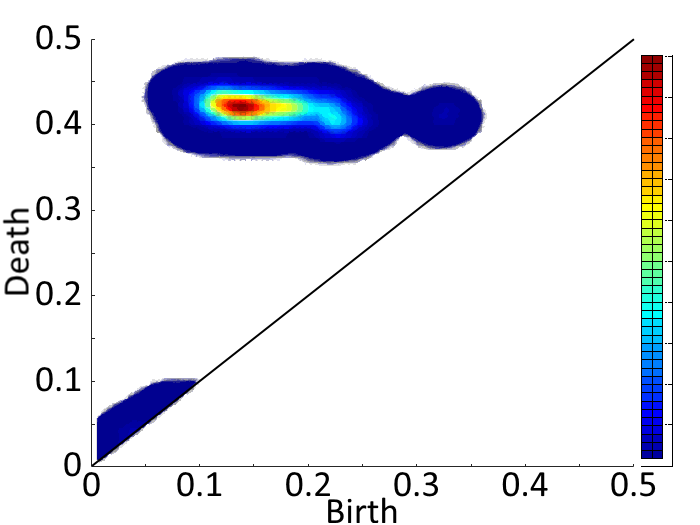}  \\
				\includegraphics[scale = 0.25]{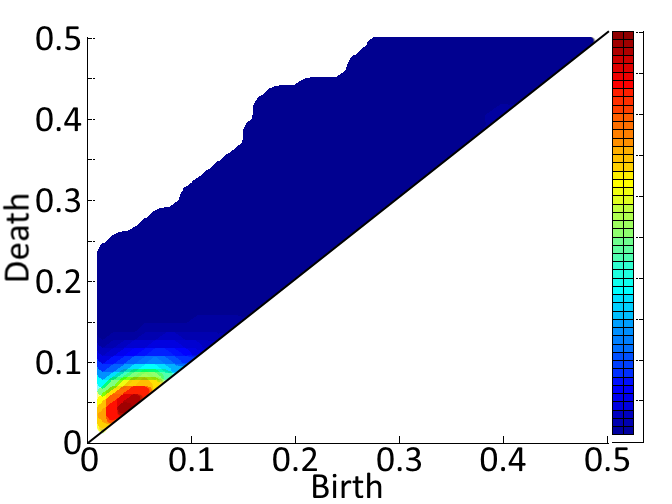}   \\
				\includegraphics[scale = 0.25]{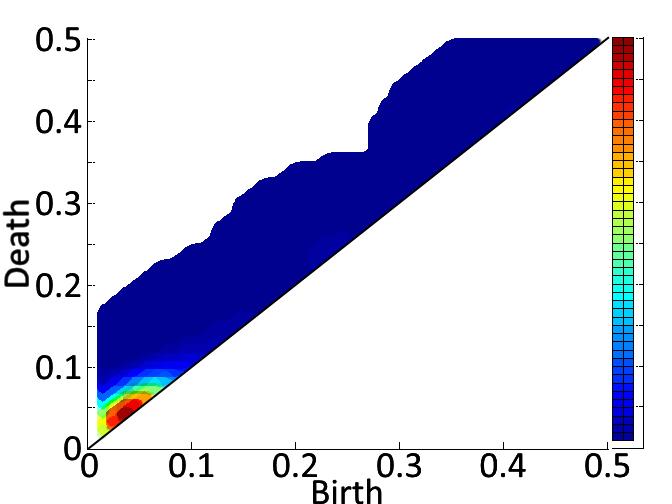}  \\
				\includegraphics[scale = 0.20]{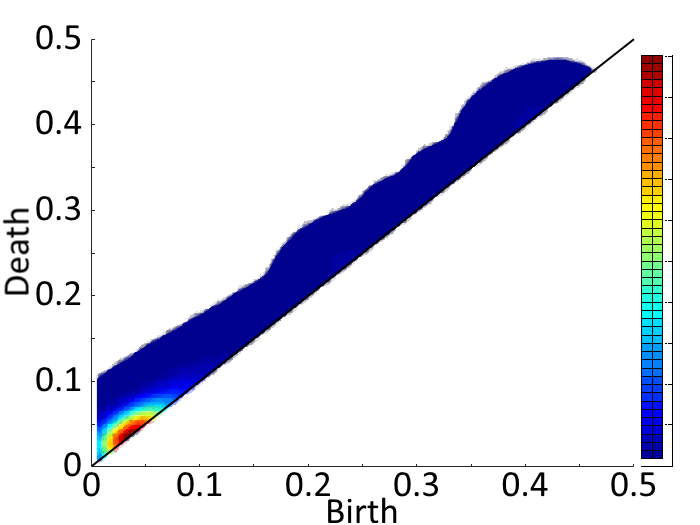}  \\
				\includegraphics[scale = 0.20]{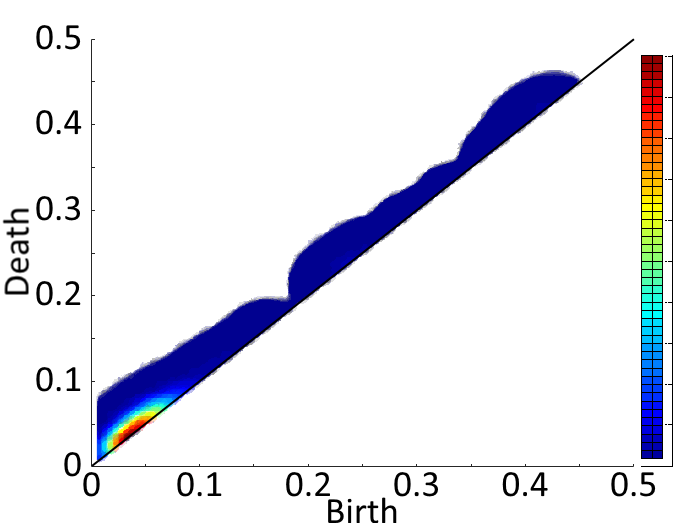}  \\
				\includegraphics[scale = 0.25]{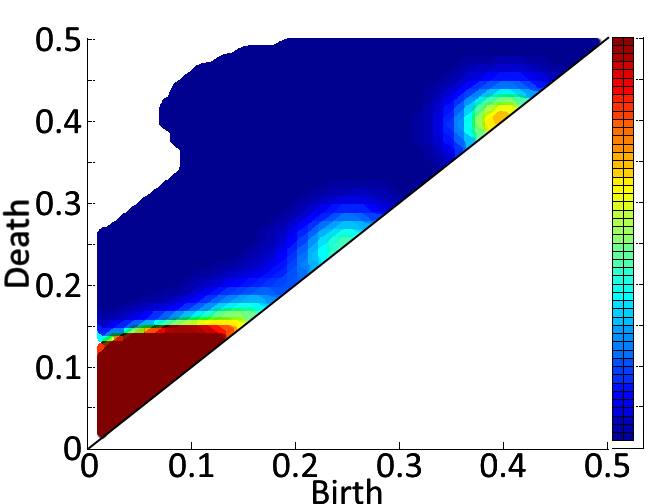}  \\
				\includegraphics[scale = 0.25]{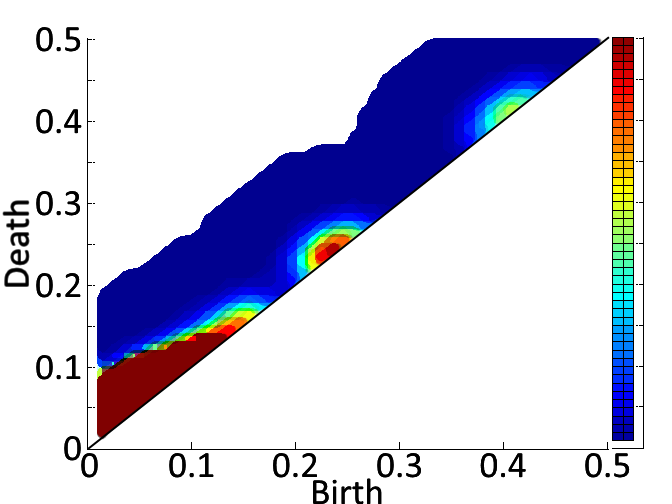} \\
				\includegraphics[scale = 0.20]{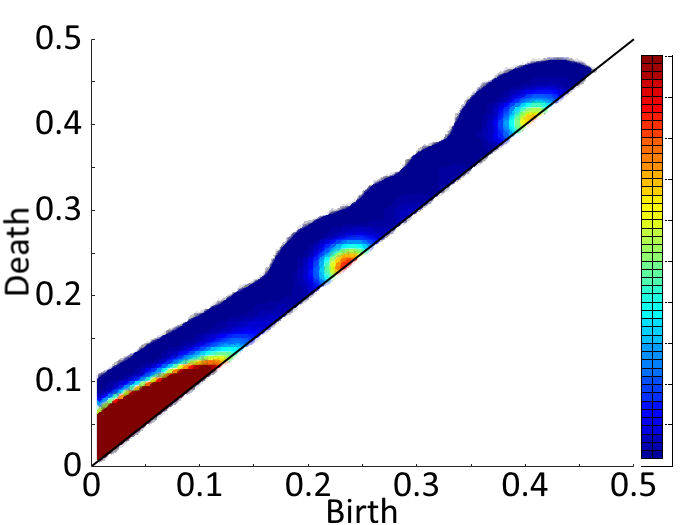} \\
				\includegraphics[scale = 0.20]{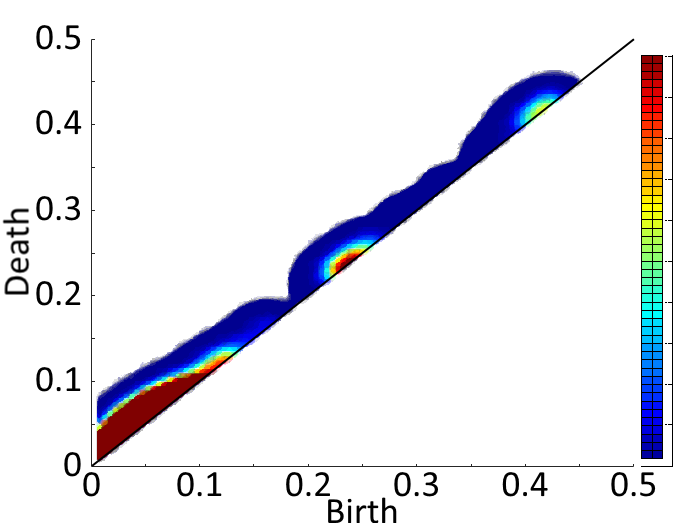}
			\end{multicols}
			\caption{Key slices of persistence diagram KDEs for Ex. \ref{S2}.
					Each column is a particular slice, while each row is a particular KDE (1) $n = 20$ and $\sigma = 0.03$, (2) $n = 100$ and $\sigma = 0.02$, (3) $n = 300$ and $\sigma = 0.015$, and (4) $n = 1000$ and $\sigma = 0.01$.
					The first column are the local KDEs $\hat f_{n,\sigma}((b,d))$ evaluated at a diagram with only one feature; the mode is $\xi_1' = (0.2,0.4)$.
					The second column are the local KDEs $\hat f_{n,\sigma}((b,d), (0.2,0.4))$ evaluated at a diagram with two features, but with one feature fixed; the mode is $\xi_2' = (0.14,0.42)$
					The third column are the local KDEs $\hat f_{n,\sigma}((b,d), (0.2,0.4), (0.14,0.42))$ evaluated at a diagram with three features, but with two features fixed.
					The fourth column shows the same slices as the third, but with the colormap shifted down to show the smaller modes.
					The variance of certain features effects the rate of convergence nearby, similar to Gaussian KDE in Euclidean space for a distribution with modes of different variance.}
			\label{fig_KDE_wob}
		\end{figure}
		
		The two-lobed curve in this example has a $\cech$ persistence diagram consisting of two features, a topological feature of very long persistence and a geometric feature of moderate persistence.
		The moderate persistence feature describes the pinching of the curve.
		These two features are captured as separate points by the KDEs, and are thus viewed in completely separate slices of the KDE.
		By observing the KDE in the last row of Fig. \ref{fig_KDE_wob}, the geometric feature with moderate persistence has considerably less variance.
		Indeed, while the birth time of the topological feature relies on bridging gaps around the entire shape, which can all vary, the larger birth time of the geometric feature has less variance since it relies solely only on the short circuit between the lobes. 
		As a result of this small variance, the geometric feature is emphasized for the local KDEs with a single input feature; also, the density takes longer to converge near this feature.
		
		The lower portion of the KDE shows three separate modes.
		Features which build the largest mode consists of small loops, caused by local noise and gaps along the curve.
		The two modes which appear at larger scale indicate short circuiting of the pinch (smaller) or one of the lobes (larger, like the second mode in the circle example);
		These two lower modes are separate from noise-based features and are indicative of geometry in the underlying data.
		
\end{example}

\end{document}